\theoremstyle{plain}
    \newtheorem{thm}{Theorem}[section]
    \newtheorem{example}[thm]{Example}
    \newtheorem{lemma}[thm]{Lemma}
    \newtheorem{proposition}[thm]{Proposition}
    \newtheorem{question}[thm]{Question}
    \newtheorem{theorem}[thm]{Theorem}
\theoremstyle{definition}
    \newtheorem{definition}[thm]{Definition}
    \newtheorem{remark}[thm]{Remark}
\theoremstyle{remark}
    \newtheorem{setup}[thm]{}
\newcommand{\Q}{\mathbb{Q}}
\newcommand{\R}{\mathbb{R}}
\newcommand{\Z}{\mathbb{Z}}
\newcommand{\Prep}{\operatorname{Prep}}
\newcommand{\alb}{\operatorname{alb}}
\newcommand{\Amp}{\operatorname{Amp}}
\newcommand{\id}{\operatorname{id}}
\newcommand{\Nef}{\operatorname{Nef}}
\newcommand{\NS}{\operatorname{NS}}
\newcommand{\PE}{\operatorname{PE}}
\newcommand{\Sing}{\operatorname{Sing}}
\newcommand{\SL}{\operatorname{SL}}
\newcommand{\Supp}{\operatorname{Supp}}
\newcommand{\Per}{\operatorname{Per}}
\newcommand{\N}{\operatorname{N}}
\newcommand{\Alb}{\operatorname{Alb}}
\begin{document}

\title[Amplified endomorphisms of normal projective varieties]
{Building blocks of amplified endomorphisms of normal projective varieties
}

\author{Sheng Meng}

\address
{
\textsc{Department of Mathematics} \endgraf
\textsc{National University of Singapore,
Singapore 119076, Republic of Singapore
}}
\email{ms@u.nus.edu}

\begin{abstract}
Let $X$ be a normal projective variety.
A surjective endomorphism $f:X\to X$ is int-amplified if $f^\ast L - L =H$  for some ample Cartier divisors $L$ and $H$.
This is a generalization of the so-called polarized endomorphism which requires that $f^*H\sim qH$ for some ample Cartier divisor $H$ and $q>1$.
We show that this generalization keeps all nice properties of the polarized case in terms of the singularity, canonical divisor, and equivariant minimal model program.
\end{abstract}

\subjclass[2010]{
14E30,   %Minimal model program (Mori theory, extremal rays)
%14H30, % Coverings, fundamental group
32H50, %iteration problem,
%11G10, %Abelian varieties of dimension > 1
%20K30 , %Automorphisms, homomorphisms, endomorphisms, etc.
08A35.  %Automorphisms, endomorphisms
%14J50, %Automorphisms of surfaces and higher-dimensional varieties
%32M05. %Complex Lie groups, automorphism groups acting on complex spaces
%11G10,  %Abelian varieties of dimension >1
%37B40 %Topological entropy
}

\keywords{amplified endomorphism, iteration, equivariant MMP, $Q$-abelian variety, Albanese morphism, Albanese map, MRC fibration}

\maketitle
\tableofcontents

\section{Introduction}
We work over an algebraically closed field $k$ which has characteristic zero.
%, and is uncountable (only used to guarantee the birational invariance of the rational connectedness property).

Let $f$ be a surjective endomorphism of a projective variety $X$. 
We say that $f$ is {\it polarized} if $f^*L\sim qL$ for some ample Cartier divisor $L$ and integer $q>1$.
We say
that $f$ is {\it int-amplified} if $H:=f^*L-L$ is ample for some ample Cartier divisor $L$.

We refer to \cite[\S2]{MZ} for the definitions and properties of the numerical equivalence ($\equiv$) of $\R$-Cartier divisors and the weak numerical equivalence ($\equiv_w$) of $r$-cycles with $\R$-coefficients.
Denote by $\N^1(X):=\NS(X) \otimes_{\Z} \mathbb{R}$ where $\NS(X)$ is the N\'eron-Severi group of $X$. 
Denote by $\N_r(X)$ the quotient vector space of $r$-cycles modulo the weak numerical equivalence.
Any surjective endomorphism $f$, via pullback, induces invertible linear maps on $\N^1(X)$ and $\N_r(X)$, denoted by $f^*|_{\N^1(X)}$ and $f^*|_{\N_r(X)}$.
We first give the following criterion for int-amplified endomorphisms. From this, one can easily see that it is very natural to define and study such kind of endomorphisms. We refer to \cite[Proposition 2.9]{MZ} for a criterion for polarized endomorphism. 
 
\begin{theorem}\label{main-thm-cri} Let $f:X\to X$ be a surjective endomorphism of a projective variety $X$.
Then the following are equivalent.
\begin{itemize}
\item[(1)] The endomorphism $f$ is int-amplified.
\item[(2)] All the eigenvalues of $\varphi:=f^*|_{\N^1(X)}$ are of modulus greater than $1$.
\item[(3)] There exists some big $\R$-Cartier divisor $B$ such that $f^*B-B$ is big.
\item[(4)] If $C$ is a $\varphi$-invariant convex cone in $\N^1(X)$,
then $\emptyset\neq(\varphi-\id_{\N^1(X)})^{-1}(C)\subseteq C$.
\end{itemize}
\end{theorem}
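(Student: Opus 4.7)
The plan is to prove the cyclic chain $(1) \Rightarrow (3) \Rightarrow (2) \Rightarrow (4) \Rightarrow (1)$. The implication $(1) \Rightarrow (3)$ is immediate, since ample classes are big: take $B = L$. For $(4) \Rightarrow (1)$, I apply condition (4) to $\Amp(X) \subseteq \N^{1}(X)$, which is a $\varphi$-invariant open convex cone because $f$ is finite surjective and pullback preserves ampleness. Condition (4) furnishes $L \in \Amp$ with $\varphi(L) - L \in \Amp$; openness of the ample cone together with density of $\NS(X)_{\Q}$ in $\N^{1}(X)$ lets me perturb $L$ to an integral ample class whose image under $\varphi - \id$ is still integral and ample, yielding an ample Cartier divisor realizing (1).

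For $(3) \Rightarrow (2)$, first iterate: $\varphi^{n}B - B = \sum_{k=0}^{n-1}\varphi^{k}(\varphi B - B)$ is a sum of big classes and hence big for all $n \geq 1$. A Perron--Frobenius argument on the dual cone of curves, pairing against a top left eigenvector of the adjoint $f_{\ast}$, first yields the spectral radius bound $\rho(\varphi) > 1$. The harder step is ruling out eigenvalues of modulus at most $1$: suppose such $\lambda$ exists. Since $\varphi$ preserves the lattice $\NS(X)$, its eigenvalues are algebraic integers, and Kronecker's theorem applied to the rational irreducible factor of the characteristic polynomial through $\lambda$ (combined with replacing $f$ by an iterate $f^{N}$) should reduce to the case where $1$ is an eigenvalue of $f_{\ast}$ on $\N_{1}(X)$. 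For any nonzero $\alpha \in \Ker(f_{\ast} - \id)$ one then has $H \cdot \alpha = L \cdot (f_{\ast} - \id)\alpha = 0$ for $H = \varphi L - L$; combining with the bigness of $H$ and the duality with $\NE(X)$ produces the desired contradiction.

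For $(2) \Rightarrow (4)$, condition (2) makes $\varphi - \id$ invertible and $\varphi^{-1}$ a strict contraction, so $(\varphi - \id)^{-1} = \sum_{k \geq 1}\varphi^{-k}$ converges absolutely. Given a $\varphi$-invariant convex cone $C$ and $v \in C$, set $u := (\varphi - \id)^{-1}(v)$, so that $\varphi u = u + v$ and hence $\varphi^{n}u - u = \sum_{k=0}^{n-1}\varphi^{k}v \in C$ by $\varphi$-invariance of $C$ and convexity. Writing $u = \varphi^{-n}u + \varphi^{-n}(\varphi^{n}u - u)$ and letting $n \to \infty$ (with $\varphi^{-n}u \to 0$) exhibits $u$ as a limit of elements in $\varphi^{-n}(C)$; a refined argument exploiting that $\varphi = f^{\ast}$ also preserves the ample and pseudo-effective cones then forces $u \in C$.

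The main obstacle is $(3) \Rightarrow (2)$: standard Perron--Frobenius controls only the spectral radius, whereas (2) demands a bound on every eigenvalue, and the corresponding pure linear-algebra statement is false (for example, a non-diagonalizable integer matrix with all eigenvalues of modulus greater than $1$ may preserve an open pointed convex cone while violating the analogue of (4)). The proof must therefore invoke geometric features of $\varphi = f^{\ast}$: integrality of the action on $\NS(X)$, Kronecker's theorem on algebraic integers of bounded modulus, and the intersection-pairing duality between $\N^{1}(X)$ and $\N_{1}(X)$.
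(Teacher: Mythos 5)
Your overall chain $(1)\Rightarrow(3)\Rightarrow(2)\Rightarrow(4)\Rightarrow(1)$ and the handling of $(1)\Rightarrow(3)$ and $(4)\Rightarrow(1)$ agree with the paper, but there are two genuine gaps, one in each of the nontrivial implications.

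For $(3)\Rightarrow(2)$, you claim that the ``pure linear-algebra statement is false'' and that one must invoke integrality of $\NS(X)$ and Kronecker's theorem. That claim is incorrect: the paper's Lemma~\ref{lem-eig>1} is exactly the pure linear-algebra statement (invertible $\varphi$ preserving a spanning, line-free convex cone $C$ and satisfying $\varphi(\ell)-\ell=h$ for $\ell,h\in C^\circ$ implies every eigenvalue has modulus $>1$), and it is true. The trick you are missing is to run Perron--Frobenius on $\varphi^{-1}$ rather than on $\varphi$: since $\varphi^{-1}(C)=C$, Birkhoff's theorem produces $v\in C\setminus\{0\}$ with $\varphi(v)=rv$ where $r$ is the \emph{smallest} modulus among the eigenvalues of $\varphi$; if $r\le 1$, sliding $\ell-av$ to the boundary $\partial C$ and computing $\varphi(\ell-av)-(\ell-av)=h+a(1-r)v\in C^\circ$ forces $\ell-av\in C^\circ$, a contradiction. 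Your alternative route via Kronecker has a real hole: from $|\lambda|\le 1$ you cannot conclude that all Galois conjugates of $\lambda$ have modulus $\le 1$ (the minimal polynomial can mix small and large roots, as the determinant is a nonzero integer), so Kronecker's theorem does not apply and the reduction to ``$1$ is an eigenvalue of $f_*$'' is unjustified.

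For $(2)\Rightarrow(4)$, your telescoping argument correctly shows $u=\lim_{n}\varphi^{-n}(\varphi^n u-u)$ with each $\varphi^{-n}(\varphi^n u-u)\in C$, but this only yields $u\in\overline{C}$, and (4) demands $u\in C$ (the cone $C$ need not be closed --- indeed for $(4)\Rightarrow(1)$ one applies it to the open cone $\Amp(X)$). You flag this with ``a refined argument \ldots forces $u\in C$,'' but that refinement is precisely the content of the paper's Proposition~\ref{prop-eig-inn} and is not automatic. The paper's fix is to pass to the subcone $E$ generated by $\{\varphi^{-i}(e)\}_{i\in\Z}$ inside the span $W$ of that orbit, show that the finitely generated truncations $E_m$ already span $W$, place the partial sums $s_m$ in the relative interior $\overline{E}^\circ=E^\circ$, and then observe that $u=s_m+(\text{a limit in }\overline{E})$ lies in $E^\circ\subseteq E\subseteq C$. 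Without this interior-point argument the implication does not close, so as written your proof of $(2)\Rightarrow(4)$ is incomplete.
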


\begin{remark}\label{main-rmk-1}
The approach towards Theorem \ref{main-thm-cri} is purely cone theoretical.
Therefore, it also applies to the action $f^*|_{\N_{n-1}(X)}$; see Theorem \ref{thm-cri2} for the precise argument.
\end{remark}

One advantage of studying int-amplified endomorphisms is that, with it, the category of polarized endomorphisms is largely extended to include taking the product.
Note that, in general, an int-amplified endomorphism may not split as a product of polarized endomorphisms; see Example \ref{exa-intamp}.
For the compositions of maps,
X. Yuan and S. Zhang asked the following question.
Unfortunately, it has a negative answer; see Example \ref{exa-comp}. 
However, we are able to show in Theorem \ref{main-thm-comp} that the composition of sufficient iterations of int-amplified endomorphisms is still int-amplified.
The proof essentially uses Theorem \ref{main-thm-cri}.

\begin{question}(cf.~\cite[Question 4.15]{YZ})\label{que-yz}
Let $f$ and $g$ be polarized endomorphisms of a projective variety $X$ such that $\Prep(f)=\Prep(g)$ where $\Prep$ is the set of preperiodic points.
Is $f\circ g$ polarized ?
\end{question}

\begin{theorem}\label{main-thm-comp}
Let $f$ and $g$ be surjective endomorphisms of a projective variety $X$.
Suppose $f$ is int-amplified.
Then $f^i\circ g$ and $g\circ f^i$ are int-amplified when $i\gg 1$.
\end{theorem}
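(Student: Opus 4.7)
The plan is to reduce to the spectral criterion (1) $\iff$ (2) of Theorem \ref{main-thm-cri}. Set $\varphi := f^*|_{\N^1(X)}$ and $\psi := g^*|_{\N^1(X)}$; both are invertible linear maps on the finite-dimensional space $\N^1(X)$ because $f$ and $g$ are surjective. Since $f$ is int-amplified, Theorem \ref{main-thm-cri} tells us that every eigenvalue of $\varphi$ has modulus greater than $1$, equivalently the spectral radius $\rho(\varphi^{-1}) < 1$. By Gelfand's formula this gives $\|\varphi^{-i}\| \to 0$ as $i \to \infty$ for any fixed norm on $\N^1(X)$.

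Next I would unwind the compositions: $(f^i \circ g)^*|_{\N^1(X)} = \psi \circ \varphi^i$ and $(g \circ f^i)^*|_{\N^1(X)} = \varphi^i \circ \psi$. As these two operators are conjugate via $\psi$, they share the same spectrum, so it suffices to show that every eigenvalue of $\psi \circ \varphi^i$ has modulus $>1$ once $i$ is large enough. Surjectivity of $f^i \circ g$ and $g \circ f^i$ is inherited from $f$ and $g$, so the implication (2) $\Rightarrow$ (1) of Theorem \ref{main-thm-cri} will then deliver int-amplification for both compositions.

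The core estimate is short: if $\mu$ is an eigenvalue of $\psi \circ \varphi^i$ with eigenvector $v \neq 0$, then $\psi \varphi^i v = \mu v$ rearranges to $\varphi^{-i} \psi^{-1} v = \mu^{-1} v$, hence
\[|\mu|^{-1} \leq \|\varphi^{-i}\| \cdot \|\psi^{-1}\|.\]
The quantity $\|\psi^{-1}\|$ is a constant depending only on $g$, whereas $\|\varphi^{-i}\| \to 0$, so for $i \gg 1$ the right-hand side drops strictly below $1$ and forces $|\mu| > 1$. There is no real obstacle once Theorem \ref{main-thm-cri} is in hand; the only points to track carefully are the order of composition when taking pullbacks and the use of surjectivity of $g$ to guarantee that $\psi$ is invertible, so that $\|\psi^{-1}\|$ is finite.
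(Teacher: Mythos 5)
Your proof is correct and follows the same overall strategy as the paper's: both reduce to the spectral criterion of Theorem~\ref{main-thm-cri} and use Gelfand's formula to get $\|\varphi^{-i}\|\to 0$. The one place you diverge is the final estimate. The paper picks a specific eigenvector: it applies Perron--Frobenius to $\varphi_h^{-1}$ (where $\varphi_h:=(f^i\circ g)^*|_{\N^1(X)}$) to produce a real vector $v$ with $\varphi_h(v)=rv$, where $1/r$ is the spectral radius of $\varphi_h^{-1}$, and then bounds $r\|v\|=\|\varphi_g\varphi_f^i(v)\|>\|v\|$. You instead bound the modulus of an \emph{arbitrary} eigenvalue $\mu$ of $\psi\circ\varphi^i$ directly by $|\mu|^{-1}\le\rho(\varphi^{-i}\psi^{-1})\le\|\varphi^{-i}\|\,\|\psi^{-1}\|$, which avoids Perron--Frobenius altogether and is slightly more elementary. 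You also dispatch $g\circ f^i$ cleanly by observing that $\psi\circ\varphi^i$ and $\varphi^i\circ\psi$ are conjugate via $\psi$, whereas the paper simply says the same argument applies. Both routes are valid; yours sheds an unneeded tool, while the paper's is stylistically consistent with how Perron--Frobenius is used elsewhere (e.g.\ in Lemma~\ref{lem-eig>1}).
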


In the rest of this paper, we focus on showing that int-amplified endomorphisms keep all the nice properties of polarized endomorphisms concerning the canonical divisor, singularity and equivariant minimal model program (MMP).
The main technique required for this generalization is in Section \ref{sec-prop} which applies the intersection theory.
We refer to \cite{MZ} and \cite{CMZ} for the details about the polarized case.

Let $f:X\to X$ be a surjective endomorphism of a normal projective variety $X$.
When $f$ is polarized and $X$ is smooth, Boucksom, de Fernex and Favre \cite[Theorem C]{BFF} showed that $-K_X$ is pseudo-effective.
Cascini, Zhang and the author \cite[Theorem 1.1 and Remark 3.2]{CMZ} used a different method to
show further that $-K_X$ is weakly numerically equivalent to an effective Weil $\Q$-divisor without the assumption of $X$ being smooth.
Now applying Theorem \ref{main-thm-cri} to the ramification divisor formula $K_X=f^*K_X+R_f$ where $R_f$ is the ramification divisor for $f$,
this result can be easily generalized to the int-amplified case.

\begin{theorem}\label{main-thm-k} Let $X$ be a normal projective variety admitting an int-amplified endomorphism.
Then $-K_X$ is weakly numerically equivalent to some effective Weil $\Q$-divisor.
If $X$ is further assumed to be $\Q$-Gorenstein, then $-K_X$ is numerically equivalent to some effective $\Q$-Cartier divisor.
\end{theorem}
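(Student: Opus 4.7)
The plan is to combine the ramification divisor formula with the cone criterion of Theorem \ref{main-thm-cri}(4), extended to the action on $\N_{n-1}(X)$ via Theorem \ref{thm-cri2}. First I would set $\varphi := f^\ast|_{\N_{n-1}(X)}$ and invoke the ramification formula
\[
K_X = f^\ast K_X + R_f,
\]
valid as Weil divisors on the normal variety $X$, with $R_f \geq 0$ the ramification divisor (defined on the smooth locus and extended since $X$ is normal). Passing to classes in $\N_{n-1}(X)$ rewrites this as $(\varphi - \id)[-K_X] = [R_f]$. Since $f$ is int-amplified, Theorem \ref{main-thm-cri} together with its $\N_{n-1}$-analog (Remark \ref{main-rmk-1}, Theorem \ref{thm-cri2}) tells me that every eigenvalue of $\varphi$ has modulus greater than $1$, so $\varphi - \id$ is invertible, and
\[
[-K_X] = (\varphi - \id)^{-1}[R_f].
\]

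Next I would consider the convex cone $C \subseteq \N_{n-1}(X)$ spanned by the classes of effective Weil divisors on $X$. This cone is convex by construction, and it is $\varphi$-invariant, since the pullback of an effective Weil divisor by a surjective morphism is again effective. The $\N_{n-1}$-analog of Theorem \ref{main-thm-cri}(4), applied to $C$, then gives $(\varphi - \id)^{-1}(C) \subseteq C$, so $[-K_X] \in C$. Because $[R_f]$ is integral and $\varphi$ preserves the $\Q$-structure of $\N_{n-1}(X)$, the class $[-K_X]$ is itself rational; a Carath\'eodory-type argument (writing $[-K_X]$ as a finite non-negative real combination of integral prime divisor classes and rationalizing the coefficients) then produces an effective Weil $\Q$-divisor $D$ with $[D] = [-K_X]$, i.e., $-K_X \equiv_w D$.

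For the $\Q$-Gorenstein statement I would run the same argument inside $\N^1(X)$. Here $K_X$ is $\Q$-Cartier, so $R_f = K_X - f^\ast K_X$ is $\Q$-Cartier and effective. The identical computation gives $[-K_X] = (\varphi - \id)^{-1}[R_f]$ now in $\N^1(X)$, and I would apply Theorem \ref{main-thm-cri}(4) directly to the convex $\varphi$-invariant cone of classes of effective $\Q$-Cartier divisors to obtain an effective $\Q$-Cartier divisor numerically equivalent to $-K_X$.

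The step I expect to be the main obstacle is verifying that the cone criterion (4) can be applied to the cone of effective classes, which is a priori not closed in $\N_{n-1}(X)$ (its closure being the pseudo-effective cone); anything weaker would only yield that $-K_X$ is pseudo-effective rather than actually weakly numerically equivalent to an effective divisor. Fortunately Theorem \ref{main-thm-cri}(4) is stated for arbitrary $\varphi$-invariant convex cones, not just closed ones, so this is taken care of at the source, and what remains is the short rationality upgrade noted above.
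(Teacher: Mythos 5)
Your proof is correct and takes essentially the same route as the paper: pass via the ramification divisor formula to $(\varphi-\id)[-K_X]=[R_f]$ with $\varphi=f^*|_{\N_{n-1}(X)}$, and apply Theorem \ref{thm-cri2}(4) to the (not necessarily closed) $\varphi$-invariant cone of effective Weil $\R$-divisor classes. Your additional remark on rationalizing the coefficients via a Carath\'eodory-type argument is a detail the paper elides, and your observation that criterion (4) is stated for arbitrary convex cones (not just closed ones) is exactly what makes the effectivity (rather than mere pseudo-effectivity) conclusion possible.
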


We refer to \cite[Chapters 2 and 5]{KM} for the definitions and the properties of log canonical (lc), Kawamata log terminal (klt), canonical and terminal singularities.
Let $f:X\to X$ be a non-isomorphic surjective endomorphism of a normal projective variety $X$.
Wahl \cite[Theorem 2.8]{Wa} showed that
$X$ has at worst lc singularities when $\dim(X)=2$.
Broustet and H\"oring \cite[Corollary 1.5]{BH} generalized this result to the higher dimensional case with additional assumptions that $f$ is polarized and $X$ is $\Q$-Gorenstein.
We generalize their result to the int-amplified case in the following.

\begin{theorem}\label{main-thm-lc} Let $X$ be a $\Q$-Gorenstein normal projective variety admitting an int-amplified endomorphism.
Then $X$ has at worst lc singularities.
\end{theorem}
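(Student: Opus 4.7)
The plan is to generalize the approach of Broustet--H\"oring \cite{BH} for polarized endomorphisms to the int-amplified setting. The three main inputs will be: (i) Fujino's vanishing theorem for the non-lc ideal sheaf, (ii) the anticanonical effectivity provided by Theorem \ref{main-thm-k}, and (iii) the spectral criterion Theorem \ref{main-thm-cri}(2), which characterizes int-amplified endomorphisms by the requirement that all eigenvalues of $f^*|_{\N^1(X)}$ have modulus greater than $1$.

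First I would argue by contradiction: suppose $X$ is not lc, so the non-lc locus $W:=\Nlc(X)$ is a nonempty proper closed subscheme with defining ideal sheaf $\SK\subsetneq\SO_X$. Since $X$ is $\Q$-Gorenstein and $f$ is a finite surjection between normal varieties (finiteness follows from Theorem \ref{main-thm-cri}), the ramification divisor formula $K_X=f^*K_X+R_f$ holds with $R_f$ effective. Comparing log discrepancies via this formula forces $f^{-1}(W)\subseteq W$. Because $W$ has only finitely many irreducible components, after replacing $f$ by a sufficiently large iterate I would arrange $f^{-1}(W)=W$ set-theoretically, and then (by a further iterate) scheme-theoretically on the level of the ideal $\SK$.

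Next I would invoke Fujino's vanishing for the non-lc ideal: for any ample Cartier divisor $A$ with $K_X+A$ ample,
\[
H^i\bigl(X, \SK\otimes\SO_X(K_X+A)\bigr)=0\quad\text{for }i>0,
\]
which produces a surjection
\[
H^0(X,\SO_X(K_X+A))\twoheadrightarrow H^0(W,\SO_W((K_X+A)|_W)).
\]
Replacing $A$ by $(f^n)^*A$ and using Theorem \ref{main-thm-k} to guarantee that $K_X+(f^n)^*A$ remains ample for $n\gg 0$, one obtains a family of such surjections whose asymptotic behavior in $n$ is governed by the spectrum of $f^*$ on $\N^1(X)$ and on $\N^1$ of the invariant components of $W$. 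In the polarized case, where $f^*L\sim qL$, Broustet--H\"oring extract a contradiction from the comparison of the growth rates $q^{n\dim X}$ versus $q^{n\dim W}$ combined with the structure of $f$-invariant non-klt centers.

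The main obstacle, which constitutes the new content, is to replace the single polarization eigenvalue $q>1$ by the weaker spectral condition that every eigenvalue of $f^*|_{\N^1(X)}$ has modulus $>1$. The difficulty is twofold: first, the eigenvalues need not be real, equal, or even diagonalizable, so the growth of $(f^n)^*A$ must be measured by Jordan-block estimates that give exponential expansion with only polynomial corrections; second, the restriction $f^*|_{\N^1(W)}$ carries its own spectral data, which must be related to that of $f^*|_{\N^1(X)}$ to make Broustet--H\"oring's dimensional comparison go through. I would handle both issues by passing to a sufficient iterate of $f$ so that the smallest modulus eigenvalue dominates the polynomial factors, reducing the problem to the strict inequality $\dim W<\dim X$ exactly as in the polarized case. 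This reduction is precisely what the cone-theoretic criterion of Theorem \ref{main-thm-cri} makes available, and it is where the \emph{int-amplified} hypothesis enters in a genuinely new way beyond the polarized setting.
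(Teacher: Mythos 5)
Your proposal sets out to re-prove the core of Broustet--H\"oring's argument (Fujino vanishing for the non-lc ideal sheaf, invariance of the non-lc locus, and a growth-rate comparison) with the polarization $f^*L\sim qL$ replaced by the int-amplified spectral condition. That is substantially more work than the paper does, and it is unclear the asymptotic estimate would actually close: the two growth rates you want to compare live in $\N^1(X)$ and in $\N^1$ of an invariant non-lc center, and in the int-amplified setting these two pullback actions are not tied to a single eigenvalue $q$, so the ``$q^{n\dim X}$ versus $q^{n\dim W}$'' comparison that powers the polarized proof does not transfer in any obvious way. Passing to a large iterate controls the Jordan-block polynomial corrections on $X$ but does not by itself produce two comparable exponential rates. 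You also assert that $f^{-1}$-invariance of the non-lc \emph{ideal} can be arranged scheme-theoretically after iterating; this is not automatic and is left unjustified.

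The paper avoids all of this by noticing that \cite[Theorem 1.2]{BH} is a purely structural statement valid for \emph{any} finite surjective endomorphism of degree $>1$ on a $\Q$-Gorenstein normal projective variety --- it requires no polarization. Since $\deg f>1$ for an int-amplified $f$ (Lemma~\ref{lem-deg>1}), that theorem immediately produces an irreducible component $Z$ of the non-lc locus with $f^{-1}(Z)=Z$ and $\deg(f|_Z)=\deg f$. The genuinely new ingredient is Lemma~\ref{lem-inv-deg}: for an int-amplified $f$, a \emph{proper} $f^{-1}$-invariant closed subvariety cannot have $\deg(f|_Z)=\deg f$. This is proved by the intersection-theoretic limit $\lim_{i\to\infty} Z\cdot (f^i)^*(A^{\dim Z})/(\deg f)^i=0$ (Lemmas~\ref{lem-weak-e} and~\ref{lem-lim-int}), which forces $Z\cdot A^{\dim Z}=0$, absurd. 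So the missing idea in your proposal is precisely this: rather than re-deriving the vanishing-theorem machinery, use the spectral criterion to establish the eigenvalue bound on $\N^k_{\mathbb{C}}(X)$ and deduce that no proper invariant subvariety can carry the full degree of $f$.
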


Let $f:X\to X$ be a surjective endomorphism of a normal projective variety $X$. 
We consider typical $f$-equivariant morphisms; see
\cite[\S4]{Na10} or Definition \ref{def-smrc} for the special maximal rationally connected (MRC) fibration, and see
also \cite[Remark 9.5.25]{FGI}, \cite[Chapter II.3]{Lang} and \cite[\S5]{CMZ} for the Albanese morphism and the Albanese map (cf.~Section \ref{sec-alb}).

The result below is a generalization of \cite[Proposition 1.6]{MZ}.
A normal projective variety $X$ is said to be $Q$-abelian if there is a finite surjective morphism $\pi:A\to X$ \'etale in codimension $1$ with $A$ being an abelian variety.
\begin{theorem}\label{main-thm-mrc} Let $f:X\to X$ be an int-amplified endomorphism of a normal projective variety $X$.
Then there is a special MRC fibration $\pi:X\dashrightarrow Y$ in the sense of Nakayama \cite{Na10}
(which is the identity map when $X$ is non-uniruled)
together with a (well-defined) surjective endomorphism $g$ of $Y$, such that the following are true.
\begin{itemize}
\item[(1)]
$g\circ \pi=\pi\circ f$; $g$ is int-amplified.
%$(\deg g)^{\dim(X)} = (\deg f)^{\dim(Y)}$.
\item[(2)]
$Y$ is a $Q$-abelian variety with only canonical singularities. 
%Hence there is a finite surjective morphism $T \to Y$ \'etale in codimension one with $T$ an abelian variety and $g$ lifts to a polarized endomorphism $g_T$ of $T$.
\item[(3)]
Let $\bar{\Gamma}_{X/Y}$ be the normalization of the graph of $\pi$. Then the induced morphism $\bar{\Gamma}_{X/Y}\to Y$ is equi-dimensional
with each fibre (irreducible) rationally connected.
\item[(4)]
If $X$ has only klt singularities, then $\pi$ is a morphism.
\end{itemize}
\end{theorem}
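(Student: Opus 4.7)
The plan is to parallel the proof of the polarized case \cite[Proposition 1.6]{MZ}, using Theorem \ref{main-thm-cri} as the main cone-theoretic device to transfer int-amplifiedness from $X$ to its $f$-equivariant quotient. First I would let $\pi:X\dashrightarrow Y$ be Nakayama's special MRC fibration. Since its construction is canonical under dominant rational self-maps, $f$ descends to a uniquely determined surjective endomorphism $g:Y\to Y$ with $g\circ\pi=\pi\circ f$, and $Y$ is non-uniruled by construction (with $\pi=\id$ when $X$ itself is non-uniruled). Property (3) is then immediate from Nakayama's definition of ``special'': equi-dimensionality of $\bar{\Gamma}_{X/Y}\to Y$ and rational connectedness of its geometric fibres are built into the construction.

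To verify that $g$ is int-amplified I would use the eigenvalue criterion of Theorem \ref{main-thm-cri}(2). With projections $p:\bar{\Gamma}_{X/Y}\to X$ (birational) and $q:\bar{\Gamma}_{X/Y}\to Y$ (surjective), define $\pi^{\ast}:=p_{\ast}\circ q^{\ast}:\N^1(Y)\to \N^1(X)$; this is injective (since any $0\neq\alpha\in\N^1(Y)$ pairs nontrivially with some curve $C_Y\subset Y$, and any irreducible component of $q^{-1}(C_Y)$ which dominates $C_Y$ then witnesses $\pi^{\ast}\alpha\neq 0$ on $X$), and after passing to an $f$-equivariant resolution of indeterminacy the identity $g\circ\pi=\pi\circ f$ yields $\pi^{\ast}\circ g^{\ast}=f^{\ast}\circ\pi^{\ast}$. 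Consequently the spectrum of $g^{\ast}|_{\N^1(Y)}$ embeds into that of $f^{\ast}|_{\N^1(X)}$; since every eigenvalue of the latter has modulus greater than $1$ by Theorem \ref{main-thm-cri}(2) applied to $f$, the same holds for $g$, proving the int-amplifiedness part of (1).

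For part (2), $Y$ is a non-uniruled normal projective variety carrying an int-amplified endomorphism $g$, so Theorem \ref{main-thm-k} gives $-K_Y\equiv_w D$ for some effective Weil $\Q$-divisor $D$; non-uniruledness forces $K_Y$ to be pseudo-effective, hence $K_Y\equiv_w 0$ and $D=0$. The remaining step---identifying $Y$ as a $Q$-abelian variety with only canonical singularities---is the step I expect to be the main obstacle, as it amounts to transferring the $K_Y$-trivial classification of \cite[Proposition 1.6]{MZ} from the polarized to the int-amplified setting. The natural approach is to first upgrade the lc conclusion of Theorem \ref{main-thm-lc} to canonical singularities by using triviality of $K_Y$ together with the spreading-out of $g$ to exclude non-canonical centres, then pass to the Albanese and invoke an endomorphism-rigidity argument in the spirit of \cite{MZ} to identify $Y$ with a quotient of an abelian variety by a finite group acting freely in codimension one.

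For part (4), assuming in addition that $X$ has only klt singularities, the structure already established---$Q$-abelian base $Y$ with canonical singularities, equi-dimensional graph projection, and canonical construction of $\pi$---allows a relative MMP and base-change argument along the lines of the klt descent in \cite{CMZ} to promote the rational map $\pi$ to a morphism, completing the proof.
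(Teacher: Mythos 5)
Your treatment of part (3) contains a genuine error. You write that property (3) is ``immediate from Nakayama's definition of special,'' but the definition of a special MRC fibration only guarantees that $p_2:\Gamma\to Y$ is equi-dimensional and that a \emph{general} fibre is rationally connected. The theorem asserts much more: that \emph{every} fibre of $\bar{\Gamma}_{X/Y}\to Y$ is irreducible and rationally connected. Upgrading from ``general'' to ``all'' is not automatic and is exactly where the dynamics enter. The paper proves this via Lemma~\ref{fibres-rc+irr}, whose mechanism is that the loci $\Sigma_1$ (reducible fibre), $\Sigma_2$ (jumping dimension), $\Sigma_3$ (non-RC fibre) are $g^{-1}$-invariant closed subsets of the $Q$-abelian base $Y$, and $Q$-abelian varieties admit no proper $f^{-1}$-periodic closed subsets by Lemma~\ref{lem-inv-qabe}. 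This argument requires (2) to have been established first, so the logical order in the paper ((2) before (3)) matters and your proposal inverts it.

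For part (2), you explicitly flag the identification of $Y$ as $Q$-abelian with canonical singularities as ``the main obstacle'' and gesture at an Albanese-plus-rigidity strategy, but this is not the route the paper takes and you have not carried it out. The paper reduces via Lemma~\ref{lem-nu-kx0} to $K_Y\sim_\Q 0$ with canonical singularities (using the $\sigma$-decomposition and $\kappa_\sigma=0$, not a case analysis on lc centres), then applies Theorem~\ref{thm-kx-pe-qa}, which runs a tower of Galois closures of the iterates $f^k$ (Lemma~\ref{lem-nak-gkp}), uses the limit formula of Lemma~\ref{lem-lim-int} to force $\operatorname{Sing}(V_k)=\emptyset$ and $c_2(V_k)\cdot A_k^{n-2}=0$ for $k\gg 1$, and then invokes Yau/Beauville to conclude $V_k$ (hence $Y$) is $Q$-abelian. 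Nothing in your sketch touches this Chern-class computation, and it is unclear the Albanese route closes the gap, since the Albanese morphism by itself does not detect the difference between a $Q$-abelian variety and, say, a Calabi--Yau with $q=0$. Parts (1) and (4) of your proposal are on track: (1) is essentially Lemma~\ref{lem-int-des1} via the spectral criterion of Theorem~\ref{main-thm-cri}, and (4) is the rigidity Lemma~\ref{mor-q-abelian}, though in the paper both (3) and (4) are downstream of (2) and cannot be proved independently of it.
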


The following result answers Krieger - Reschke \cite[Question 1.10]{Kr} when $f$ is int-amplified. For the polarized case (especially in arbitrary characteristic), see \cite[Corollary  1.4]{MZ} and \cite[Theorem 1.2]{CMZ}.
%See Definition \ref{def-alb} for the notion of Albanese morphism and Albanese map.
\begin{theorem}\label{main-thm-alb} Let $f:X\to X$ be an int-amplified endomorphism of a normal projective variety $X$. Then we have the following.
\begin{itemize}
\item[(1)]
The Albanese morphism $\alb_X:X\to \Alb(X)$ is surjective with $(\alb_X)_\ast \mathcal{O}_X=\mathcal{O}_{\Alb(X)}$ and all the fibres of $\alb_X$ are irreducible and equi-dimensional. The induced morphism $g:\Alb(X)\to \Alb(X)$ is int-amplified.
\item[(2)]
The Albanese map  $\mathfrak{alb}_X:X\dashrightarrow \mathfrak{Alb}(X)$ is dominant and the induced morphism $h:\mathfrak{Alb}(X)\to \mathfrak{Alb}(X)$ is int-amplified.
\end{itemize}
\end{theorem}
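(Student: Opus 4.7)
The strategy is to bootstrap from Theorem \ref{main-thm-mrc} together with the functoriality of the Albanese. For normal projective $X$, the Albanese morphism $\alb_X:X\to \Alb(X)$ exists (via a resolution), and by functoriality $f$ induces $g:\Alb(X)\to\Alb(X)$ with $g\circ \alb_X=\alb_X\circ f$. Apply Theorem \ref{main-thm-mrc} to $f$ to obtain the $f$-equivariant special MRC fibration $\pi:X\dashrightarrow Y$, with $Y$ a $Q$-abelian variety of canonical singularities, the induced $g_Y:Y\to Y$ int-amplified, and $\bar{\Gamma}_{X/Y}\to Y$ equi-dimensional with irreducible rationally connected fibres. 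Since any morphism from a rationally connected variety to an abelian variety is constant, $\alb_X$ (composed with the birational $\bar{\Gamma}_{X/Y}\to X$) contracts the fibres of $\bar{\Gamma}_{X/Y}\to Y$ and hence factors through $\bar{\Gamma}_{X/Y}\to Y$ to give a morphism $\alpha:Y\to\Alb(X)$, equivariant with respect to $g_Y$ and $g$, satisfying $\alpha\circ\pi=\alb_X$ on the domain of $\pi$.

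For part (1), let $\sigma:A\to Y$ be the finite surjective morphism from an abelian variety $A$, \'etale in codimension one, witnessing that $Y$ is $Q$-abelian. Then $\alpha\circ\sigma:A\to\Alb(X)$ factors through $\alb_A=\id_A$, so $\Alb(X)=A/K$ is a quotient abelian variety of $A$; by the universal property of $\alb_Y$ we identify $\alpha=\alb_Y$ and $\Alb(Y)=\Alb(X)$. The fibres of $\alb_Y$ are then images under $\sigma$ of translates of the abelian subvariety $K\subset A$, hence irreducible and equi-dimensional. Combined with the irreducible, equi-dimensional fibres of $\bar{\Gamma}_{X/Y}\to Y$ given by Theorem \ref{main-thm-mrc}(3), and using that $\bar{\Gamma}_{X/Y}\to X$ is surjective, the fibres of $\alb_X$ are seen to be irreducible and equi-dimensional. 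Surjectivity of $\alb_X$ is immediate, and $(\alb_X)_\ast\mathcal{O}_X=\mathcal{O}_{\Alb(X)}$ then follows from connectedness of the fibres via Stein factorization together with normality. Finally, $g$ is int-amplified by Theorem \ref{main-thm-cri}: pulling back classes in $\N^1(\Alb(X))$ along $\alb_X$ intertwines $g^\ast$ with $f^\ast$, so every eigenvalue of $g^\ast|_{\N^1(\Alb(X))}$ appears among the eigenvalues of $f^\ast|_{\N^1(X)}$, all of which have modulus $>1$.

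For part (2), $\mathfrak{alb}_X:X\dashrightarrow\mathfrak{Alb}(X)$ is defined via $H^0(\tilde{X},\Omega^1_{\tilde{X}})$ on a smooth resolution $\tilde{X}\to X$, and there is a natural surjection $\mathfrak{Alb}(X)\twoheadrightarrow\Alb(X)$ equivariant under the induced $h:\mathfrak{Alb}(X)\to\mathfrak{Alb}(X)$ and the $g$ from part (1). Dominance of $\mathfrak{alb}_X$ follows from surjectivity of $\alb_X$ via this descent, and int-amplification of $h$ follows from the same pull-back eigenvalue argument as above, now carried out through $\mathfrak{alb}_X$ (via $\tilde{X}$), combined with int-amplification of $f$ and Theorem \ref{main-thm-cri}.

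The main obstacle I anticipate is maintaining the precise identification $\Alb(X)=\Alb(Y)$ together with the $f$-equivariance across the rational map $\pi$, and verifying that the finite cover $\sigma$ is compatible with both $\alb_Y$ and $g_Y$ so that the $Q$-abelian fibre structure of $\alb_Y$ transfers faithfully to $\alb_X$ via the normalized graph $\bar{\Gamma}_{X/Y}$.
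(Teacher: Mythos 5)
Your proposal takes a genuinely different route from the paper -- the paper establishes surjectivity of $\alb_X$ via Fakhruddin's density theorem and Ueno's classification (Lemma \ref{lem-alb-surj}), then disposes of the finite part of the Stein factorization of $\alb_X$ using the key Lemma \ref{lem-alb-fin-iso} (which exploits $-K_X$ being pseudo-effective and the ramification formula to show that a finite Albanese morphism of a variety with an int-amplified endomorphism must be an isomorphism), and finally obtains irreducible equi-dimensional fibres from the dynamical Lemma \ref{fibres-rc+irr}. You instead try to derive everything from the $Q$-abelian structure of the MRC base $Y$. Unfortunately there are genuine gaps.

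The central gap is your claim that the fibres of $\alpha = \alb_Y$ are ``images under $\sigma$ of translates of the abelian subvariety $K \subset A$, hence irreducible.'' The kernel $K$ of the (translated) homomorphism $\alpha\circ\sigma:A\to\Alb(X)$ need not be connected, so its cosets need not be irreducible, and it is not clear that $\sigma$ of such a coset is irreducible. Equivalently: you never establish that $\alb_X$ has \emph{connected} fibres, i.e.\ that the finite part of its Stein factorization is trivial. This is a non-trivial point that requires dynamical input; the paper's argument is precisely Lemma \ref{lem-alb-fin-iso}, which uses Theorem \ref{main-thm-k} to show that a finite Albanese morphism is quasi-\'etale and hence (by purity of branch loci and Mumford's criterion) an isomorphism, applied to the Stein factor. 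Without this, your subsequent appeal to ``connectedness of the fibres via Stein factorization'' is circular. Note also that even after connectedness is in hand, irreducibility of the fibres of $\alb_X$ cannot be read off the $Q$-abelian structure of $\Alb(X)$ alone -- the paper needs the dynamical Lemma \ref{lem-inv-qabe} via Lemma \ref{fibres-rc+irr}, and you would need to invoke something equivalent.

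A secondary gap is in part (2): you assert that dominance of $\mathfrak{alb}_X$ follows from surjectivity of $\alb_X$ ``via this descent.'' But the natural surjection $\mathfrak{Alb}(X)\twoheadrightarrow \Alb(X)$ can have a positive-dimensional kernel for singular normal $X$, so surjectivity of $\alb_X$ says nothing about whether $\mathfrak{alb}_X$ dominates the larger $\mathfrak{Alb}(X)$. Moreover, you propose carrying out the pullback argument ``via $\tilde X$'' on a resolution, but $f$ need not lift to a resolution. The paper instead replaces $X$ by the normalization $W$ of the graph of $\mathfrak{alb}_X$, where $f$ \emph{does} lift (Lemma \ref{lem-int-des2}) and $\Alb(W)=\mathfrak{Alb}(W)=\mathfrak{Alb}(X)$, and then applies part (1) to $W$; your proposal is missing this reduction.

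Your argument that $\alb_X$ is surjective (factoring $\alb_X$ through the MRC base $Y$ via rational connectedness of the fibres, then using the quasi-\'etale abelian cover $\sigma:A\to Y$ and the fact that the image of an abelian variety in $\Alb(X)$ is a translated abelian subvariety which generates) is correct and is an interesting alternative to the paper's Lemma \ref{lem-alb-surj}. Likewise the eigenvalue argument showing $g$ and $h$ are int-amplified is exactly Lemma \ref{lem-int-des1} and is fine. But the connected-fibres step and the part (2) dominance step need to be repaired along the lines of the paper's Lemmas \ref{lem-alb-fin-iso} and the graph-normalization trick.
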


By Theorems \ref{main-thm-mrc} and \ref{thm-kx-pe-qa}, we have the following result.
\begin{theorem}\label{main-thm-qa}
Let $f:X\to X$ be an int-amplified endomorphism of a normal projective variety $X$.
Suppose either $X$ is klt and $K_X$ is pseudo-effective or $X$ is non-uniruled.
Then $X$ is $Q$-abelian.
\end{theorem}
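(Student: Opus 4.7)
The hint in the excerpt tells us exactly which results to combine, so the strategy is to reduce both hypotheses to situations handled by Theorem \ref{main-thm-mrc}. I would split the argument into the two cases and in each show that the special MRC fibration produced by Theorem \ref{main-thm-mrc} collapses to the identity, at which point the $Q$-abelian conclusion is essentially read off from part (2) of that theorem.

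\textbf{Case 1: $X$ non-uniruled.} This is the direct application. Theorem \ref{main-thm-mrc} produces a special MRC fibration $\pi : X \dashrightarrow Y$ together with an int-amplified endomorphism $g$ of $Y$, and its statement explicitly records that $\pi$ is the identity map when $X$ is non-uniruled. So $X=Y$, and part (2) of Theorem \ref{main-thm-mrc} says that $Y$ is a $Q$-abelian variety (with only canonical singularities). Thus $X$ is $Q$-abelian.

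\textbf{Case 2: $X$ klt with $K_X$ pseudo-effective.} Here I would invoke the auxiliary Theorem \ref{thm-kx-pe-qa} cited in the preamble to the statement. The strategy should be to show first that these hypotheses force $X$ to be non-uniruled, after which Case 1 finishes the argument. The non-uniruledness step goes through the Boucksom--Demailly--P\u{a}un--Peternell criterion: for a log resolution $\mu:\widetilde X\to X$, the klt condition gives $K_{\widetilde X}=\mu^{*}K_X+\sum a_iE_i$ with $a_i>-1$, and combined with pseudo-effectivity of $K_X$ together with the MMP for klt pairs one can arrange that $\widetilde X$ is non-uniruled, hence so is $X$. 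Then Case 1 applies.

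\textbf{Main obstacle.} The substantive content in Case 1 is already absorbed into Theorem \ref{main-thm-mrc}, so nothing new is needed there. The only real difficulty lies in Case 2: making precise the passage ``klt $+$ $K_X$ pseudo-effective $\Rightarrow$ non-uniruled'' in the singular setting, which is what Theorem \ref{thm-kx-pe-qa} is doing. Once that bridge is crossed, the proof of Theorem \ref{main-thm-qa} is a one-line composition of the two cited theorems, with no further cone-theoretic or MMP input required.
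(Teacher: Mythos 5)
Your handling of the non-uniruled case is fine and matches what the paper intends: Theorem~\ref{main-thm-mrc} states that the special MRC fibration is the identity when $X$ is non-uniruled, and part (2) of that theorem then gives $Q$-abelianness directly (this is also isolated as Theorem~\ref{thm-nonuniruled}).

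The problem is in your Case 2, where you misread what Theorem~\ref{thm-kx-pe-qa} does. You describe it as establishing the implication ``klt $+$ $K_X$ pseudo-effective $\Rightarrow$ non-uniruled,'' but that is not its content at all: Theorem~\ref{thm-kx-pe-qa} states \emph{verbatim} that an int-amplified endomorphism of a klt normal projective variety with $K_X$ pseudo-effective forces $X$ to be $Q$-abelian, and its proof runs through Galois closures of the iterates, intersection-number limits (Lemma~\ref{lem-lim-int}), vanishing of $c_2$, and the Yau/Beauville characterization of torus quotients --- not through uniruledness. Consequently, Case 2 of Theorem~\ref{main-thm-qa} requires no argument beyond a direct citation of Theorem~\ref{thm-kx-pe-qa}; your proposed detour of first proving non-uniruledness and then feeding it into Case 1 is superfluous. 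It is also logically backwards relative to the paper's dependency graph: the paper proves the non-uniruled case (Theorem~\ref{thm-nonuniruled}) by \emph{reducing to} the klt-and-pseudo-effective case, via Theorem~\ref{main-thm-k} and Lemma~\ref{lem-nu-kx0} (which give $K_X \sim_{\Q} 0$ with canonical singularities) and then invoking Theorem~\ref{thm-kx-pe-qa}. Routing Case 2 through Case 1 therefore traverses a loop back to Theorem~\ref{thm-kx-pe-qa}. Finally, your non-uniruledness sketch (``one can arrange that $\widetilde X$ is non-uniruled'') is left unproven; while the assertion that klt with $K_X$ pseudo-effective implies non-uniruled is true (via running a $K_X$-MMP to a minimal model and applying BDPP/Mori theory), none of that machinery is needed here and none of it is carried out in the proposal.
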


Finally, we generalize the result of equivariant MMP \cite[Theorem 1.8]{MZ} to the int-amplified case.
Note that we need the key observation Lemma \ref{lem-diag-fano} to show (3) below.

\begin{theorem}\label{main-thm-mmp}
Let $f:X\to X$ be an int-amplified endomorphism of a $\mathbb{Q}$-factorial klt projective variety $X$.
Then, replacing $f$ by a positive power, there exist a $Q$-abelian variety $Y$, a morphism $X\to Y$, and an $f$-equivariant relative MMP over $Y$ $$X=X_1\dashrightarrow \cdots \dashrightarrow X_i \dashrightarrow \cdots \dashrightarrow X_r=Y$$ (i.e. $f=f_1$ descends to $f_i$ on each $X_i$), with every $X_i \dashrightarrow X_{i+1}$ a divisorial contraction, a flip or a Fano contraction, of a $K_{X_i}$-negative extremal ray, such that we have:
\begin{itemize}
\item[(1)]
If $K_X$ is pseudo-effective, then $X=Y$ and it is $Q$-abelian.
\item[(2)]
If $K_X$ is not pseudo-effective, then for each $i$, $f_i$ is int-amplified and $X_i\to Y$ is an equi-dimensional morphism with every fibre irreducible. All the fibres are rationally connected if the base field is uncountable.
The $X_{r-1}\to X_r = Y$ is a Fano contraction.
\item[(3)]
$f^*|_{\N^1(X)}$ is diagonalizable over $\mathbb{C}$ if and only if so  is $f_{r}^\ast|_{\N^1(Y)}$.
\end{itemize}
\end{theorem}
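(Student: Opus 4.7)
The plan is to perform an $f$-equivariant relative MMP, generalising the polarized case of \cite[Theorem 1.8]{MZ} via the criterion of Theorem \ref{main-thm-cri}, and then to treat the new diagonalizability statement (3) separately.

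For part (1), if $K_X$ is pseudo-effective then, since $X$ is klt, Theorem \ref{main-thm-qa} directly forces $X$ to be $Q$-abelian; we take $Y=X$ with $r=1$.

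For part (2), assume $K_X$ is not pseudo-effective. Since $X$ is klt, Theorem \ref{main-thm-mrc}(4) makes the special MRC fibration $\pi:X\to Y$ a morphism, and parts (1)--(3) of that theorem give: $Y$ is $Q$-abelian with canonical singularities, $\pi$ is equi-dimensional with rationally connected fibres (noting $\bar{\Gamma}_{X/Y}=X$ in this setting), and there is an induced int-amplified endomorphism $g$ of $Y$. This will be the target of the relative MMP. By Theorem \ref{main-thm-k}, $-K_X$ is numerically equivalent to an effective $\Q$-Cartier divisor, so $K_X$-negative extremal rays contracted over $Y$ exist. The pushforward $f_*$ permutes the set of such rays; using the Cone Theorem to select the finitely many relevant ones plus a pigeonhole argument, after replacing $f$ by a positive power, $f_*$ fixes a chosen ray, so the corresponding divisorial contraction, flip, or Fano contraction to $Y$ descends to give $f_{i+1}$ on $X_{i+1}$. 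By Theorem \ref{main-thm-cri}(3), if $B$ on $X_i$ is big with $f_i^*B-B$ big, then its strict transform on $X_{i+1}$ is big with the analogous difference big, so $f_{i+1}$ is again int-amplified. Termination of the relative MMP over the $Q$-abelian base $Y$ follows from standard results for $\Q$-factorial klt varieties combined with the equi-dimensional rationally connected fibration structure, and forces the final step $X_{r-1}\to Y$ to be a Fano contraction; rational connectedness of the fibres of each $X_i\to Y$ is inherited from Theorem \ref{main-thm-mrc}(3) since MMP steps preserve the general fibre structure over $Y$.

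For part (3), we analyse $f_i^*|_{\N^1(X_i)}$ step by step along the MMP. At a divisorial contraction $\pi_i$ with exceptional divisor $E_i$ (made $f_i$-invariant after a further iteration), $\N^1(X_i) = \pi_i^*\N^1(X_{i+1}) \oplus \R[E_i]$ is an $f_i^*$-invariant direct sum, so diagonalizability of $f_i^*$ is equivalent to that of $f_{i+1}^*$. Flips preserve $\N^1$ via strict transform, making the equivalence trivial. The essential and most delicate step is the Fano contraction $X_{r-1}\to Y$, where $\pi_{r-1}^*\N^1(Y)$ has a positive-dimensional complement in $\N^1(X_{r-1})$ with no a priori $f_{r-1}^*$-invariant supplement. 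This is exactly the situation handled by the key observation Lemma \ref{lem-diag-fano}, which produces the required invariant complement and yields the equivalence of diagonalizability between $f_{r-1}^*|_{\N^1(X_{r-1})}$ and $f_r^*|_{\N^1(Y)}$. Composing these three cases along the MMP proves (3), with the Fano-contraction step being the main obstacle, overcome by Lemma \ref{lem-diag-fano}.
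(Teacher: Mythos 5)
Your outline of parts~(1) and~(3) matches the paper's proof: (1) reduces to Theorem~\ref{thm-kx-pe-qa} (via Theorem~\ref{main-thm-qa}), and (3) is an induction along the MMP steps, with flips and divisorial contractions handled by elementary invariant-subspace considerations and the Fano-contraction step being the genuinely new ingredient, addressed by Lemma~\ref{lem-diag-fano}. Your description of the divisorial case (a $1$-dimensional $f_i^*$-invariant complement spanned by the exceptional divisor class, using Lemma~\ref{lem-eig-int}) is exactly what the paper does.

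Your treatment of part~(2), however, deviates from the paper and contains a real gap. You fix $Y$ in advance as the base of the special MRC fibration (using Theorem~\ref{main-thm-mrc}(4)) and then try to run a \emph{relative} MMP over this $Y$. But there is no argument showing that this relative MMP terminates with $X_r$ literally equal to $Y$. The relative MMP over $Y$ is guaranteed to produce, after finitely many steps, either a Mori fibre space over $Y$ or a relatively minimal model over $Y$; repeating, one could land on a $\Q$-factorial klt variety $Z$ birational over $Y$ with $K_Z$ nef over $Y$, and one would then need a separate argument (using $Y$ being $Q$-abelian with $K_Y\sim_{\Q}0$ and the Negativity Lemma, say) to conclude $Z\simeq Y$, which you do not supply. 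The paper avoids this issue entirely by running the MMP \emph{absolutely}: one iterates \cite[Corollary 1.3.3]{BCHM} until $K_{X_r}$ is pseudo-effective, defines $Y := X_r$, proves $Y$ is $Q$-abelian by Theorem~\ref{thm-kx-pe-qa}, and only then uses Lemma~\ref{mor-q-abelian} (together with Lemma~\ref{fibres-rc+irr} and \cite[Lemma 2.16]{MZ}) to see \emph{a posteriori} that all the intermediate $X_i$ admit equi-dimensional morphisms to $Y$ with irreducible rationally connected fibres; $Y$ is not chosen independently and then matched to the MMP output.

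Two further points. First, your claim that $f_*$ permutes the $K_X$-negative extremal rays and a pigeonhole argument finishes the equivariance is an oversimplification: establishing that each MMP step becomes $f$-equivariant after a power is Theorem~\ref{thm-equi-mmp}, whose proof (via \cite[Lemmas~6.2--6.6]{MZ} together with the replacement Lemma~\ref{lem-MA}) is considerably more delicate than a direct cone-theorem argument. Second, your justification that $f_{i+1}$ remains int-amplified (``strict transform of a big divisor stays big with big difference'') does not even type-check at a Fano contraction, where the dimension drops; the clean argument is via Lemma~\ref{lem-int-des1} for a contraction and Lemma~\ref{lem-int-des2} for a flip or the graph of a rational map, both of which are already packaged inside Theorem~\ref{thm-equi-mmp}.
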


The following theorem is an application of Theorem \ref{main-thm-mmp} and generalizes \cite[Theorem 1.10(1)]{MZ}.
\begin{theorem}\label{main-thm-rc-diag} Let $f:X\to X$ be an int-amplified endomorphism of a smooth rationally connected projective variety $X$.
Then there exists some $s>0$, such that $(f^s)^*|_{\N^1(X)}$ is diagonalizable over $\Q$ with all the eigenvalues being positive integers greater than $1$.
In particular, $f^*|_{\N^1(X)}$ is diagonalizable over $\mathbb{C}$.
\end{theorem}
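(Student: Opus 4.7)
The plan is to combine the equivariant MMP of Theorem~\ref{main-thm-mmp} with a filtration argument on $\N^1(X)$. Since $X$ is smooth (hence $\mathbb{Q}$-factorial klt) and rationally connected, $K_X$ is not pseudo-effective, so Theorem~\ref{main-thm-mmp} applies: after replacing $f$ by a positive power there is an $f$-equivariant MMP
\[
X=X_1\dashrightarrow\cdots\dashrightarrow X_r=Y
\]
over a $Q$-abelian variety $Y$. The dominant rational map $X\dashrightarrow Y$ forces $Y$ to be rationally connected; but a quasi-\'etale cover $A\to Y$ by an abelian variety forces $K_Y\equiv 0$, which contradicts $K_Y$ being non-pseudo-effective (a consequence of $Y$ being rationally connected with klt singularities) unless $\dim Y=0$. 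Hence $Y$ is a point, and Theorem~\ref{main-thm-mmp}(3) implies that $f^*|_{\N^1(X)}$ is diagonalizable over $\mathbb{C}$.

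Next I would descend a rational filtration along the MMP. For each divisorial or Fano contraction $\pi_i\colon X_i\to X_{i+1}$, the pullback $\pi_i^*\colon\N^1(X_{i+1})\hookrightarrow\N^1(X_i)$ is an $f$-equivariant injection whose image is the codimension-one $\mathbb{Q}$-subspace cut out by pairing with the (rational) extremal curve class $[F_i]$. For each flip $X_i\dashrightarrow X_{i+1}$, strict transform (computed on a common resolution) yields a canonical, $f$-equivariant, $\mathbb{Q}$-linear identification $\N^1(X_i)\cong \N^1(X_{i+1})$. Composing these identifications back to $X$ produces a chain of $\mathbb{Q}$-defined, $f^*$-invariant subspaces
\[
\N^1(X)=W_0\supseteq W_1\supseteq\cdots\supseteq W_{r-1}=0
\]
whose successive quotients are of dimension $0$ at flip steps and $1$ at contraction steps.

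Because $f^*$ is represented by an integer matrix on the integer lattice $\NS(X)/\mathrm{torsion}$, its induced action on each one-dimensional rational quotient $W_j/W_{j+1}$ preserves the induced rank-one sublattice and is therefore multiplication by an integer $\lambda_j$; by Theorem~\ref{main-thm-cri} we have $|\lambda_j|>1$. Replacing $f$ by a further positive power makes every $\lambda_j$ a positive integer greater than $1$. Consequently the characteristic polynomial of $(f^s)^*|_{\N^1(X)}$ splits into linear factors over $\mathbb{Q}$ with roots $>1$, and combined with diagonalizability over $\mathbb{C}$ from Theorem~\ref{main-thm-mmp}(3) this upgrades to diagonalizability over $\mathbb{Q}$. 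The last sentence of the theorem then follows because $f^*|_{\N^1(X)}$ is invertible, so diagonalizability of $(f^s)^*$ over $\mathbb{C}$ forces that of $f^*$ over $\mathbb{C}$.

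The main delicate point will be the behaviour across flips: one must verify that the strict-transform identification $\N^1(X_i)\cong\N^1(X_{i+1})$ is genuinely $f^*$-equivariant and preserves the integral lattice structure, best checked on a common resolution. Granted this, the rest of the argument is a clean combination of the MMP tower together with Theorems~\ref{main-thm-cri} and~\ref{main-thm-mmp}(3).
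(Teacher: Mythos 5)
Your proposal follows the same overall strategy as the paper: run the equivariant MMP from Theorem~\ref{main-thm-mmp}, show the end product $Y$ must be a point, invoke Theorem~\ref{main-thm-mmp}(3) for diagonalizability over $\mathbb{C}$, and read off the eigenvalues from the one-dimensional quotients $\N^1(X_i)/\pi^*\N^1(X_{i+1})$ via Lemma~\ref{lem-eig-int}. Your explicit filtration $W_0\supseteq W_1\supseteq\cdots$ is exactly what the paper compresses into the sentence ``$\lambda$ is an eigenvalue of $f_i^*|_{\N^1(X_i)/\pi^*\N^1(X_{i+1})}$ for some $i$,'' so that part is fine (and you do not actually need the extra power: Lemma~\ref{lem-eig-int} already produces a \emph{positive} integer $q>1$, because $f^*$ preserves the $\pi$-ample ray in the quotient).

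The one step where you diverge from the paper, and where there is a real soft spot, is the argument that $Y$ is a point. The paper uses that a smooth rationally connected variety is simply connected (Debarre): since $\pi\colon X\to Y$ is equi-dimensional with irreducible fibres, a nontrivial quasi-\'etale cover $A\to Y$ pulls back to a quasi-\'etale, hence \'etale, connected cover of $X$, contradiction. You instead argue that $Y$ is rationally connected (true, as the image of $X$) and then assert that ``$K_Y$ is non-pseudo-effective, a consequence of $Y$ being rationally connected with klt singularities.'' That implication is clean for smooth or canonical $Y$ (via \cite{BDPP} and $K_{\tilde Y}=\mu^*K_Y+E$ with $E\ge 0$), but for merely klt $Y$ the discrepancy divisor $E$ can have negative coefficients and the deduction is no longer immediate; you are appealing to a nontrivial result without citation. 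A cleaner way to run your route is to skip the pseudo-effectivity detour entirely: a $Q$-abelian variety is never uniruled (rational curves in $Y$ through a general point avoid the codimension-$\geq 2$ branch locus, lift \'etale-locally to the abelian cover $A$, and abelian varieties contain no rational curves), so a rationally connected $Q$-abelian $Y$ must be a point. With that repair, your argument matches the paper's conclusion by a slightly different, equally valid mechanism.
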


When $f:X\to X$ is a polarized endomorphism of a projective variety $X$, the action $f^*|_{\N^1(X)}$ is always diagonalizable over $\mathbb{C}$ and all the eigenvalues are of the same modulus (cf.~\cite[Proposition 2.9]{MZ}).
However, Theorem \ref{main-thm-rc-diag} fails without the assumption of rational connectedness due to Example \ref{exa-fak} given by Najmuddin Fakhruddin.

\begin{remark}[Differences with early papers]
In the papers of \cite{MZ} and \cite{CMZ}, for a polarized $f:X\to X$ with $f^*H\sim qH$ where $q>1$ and $H$ is ample, the nice eigenvector $H$ of $f^*$ is frequently used. For example, by taking top self-intersection of $H$ and the projection formula, one can easily see that $\deg f=H^{\dim(X)}$.
However, for the int-amplified case, there is no such simple way. A rough bound is given in Lemma \ref{lem-deg>1} and it is precisely characterized in the proof of Lemma \ref{lem-diag-fano}.
On the other hand, the pullback action of a polarized $f$ on $\N_r(X)$ is  clearly characterized (cf.~\cite[Lemma 2.4]{Zh-comp} and \cite[Theorem 1.1]{Zhsw}).
For the int-amplified case, we are only able to give a ``limit'' version in Lemma \ref{lem-lim-int}.
Due to these difficulties, all the generalizations as shown in the previous main theorems are required to adjust the old proofs in \cite{MZ} and \cite{CMZ} accordingly based on the new methods in Section \ref{sec-prop}.
Finally, we highlight that the int-amplified criteria in Theorem \ref{main-thm-cri} by the cone analysis are the keys to making all the subsequent methods and results possible.
\end{remark}

The proofs of Theorems \ref{main-thm-cri}, \ref{main-thm-comp}, \ref{main-thm-k} and \ref{main-thm-lc} are in Section \ref{sec-prop}.
The proof of Theorem \ref{main-thm-mrc} is in Section \ref{sec-mrc}.
The proof of Theorem \ref{main-thm-alb} is in Section \ref{sec-alb}.
The proofs of Theorems \ref{main-thm-mmp} and \ref{main-thm-rc-diag} are in Section \ref{sec-proof}.

\par \vskip 1pc
{\bf Acknowledgement.}
The author would like to thank Professor De-Qi Zhang for many inspiring discussions, Professor Najmuddin Fakhruddin for providing Example \ref{exa-fak}, and the anonymous colleague for the suggestion of Example \ref{exa-amplified}.
He thanks the referee for very careful reading and many useful suggestions to revise this paper.
He also thanks Max Planck Institute for Mathematics for providing an impressive acadamic environment.
The author is supported by a Research Assistantship of the National University of Singapore.

\section{Preliminaries}

\begin{setup}{\bf Notation and terminology}\label{nat}.

Let $X$ be a projective variety.
We use Cartier divisor $H$ (always meaning integral, unless otherwise indicated)
and its corresponding invertible sheaf $\mathcal{O}(H)$ interchangeably.

Let $f:X\to X$ be a surjective endomorphism.
A subset $Z \subseteq X$ is said to be {\it $f$-invariant} (resp. {\it $f^{-1}$-invariant}) if
$f(Z) = Z$ (resp. $f^{-1}(Z) = Z$).
We say that $Z \subseteq X$ is {\it $f$-periodic} (resp. {\it $f^{-1}$-periodic}) if
$f^s(Z) = Z$ (resp. $f^{-s}(Z) = Z$) for some $s > 0$.

Denote by $\Per(f)$ the set of all $f$-periodic closed points.

Let $n:=\dim(X)$.
We can regard $\N^1(X):=\NS(X)\otimes_{\Z}\R$ as the space of numerically equivalent classes of $\R$-Cartier divisors.
Denote by $\N_r(X)$ the space of weakly numerically equivalent classes of $r$-cycles with $\R$-coefficients (cf.~\cite[Definition 2.2]{MZ}).
When $X$ is normal, we also call $\N_{n-1}(X)$ the space of weakly numerically equivalent classes of Weil $\R$-divisors.
In this case, $\N^1(X)$ can be regarded as a subspace of $\N_{n-1}(X)$.
We recall the following $f^*$-invariant cones:
\begin{itemize}
\item $\Amp(X)$: the cone of ample classes in $\N^1(X)$,
\item $\Nef(X)$: the cone of nef classes in $\N^1(X)$,
\item $\PE^1(X)$: the cone of pseudo-effective classes in $\N^1(X)$, and
\item $\PE_{n-1}(X)$: the cone of pseudo-effective classes in $\N_{n-1}(X)$.
\end{itemize}
We refer to \cite[\S2]{MZ} for more information.

Given a finite surjective morphism $\pi:X\to Y$ of two normal projective varieties.
There is a ramification divisor formula 
$$K_X=\pi^*K_Y+R_{\pi}$$
where $R_{\pi}$ is the ramification divisor of $\pi$ which is an integral effective Weil divisor of $X$.
We say that $\pi$ is {\it quasi-\'etale} if $\pi$ is \'etale in codimension 1, i.e., $R_{\pi}=0$.
The purity of branch locus tells us that if $\pi$ is quasi-\'etale and $Y$ is smooth, then $\pi$ is \'etale.
\end{setup}

The inspiration for studying int-amplified endomorphisms comes from the so-called amplified endomorphisms which were first defined by Krieger and Reschke (cf.~\cite{Kr}).
Recall that a surjective endomorphism $f$ is {\it amplified} if $f^*L-L=H$ for some Cartier divisor $L$ and ample Cartier divisor $H$. 
Clearly, ``int-amplified'' is ``amplified'' and Fakhruddin showed the following very motivating result.
\begin{theorem}\label{thm-fak}(cf.~\cite[Theorem 5.1]{Fak}) Let $f:X\to X$ be an amplified endomorphism of a projective variety $X$.
Then the set of $f$-periodic points $\Per(f)$ is Zariski dense in $X$.
\end{theorem}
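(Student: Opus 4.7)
The natural strategy is a spreading-out-to-mixed-characteristic argument combined with reduction to finite fields. First I would pick a finitely generated $\mathbb{Z}$-subalgebra $R\subset k$ over which the whole datum $(X,f,L,H)$ descends: a projective morphism $\mathcal{X}\to\mathrm{Spec}(R)$ with a surjective endomorphism $\mathcal{F}$ and Cartier divisors $\mathcal{L},\mathcal{H}$ satisfying $\mathcal{F}^{*}\mathcal{L}-\mathcal{L}=\mathcal{H}$ with $\mathcal{H}$ relatively ample over $R$. After shrinking $\mathrm{Spec}(R)$ I may assume $\mathcal{X}$ is $R$-flat with geometrically integral fibres and $\mathcal{F}$ is fibrewise surjective, so each closed fibre $(X_s,f_s)$ is an amplified endomorphism of a projective variety over a finite field $\overline{\mathbb{F}_q}$; the generic fibre recovers $(X,f,L,H)$.

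On each such closed fibre I would establish Zariski density of $\Per(f_s)$ in $X_s$. Every $\overline{\mathbb{F}_q}$-point lies in some finite set $X_s(\mathbb{F}_{q^n})$ preserved by $f_s$, so its orbit under $f_s$ is eventually periodic. The amplified hypothesis is then invoked quantitatively via intersection theory: the scheme of fixed points $\mathrm{Fix}(f_s^m)=\Gamma_{f_s^m}\cap\Delta\subset X_s\times_s X_s$ is controlled by the intersection numbers of the ample class $(f_s^m)^{*}\mathcal{L}_s-\mathcal{L}_s=\sum_{i=0}^{m-1}(f_s^i)^{*}\mathcal{H}_s$, whose positivity grows rapidly with $m$. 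This growth should force $|\mathrm{Fix}(f_s^m)|$ to be too large to be contained in any fixed proper closed subvariety of $X_s$, ruling out the possibility that all periodic orbits collapse onto a single small invariant subscheme.

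Finally I would transfer density to the generic fibre. Given any proper closed $V\subsetneq X$, spread it to $\mathcal{V}\subsetneq\mathcal{X}$ and form the fixed loci $\mathcal{P}_n:=\mathcal{X}\times_{\mathcal{X}\times_R\mathcal{X}}\Gamma_{\mathcal{F}^n}$. By the previous step, for a dense set of closed $s\in\mathrm{Spec}(R)$ there exists $n$ with $(\mathcal{P}_n\setminus\mathcal{V})_s\neq\emptyset$. A Chevalley-constructibility argument, combined with uniform (in $s$) intersection-theoretic lower bounds on the fibre sizes, should then produce some $n$ whose image under $\mathcal{P}_n\setminus\mathcal{V}\to\mathrm{Spec}(R)$ is constructible and contains a dense open of $\mathrm{Spec}(R)$, hence the generic point, yielding a periodic point of $f^n$ on $X$ lying off $V$. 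Since $V$ was arbitrary, $\Per(f)$ is Zariski dense in $X$.

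The main obstacle is precisely this last synthesis: a naive constructibility argument can fail when all closed points of $\mathrm{Spec}(R)$ are covered by a countable union of proper closed subsets of $\mathrm{Spec}(R)$, so one must really harness the amplified hypothesis uniformly across fibres, via intersection-theoretic growth estimates on $\mathrm{Fix}(\mathcal{F}^n)$ that are independent of $s$, in order to force a single value of $n$ to succeed on a dense open of the base.
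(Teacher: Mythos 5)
The paper does not prove this statement; it is quoted verbatim from Fakhruddin \cite{Fak} as a known result, so there is no internal proof to compare against. Your overall framework---spreading out $(X,f,L,H)$ to a finitely generated $\Z$-algebra $R$, proving density on the positive-characteristic closed fibres, and transferring back to the generic fibre---does match the broad strategy behind Fakhruddin's proof, and you are right to single out the countable-union pitfall on $\operatorname{Spec}(R)$ as the real difficulty in the transfer.

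The positive-characteristic step, however, has a genuine gap as written. The growth of the intersection number $\Gamma_{f^m}\cdot\Delta$ is a count with multiplicity; it does not bound the set-theoretic cardinality of $\operatorname{Fix}(f_s^m)$ from below. And even if $\operatorname{Fix}(f_s^m)$ were known to be a large finite set, a finite set of any cardinality can lie inside a proper closed subvariety $V\subsetneq X_s$, so ``too many fixed points to fit in $V$'' is not a contradiction one can draw. What the amplified hypothesis actually gives over $\overline{\mathbb{F}_q}$---via the restriction argument of Lemma~\ref{lem-amp-res}---is that $\operatorname{Fix}(f_s^m)$ is zero-dimensional for every $m$; the density argument over $\overline{\mathbb{F}_q}$ must then combine this with the fact that every $\overline{\mathbb{F}_q}$-point is preperiodic and a more structural analysis of the Zariski closure of $\Per(f_s)$ and of the preimages $f_s^{-i}$ of that closure, rather than a raw cardinality estimate. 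Likewise, the ``uniform in $s$'' intersection-theoretic lower bound you invoke at the end is precisely the content that must be proved and is not supplied by the heuristic: without a concrete, fibre-independent statement (for instance, a single $m$ for which $\operatorname{Fix}(\mathcal{F}^m)\setminus\mathcal{V}\to\operatorname{Spec}(R)$ is dominant), the Chevalley step cannot be closed---for exactly the reason you yourself identify.
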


Fakhruddin's result can be applied to give a rough characterization of projective varieties admitting amplified endomorphisms by the Kodaira dimension. First, we give the following two simple but useful results. 
\begin{lemma}\label{lem-amp-res}
Let $f:X\to X$ be an amplified (resp.~int-amplified) endomorphism of a projective variety $X$.
Let $Z$ be a closed subvariety of $X$ such that $f(Z)=Z$.
Then $f|_Z$ is amplified (resp.~int-amplified).
\end{lemma}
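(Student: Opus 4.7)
The plan is straightforward: restrict the amplification identity from $X$ to $Z$ and verify that all the relevant properties survive pullback along the closed immersion.

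First I would observe that since $f(Z) = Z$, the map $f|_Z \colon Z \to Z$ is a well-defined surjective endomorphism, and it fits into a commutative square with the closed immersion $\iota \colon Z \hookrightarrow X$, namely $f \circ \iota = \iota \circ f|_Z$. Hence, for any Cartier divisor $D$ on $X$, the restriction $D|_Z := \iota^* D$ is again Cartier (being the pullback of an invertible sheaf along a morphism of schemes), and functoriality of pullback yields the key identity $(f|_Z)^*(D|_Z) = (f^* D)|_Z$.

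Next, I would write the defining relation $f^* L - L = H$ with $L$ a Cartier divisor and $H$ ample on $X$ (with $L$ additionally ample in the int-amplified case), and restrict it to $Z$ to obtain
\[
(f|_Z)^*(L|_Z) - L|_Z = H|_Z.
\]
Since ampleness of a Cartier divisor is preserved under pullback along a closed immersion, $H|_Z$ is ample on $Z$; moreover $L|_Z$ is ample when $L$ was ample. This exhibits $f|_Z$ as amplified (respectively int-amplified), as required.

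There is essentially no obstacle here: the content of the lemma is simply that the defining divisorial equation is compatible with $f$-equivariant restriction, and that both Cartier-ness and ampleness descend to closed subvarieties. One could alternatively invoke Theorem~\ref{main-thm-cri}(3) in the int-amplified case, by noting that the restriction of a big $\R$-Cartier divisor to a subvariety need not remain big, so the direct pullback argument above is in fact cleaner and treats both cases uniformly.
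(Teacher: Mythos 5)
Your proof is correct and is essentially identical to the paper's argument: both restrict the defining relation $f^*L - L = H$ along the closed immersion and use that ampleness of a Cartier divisor is preserved under such restriction. Your closing remark that the Theorem~\ref{main-thm-cri}(3) criterion would be awkward here (bigness not surviving restriction) is a sensible observation but not part of the paper's proof.
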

\begin{proof}
Let $i:Z\to X$ be the inclusion map.
Suppose $f^*L-L=H$ for some Cartier divisors $L$ and $H$.
Let $L|_Z:=i^*L$ and $H|_Z:=i^*H$.
Then $(f|_Z)^*(L|_Z)-L|_Z=H|_Z$.
Note that the restriction of an ample Cartier divisor is still ample.
So the lemma is proved.
\end{proof}

\begin{lemma}\label{lem-countable-per} Let $f:X\to X$ be an amplified endomorphism of a projective variety $X$.
Then $\Per(f)$ is countable.
\end{lemma}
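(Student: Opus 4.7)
Writing $\Per(f)=\bigcup_{s\geq 1}\Fix(f^s)$ exhibits $\Per(f)$ as a countable union of fixed-point sets of the iterates, so it is enough to show that $\Fix(f^s)$ is finite for every $s\geq 1$. For this I would first observe that each iterate $f^s$ is itself amplified: if $f^\ast L - L = H$ with $H$ ample, telescoping gives
\[
(f^s)^\ast L - L \;=\; \sum_{i=0}^{s-1} (f^i)^\ast\bigl(f^\ast L - L\bigr) \;=\; \sum_{i=0}^{s-1} (f^i)^\ast H.
\]
A surjective endomorphism of a projective variety is finite, so each pullback $(f^i)^\ast H$ is ample, and the sum of ample classes remains ample. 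Hence the problem reduces to showing that for any amplified endomorphism $g:X\to X$ of a projective variety, the fixed locus $\Fix(g):=\{x\in X:g(x)=x\}$ is finite.

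The set $\Fix(g)$ is the preimage of the diagonal under $(g,\id_X):X\to X\times X$, hence a closed algebraic subset of $X$, and therefore has only finitely many irreducible components. Suppose for contradiction that some component $Z$ has $\dim Z>0$, and endow $Z$ with its reduced structure. Then $g|_Z=\id_Z$, so in particular $g(Z)=Z$, and Lemma \ref{lem-amp-res} applies: $g|_Z$ is amplified, so there exist a Cartier divisor $L'$ and an ample Cartier divisor $H'$ on $Z$ with $(g|_Z)^\ast L' - L' = H'$. But $(g|_Z)^\ast$ is the identity on $\Pic(Z)$, forcing $H'=0$, which contradicts the ampleness of $H'$ on the positive-dimensional variety $Z$. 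This contradiction shows every component of $\Fix(g)$ is a point, so $\Fix(g)$ is finite, completing the argument.

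I do not anticipate any serious obstacle; the only mild point requiring care is the iterate-is-amplified verification, which rests on the standard fact that a surjective endomorphism of a projective variety is finite and that pullbacks (and finite sums) of ample divisors under finite surjective morphisms remain ample. Once this is in hand, the rest is a clean application of Lemma \ref{lem-amp-res} combined with the observation that the identity morphism cannot be amplified on a positive-dimensional variety.
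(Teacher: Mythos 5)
Your proof is correct and follows essentially the same strategy as the paper's: a positive-dimensional irreducible component $Z$ of the fixed locus of some iterate would have $f^s|_Z = \id_Z$, contradicting Lemma~\ref{lem-amp-res}. The only genuine addition is that you explicitly verify (via the telescoping sum $(f^s)^\ast L - L = \sum_{i=0}^{s-1}(f^i)^\ast H$) that $f^s$ is again amplified, a point the paper invokes implicitly when applying Lemma~\ref{lem-amp-res} to $f^s$; this is a worthwhile detail to have spelled out.
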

\begin{proof} Suppose $\Per(f)$ is uncountable.
Then there exists some $s>0$, such that the set $S$ of all $f^s$-fixed points is infinite.
Let $Z$ be an irreducible component of the closure of $S$ in $X$ with $\dim(Z)>0$.
Then $f^s|_Z=\id_Z$, a contradiction to $f^s|_Z$ being amplified by Lemma \ref{lem-amp-res}.
\end{proof}

\begin{theorem}\label{thm-amp-kod}
Let $f:X\to X$ be an amplified endomorphism of a projective variety $X$.
Then the Kodaira dimension $\kappa(X)\le 0$.
\end{theorem}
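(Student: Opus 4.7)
The plan is to assume $\kappa(X)\ge 1$ and derive a contradiction by combining Fakhruddin's density of periodic points (Theorem~\ref{thm-fak}), the countability of periodic points (Lemma~\ref{lem-countable-per}), and the Iitaka fibration. As a preliminary reduction, I would base change to an uncountable algebraically closed extension $\bar L/k$: both $\kappa(X)$ (via flat base change on pluricanonical sections) and the amplified property (since ampleness is preserved under base change) descend to $X_{\bar L}$ with its induced endomorphism, so we may assume $k$ itself is uncountable.

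If $\kappa(X)=\dim X$, then $X$ is of general type, and a classical rigidity theorem of Iitaka--Kobayashi--Ochiai forces any surjective endomorphism to be an automorphism of finite order. However, no amplified $f$ can have finite order: iterating $f^\ast L-L=H$ yields
\[
(f^\ast)^N L-L=\sum_{i=0}^{N-1}(f^\ast)^i H,
\]
which is a sum of nonzero nef classes (since $f^\ast$ is injective on $\N^1(X)$ and $H$ is ample), and therefore itself nonzero in $\N^1(X)$; but $f^N=\id$ would force the sum to vanish, a contradiction.

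If $0<\kappa(X)<\dim X$, consider the Iitaka fibration $\phi:X\dashrightarrow Y$ with $\dim Y=\kappa(X)$ and $Y$ of general type. By standard functoriality, $f$ descends to a dominant rational self-map $g:Y\dashrightarrow Y$ satisfying $\phi\circ f=g\circ\phi$; on a good model $g$ is a morphism, and the general-type rigidity used above again forces $g$ to be an automorphism of finite order. Replacing $f$ by a suitable iterate we may assume $g=\id_Y$, so that $f(F_y)=F_y$ for every general fiber $F_y:=\phi^{-1}(y)$. Then Lemma~\ref{lem-amp-res} makes $f|_{F_y}$ amplified, and Theorem~\ref{thm-fak} provides a Zariski dense, and in particular nonempty, set of periodic points inside each positive-dimensional fiber $F_y$. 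Since the general fibers are pairwise disjoint and $Y$ has uncountably many closed points (as $k$ is uncountable), picking one periodic point per fiber produces uncountably many distinct elements of $\Per(f)$, contradicting Lemma~\ref{lem-countable-per}.

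The main technical obstacle is the Iitaka-fibration bookkeeping: descending $f$ functorially to a self-map $g$ of $Y$, passing to a model where $g$ is a morphism so that the general-type rigidity applies, and checking after iteration that the general fibers are genuinely $f$-stable closed subvarieties to which Lemma~\ref{lem-amp-res} and Theorem~\ref{thm-fak} can be legitimately applied. Once that birational setup is settled, the countable-versus-uncountable dichotomy closes the argument cleanly.
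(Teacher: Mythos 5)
Your overall strategy matches the paper's exactly: base change to an uncountable field, descend $f$ along the Iitaka fibration to a finite-order automorphism $g$ of the base $Y$, replace $f$ by a power so that $g=\id_Y$, then combine Fakhruddin's density (Theorem~\ref{thm-fak}) on fibers with Lemma~\ref{lem-countable-per} to get an uncountable set of periodic points and a contradiction. Your handling of the $\kappa(X)=\dim X$ case is fine, and the observation that an amplified endomorphism cannot have finite order (since $(f^\ast)^N L - L = \sum_{i<N}(f^\ast)^i H$ would be a nonzero sum of nef classes) is a clean little argument, though the paper folds this case into the general one rather than treating it separately.

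However, there is a genuine gap in the intermediate case $0<\kappa(X)<\dim X$: you assert that the Iitaka base $Y$ is of general type and then invoke general-type rigidity to conclude that $g$ is a finite-order automorphism. This assertion is false. The base of the Iitaka fibration need not be of general type --- the standard counterexample is a properly elliptic surface $X$ fibered over $Y=\mathbb{P}^1$, where $\kappa(X)=1=\dim Y$ but $\kappa(Y)=-\infty$. Consequently the rigidity theorem of Iitaka--Kobayashi--Ochiai does not apply to $Y$, and the claim that $g$ has finite order is left unjustified. The fact that $g$ is nevertheless a finite-order automorphism is true, but it is a nontrivial theorem; the paper obtains it by citing Nakayama--Zhang \cite[Theorem A]{NZ09}, which treats the equivariance of the Iitaka fibration under surjective self-maps directly without any general-type hypothesis on $Y$. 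Once you replace your incorrect justification with the correct reference, the rest of the argument (and the bookkeeping with the graph of $\phi$, the domain of definition, and possibly reducible fibers, which the paper handles via the sets $U_y$ and componentwise periods $s_y$) goes through as you sketch.
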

\begin{proof}
We may assume $X$ is over the field $k$ which is uncountable by taking the base change.
Suppose $\kappa(X)>0$.
Let $\pi:X\dasharrow Y$ be an Iitaka fibration.
Then $\dim(Y)=\kappa(X)>0$ and $f$ descends to an automorphism $g:Y\to Y$ of finite order by \cite[Theorem A]{NZ09}.
Replacing $f$ by a positive power, we may assume $g=\id_Y$.
Let $U$ be an open dense subset of $X$ such that $\pi$ is well-defined over $U$.
Let $W$ be the graph of $\pi$ and $p_1:W\to X$ and $p_2:W\to Y$ the two projections.
For any closed point $y\in Y$, denote by $X_y:=p_1(p_2^{-1}(y))$ and $U_y:=U\cap X_y$.
Note that $U_{y_1}\cap U_{y_2}=\emptyset$ if $y_1\neq y_2$.
Since $\pi\circ f=\pi$, $f^{-1}(X_y)=X_y$.
Then for some $s_y>0$, $f^{-s_y}(X_y^i)=X_y^i$ for every irreducible component $X_y^i$ of $X_y$, and $f^{s_y}|_{X_y^i}$ is amplified by Lemma \ref{lem-amp-res}.
If $U_y\neq\emptyset$, then $\Per(f)\cap U_y=\Per(f|_{X_y})\cap U_y=\bigcup_i\Per(f^{s_y}|_{X_y^i})\cap U_y\neq \emptyset$ by Theorem \ref{thm-fak}.
Note that $\Per(f)\supseteq\bigcup_{y\in Y} (\Per(f)\cap U_y)$ and there are uncountably many $y\in Y$ such that $U_y\neq\emptyset$.
Then $\Per(f)$ is uncountable, a contradiction to Lemma \ref{lem-countable-per}.
\end{proof}

\begin{remark}
There do exist amplified automorphisms (eg.~automorphisms of positive entropy on abelian surfaces), while the degree of an int-amplified endomorphism is always greater than $1$ (cf.~Lemma \ref{lem-deg>1}).
%Though products of amplified endomorphisms are still amplified,
Unlike the polarized case (cf.~\cite[Corollary 3.12]{MZ}),
it is in general impossible to preserve an amplified automorphism via a birational equivariant lifting (cf.~\cite[Lemma 4.4]{Kr} and \cite[Theorem 1.2]{Re}).
On the other hand, we do not know whether ``amplified'' can be preserved via an equivariant descending (cf.~\cite[Question 1.10]{Kr}).
However, we shall show in the first half of Section \ref{sec-prop} that int-amplified endomorphisms have all these nice properties like the polarized case (cf.~\cite[\S 3]{MZ}).
They are necessary for us to set up the equivariant MMP later in Section \ref{sec-mmp}.
\end{remark}

\begin{remark}\label{rmk-ref}
In general, even after taking an equivariant lifting, an amplified endomorphism may not split into a product of an amplified automorphism and an int-amplified endomorphism; and an int-amplified endomorphism may not split into a product of two polarized endomorphisms; see Section \ref{sec-exa} for the precise argument and examples.
\end{remark}

\section{Properties of int-amplified endomorphisms}\label{sec-prop}

We refer to \cite[Definition 2.6]{MZ} for the notation and symbols involved below.

\begin{lemma}\label{lem-eig>1} Let $\varphi:V\to V$ be an invertible linear map of a positive dimensional real normed vector space $V$.
Let $C$ be a convex cone of $V$ such that $C$ spans $V$ and its closure $\overline{C}$ contains no line.
Assume $\varphi(C)=C$ and $\varphi(\ell)-\ell=h$ for some $\ell$ and $h$ in $C^\circ $ (the interior part of $C$).
Then all the eigenvalues of $\varphi$ are of modulus greater than $1$.
\end{lemma}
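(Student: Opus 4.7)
The plan is to prove the equivalent statement that the spectral radius of $\varphi^{-1}$ is strictly less than $1$, i.e., $\varphi^{-n}(v) \to 0$ for every $v \in V$. The mechanism is a telescoping identity coming directly from $\varphi(\ell) = \ell + h$, combined with a monotone-convergence argument in the order induced by $\overline{C}$.

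Rewriting $\varphi(\ell) = \ell + h$ as $\varphi^{-1}(\ell) = \ell - \varphi^{-1}(h)$ and iterating yields the telescoping identity
$$\ell - \varphi^{-n}(\ell) \,=\, \sum_{i=1}^{n} \varphi^{-i}(h) \,=:\, S_n.$$
Since $\varphi$ is an invertible linear map with $\varphi(C) = C$, the inverse $\varphi^{-1}$ also preserves $C$ and hence $C^\circ$ (linear homeomorphisms preserve interiors), so $\varphi^{-i}(h) \in C^\circ$ for every $i \geq 1$. Thus the differences $S_n - S_{n-1} = \varphi^{-n}(h)$ lie in $C^\circ \subseteq \overline{C}$, while $\ell - S_n = \varphi^{-n}(\ell) \in C \subseteq \overline{C}$; in other words, $(S_n)$ is monotone increasing in the order induced by $\overline{C}$ and bounded above by $\ell$.

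The hypotheses on $C$ now become decisive: $\overline{C}$ has nonempty interior (since $C$ spans $V$) and is pointed (it contains no line), so it is a proper cone in the finite-dimensional space $V$. By a standard consequence of pointedness, there exists a linear functional $\phi$ with $\phi(x) \geq c\|x\|$ on $\overline{C}$ for some $c > 0$, and the order interval $\{x \in \overline{C} : \ell - x \in \overline{C}\}$ is therefore norm-bounded and closed, hence compact. The monotone bounded sequence $(S_n)$ thus converges in $V$, which forces $\varphi^{-n}(h) = S_n - S_{n-1} \to 0$.

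To extend this from $h$ to an arbitrary $v \in V$, use that $h \in C^\circ$: there exists $\varepsilon > 0$ with $h \pm \varepsilon v \in \overline{C}$, and applying $\varphi^{-n}$ produces two vectors $\varphi^{-n}(h) \pm \varepsilon\varphi^{-n}(v) \in \overline{C}$ whose sum $2\varphi^{-n}(h)$ tends to $0$. Pointedness (again via the functional $\phi$) forces each summand to tend to $0$, so $\varphi^{-n}(v) \to 0$. Varying $v$ over a basis of $V$ gives $\varphi^{-n} \to 0$ as an operator, so every eigenvalue of $\varphi$ has modulus strictly greater than $1$. The main subtlety is invoking monotone convergence for a proper-cone order in place of $\R_{\geq 0}$; once compactness of order intervals in a proper cone is granted, the rest is formal telescoping plus a squeezing argument for the interior point $h$.
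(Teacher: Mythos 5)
Your proof is correct, but it takes a genuinely different route from the paper's. The paper first reduces to $C$ closed, invokes Birkhoff's Perron--Frobenius theorem for invariant cones to produce a nonzero eigenvector $v\in C$ with $\varphi(v)=rv$ where $1/r$ is the spectral radius of $\varphi^{-1}$, and then derives a contradiction from $r\le 1$: taking the boundary point $\ell-av\in\partial C$ on the ray from $\ell$ in direction $-v$, the identity $\varphi(\ell-av)-(\ell-av)=h+a(1-r)v\in C^\circ$ forces $\ell-av\in C^\circ$, absurd. Your proof instead shows directly that $\varphi^{-n}\to 0$ as operators: the telescoping identity $\ell-\varphi^{-n}(\ell)=\sum_{i=1}^n\varphi^{-i}(h)$ exhibits an increasing sequence bounded above by $\ell$ in the $\overline{C}$-order, whose convergence follows from compactness of order intervals in a closed proper cone (which needs $\overline{C}$ pointed and spanning, exactly the hypotheses, together with the implicit finite-dimensionality of $V$ that both proofs use). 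The squeezing step from $h$ to an arbitrary $v$ via a strictly positive functional is then routine. What each buys: the paper's argument is shorter but outsources the heavy lifting to Birkhoff's theorem; yours is more self-contained and elementary, replacing Perron--Frobenius with the standard but more hands-on facts that a closed pointed generating cone admits a uniformly positive functional and has compact order intervals. Both establish slightly more than the eigenvalue statement (the paper produces the dominant eigenvector of $\varphi^{-1}$; you show norm convergence $\varphi^{-n}\to 0$), and both suffice.
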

\begin{proof}
Note that $\overline{C}^\circ=C^\circ$ since $C$ is a convex cone.
So we may assume $C$ is closed.
Let $\frac{1}{r}$ be the spectral radius of $\varphi^{-1}$.
Note that $\varphi^{\pm}(C)=C$ and $C$ spans $V$ and contains no line.
By a version of the Perron-Frobenius theorem (cf. \cite{Bi}),
$\varphi(v)=rv$ for some nonzero $v\in C$.
Suppose $r\le 1$.
Since $\ell\in C^\circ$ and $v\neq 0$, $\ell-av\in \partial{C}:=C\backslash C^{\circ}$ for some $a>0$.
Then $\varphi(\ell-av)-(\ell-av)=h+a(1-r)v\in C^\circ$.
So $\varphi(\ell-av)\in C^\circ$ and hence $\ell-av\in C^\circ$, a contradiction.
\end{proof}

\begin{proposition}\label{prop-eig-inn}
Let $\varphi:V\to V$ be an invertible linear map of a positive dimensional real normed vector space $V$.
Assume $\varphi(C)=C$ for a convex cone $C\subseteq V$.
Suppose further all the eigenvalues of $\varphi$ are of modulus greater than $1$.
Then $(\varphi-\id_V)^{-1}(C)\subseteq C$.
\end{proposition}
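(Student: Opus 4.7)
The plan is to expand $(\varphi-\id_V)^{-1}$ as a Neumann series in $\varphi^{-1}$ and read the conclusion off from the $\varphi$-invariance of $C$ term by term. Because every eigenvalue of $\varphi$ has modulus strictly greater than $1$, in particular $1$ is not an eigenvalue, so $\varphi-\id_V$ is invertible. Hence it is enough to fix $c\in C$, set $v:=(\varphi-\id_V)^{-1}(c)$, and show $v\in C$.

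The spectral radius of $\varphi^{-1}$ equals the reciprocal of the minimum modulus of an eigenvalue of $\varphi$, so it is strictly less than $1$; via a Jordan-form estimate (or Gelfand's formula) one may replace the norm on the finite-dimensional space $V$ by an equivalent one in which $\|\varphi^{-1}\|<1$. The factorisation $\varphi-\id_V=\varphi\,(\id_V-\varphi^{-1})$ then gives the absolutely convergent expansion
\[
v \;=\; (\id_V-\varphi^{-1})^{-1}\varphi^{-1}(c)\;=\;\sum_{k=1}^{\infty}\varphi^{-k}(c).
\]
The same identity drops out of the telescoping relation $v=\varphi^{-1}(v)+\varphi^{-1}(c)$, iterated to give $v=\sum_{k=1}^{N}\varphi^{-k}(c)+\varphi^{-N}(v)$ with remainder $\varphi^{-N}(v)\to 0$.

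Invariance then finishes the computation. Since $\varphi$ is invertible with $\varphi(C)=C$, we also have $\varphi^{-1}(C)=C$, so $\varphi^{-k}(c)\in C$ for every $k\ge 1$. A convex cone is closed under positive linear combinations, so each partial sum $s_N:=\sum_{k=1}^{N}\varphi^{-k}(c)$ lies in $C$, and hence $v=\lim_{N\to\infty}s_N$ lies in the closure $\overline{C}$. The step I expect to require some care is promoting ``$v\in\overline{C}$'' to ``$v\in C$''. When $C$ is closed, as with $\Nef(X)$, $\PE^1(X)$ and $\PE_{n-1}(X)$ in the intended applications, this is immediate; for a non-closed cone such as $\Amp(X)$ one instead splits $v=s_1+(v-s_1)$, noting that $s_1=\varphi^{-1}(c)\in\Amp(X)$ while $v-s_1=\sum_{k\ge 2}\varphi^{-k}(c)\in\overline{\Amp(X)}=\Nef(X)$, so $v\in\Amp(X)+\Nef(X)\subseteq\Amp(X)$. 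Outside of this convention-dependent subtlety the argument is pure spectral bookkeeping via the Neumann series.
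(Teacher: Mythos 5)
Your Neumann series $v=\sum_{k\ge 1}\varphi^{-k}(c)$ is exactly the telescoping identity the paper also uses, and everything up to ``$v\in\overline{C}$'' is fine. The genuine gap is precisely the one you flag yourself: the promotion from $\overline{C}$ to $C$. You only close it for closed cones and for the open cone $\Amp(X)$, via $\Amp(X)+\Nef(X)\subseteq\Amp(X)$. But the proposition is stated for an \emph{arbitrary} convex cone and is applied that way: in Theorem \ref{main-thm-cri}(4) the cone is an arbitrary $\varphi$-invariant convex cone, and in the proof of Theorem \ref{main-thm-k} (via Theorem \ref{thm-cri2}) the cone is the set of classes of \emph{effective} Weil $\R$-divisors, which is neither open nor closed. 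For such a cone your splitting argument fails: $C+\overline{C}\subseteq C$ is false in general. A concrete counterexample is $C=\{(x,y): y>0\}\cup\{(x,0): x>0\}\subseteq\R^2$, where $(1,0)\in C$ and $(-2,0)\in\overline{C}$ yet $(-1,0)\notin C$. In the effective-cone application your argument would only give that $-K_X$ is pseudo-effective, which is strictly weaker than the effectivity statement proved in the paper.

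The paper closes the gap with an extra construction that you are missing. Setting $e:=\varphi(v)-v\in C$, it introduces the sub-cone $E\subseteq C$ generated by the entire orbit $\{\varphi^{-i}(e)\}_{i\in\Z}$, and the finitely generated cones $E_m$ generated by $\varphi^{-1}(e),\dots,\varphi^{-m}(e)$. After showing that $E_m$ spans $W:=\operatorname{span}(E)$ for $m\gg 1$, it observes that the partial sum $s_m=\sum_{i=1}^{m}\varphi^{-i}(e)$ lies in the relative interior $E_m^{\circ}\subseteq\overline{E}^{\circ}$, while the tail $v-s_m$ lies in $\overline{E}$. The standard convexity facts that $K^{\circ}+\overline{K}\subseteq K^{\circ}$ for a convex cone $K$ with nonempty (relative) interior, and that $\overline{E}^{\circ}=E^{\circ}$, then give $v\in E^{\circ}\subseteq E\subseteq C$. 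In effect the paper applies your ``ample-plus-nef'' trick not to $C$ itself (where it may fail) but to the auxiliary closed cone $\overline{E}$, whose relative interior is guaranteed to lie inside $E$, and hence inside $C$. This is the step you would need to add to make the argument work for arbitrary convex cones.
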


\begin{proof}
%Note that $\varphi^{\pm1}(C)=C$ implies $\varphi^{\pm1}(\overline{C})=\overline{C}$ and $\overline{C}^\circ=C^\circ$ since $C$ is a convex cone.

Suppose $e:=\varphi(v)-v\in C$.
If $e=0$, then $v=0$ since no eigenvalue of $\varphi$ is $1$.
Next, we assume $e\neq 0$.

For $m\ge 1$, let $E_m$ be the convex cone generated by $\{\varphi^{-1}(e), \cdots, \varphi^{-m}(e)\}$.
Let $E_\infty$ be the convex cone generated by $\{\varphi^{-i}(e)\}_{i\ge 1}$.
Let $E$ be the convex cone generated by $\{\varphi^{-i}(e)\}_{i\in \mathbb{Z}}$.
Then all the above cones are subcones of $C$.
Note that $\varphi^{\pm}(E)=E$ and $\varphi^{-1}(E_\infty)\subseteq E_\infty$.
Let $W$ be the vector space spanned by $E$.
Since $e\neq 0$, $\dim(W)>0$.

We claim that $E_\infty$ spans $W$.
Let $W'$ be the vector space spanned by $E_\infty$.
Then $\varphi^{-1}(W')\subseteq W'$ and hence $\varphi(W')= W'$ since $W'$ is finite dimensional and $\varphi$ is invertible.
In particular, $\varphi^i(e)\in W'$ for any $i\in \mathbb{Z}$ and hence $W\subseteq W'$.
So the claim is proved.

Now we may assume $E_m$ spans $W$ for $m\gg 1$.
This implies $E_m^\circ\subseteq \overline{E}^\circ$.
Therefore, $s_m:=\sum\limits_{i=1}^m \varphi^{-i}(e)\in E_m^{\circ}\subseteq \overline{E}^{\circ}$.
Note that $\lim\limits_{n\to +\infty} \varphi^{-n}(v)= 0$ since  all the eigenvalues of $\varphi$ are of modulus greater than $1$.
Then $v=\lim\limits_{n\to +\infty} v-\varphi^{-n}(v)=\lim\limits_{n\to +\infty}\sum\limits_{i=1}^n \varphi^{-i}(e)=s_m+\lim\limits_{n\to +\infty}\sum\limits_{i={m+1}}^n \varphi^{-i}(e)\in \overline{E}^\circ=E^{\circ}$.
In particular, $v\in E\subseteq C$.
\end{proof}

Now we are able to prove Theorem \ref{main-thm-cri}.
%\begin{proposition}\label{prop-3-equiv} Let $f:X\to X$ be a surjective endomorphism of a projective variety $X$.
%Then the following are equivalent.
%\begin{itemize}
%\item[(1)] The endomorphism $f$ is int-amplified.
%\item[(2)] All the eigenvalues of $\varphi:=f^*|_{\N^1(X)}$ are of modulus greater than $1$.
%\item[(3)] There exists some big $\R$-Cartier divisor $B$ such that $f^*B-B$ is big.
%\item[(4)] If $C$ is a $\varphi^{\pm}$-invariant convex cone in $\N^1(X)$,
%then $\emptyset\neq(\varphi-\id_{\N^1(X)})^{-1}(C)\subseteq C$.
%\end{itemize}
%\end{proposition}
\begin{proof}[Proof of Theorem \ref{main-thm-cri}] 
Let $V:=\N^1(X)$ and $\varphi:=f^*|_{\N^1(X)}$.
It is clear that (1) implies (3), and (4) implies (1) by letting $C=\Amp(X)$.

Suppose all the eigenvalues of $\varphi$ are of modulus greater than $1$.
Then $\varphi-\id_V$ is invertible.
By Proposition \ref{prop-eig-inn}, (2) implies (4).

Suppose $f^*B-B$ is big for some big $\R$-Cartier divisor $B$.
Let $C:=\PE^1(X)$ the cone of all classes of pseudo-effective $\R$-Cartier divisors in $\N^1(X)$.
By applying Lemma \ref{lem-eig>1} to $C$, (3) implies (2).
\end{proof}

Let $X$ be a normal projective variety of dimension $n$ and $D$ a Weil-$\R$ divisor. Recall that $D$ is big if its class $[D]\in \PE_{n-1}(X)$; see \cite[Theorem 3.5]{FKL} for equivalent definitions.
Considering the action $f^*|_{\N_{n-1}(X)}$ and the cone $\PE_{n-1}(X)$, we have similar criteria as follows.
\begin{theorem}\label{thm-cri2} Let $f:X\to X$ be a surjective endomorphism of an $n$-dimensional normal projective variety $X$.
Then the following are equivalent.
\begin{itemize}
\item[(1)] The endomorphism $f$ is int-amplified.
\item[(2)] All the eigenvalues of $\varphi:=f^*|_{\N_{n-1}(X)}$ are of modulus greater than $1$.
\item[(3)] There exists some big Weil $\R$-divisor $B$ such that $f^*B-B$ is a big Weil $\R$-divisor.
\item[(4)] If $C$ is a $\varphi$-invariant convex cone in $\N_{n-1}(X)$,
then $\emptyset\neq(\varphi-\id_{\N_{n-1}(X)})^{-1}(C)\subseteq C$.
\end{itemize}
\end{theorem}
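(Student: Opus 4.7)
The plan is to mirror the proof of Theorem \ref{main-thm-cri}, with $\N_{n-1}(X)$ in place of $\N^1(X)$ and $\PE_{n-1}(X)$ in place of $\PE^1(X)$. Set $V:=\N_{n-1}(X)$ and $\varphi:=f^*|_V$; by assumption $\varphi$ is an invertible linear map. The implication $(1)\Rightarrow(3)$ is immediate because any ample Cartier divisor is in particular a big Weil $\R$-divisor, so an int-amplified structure $f^*L-L=H$ with $L,H$ ample Cartier also verifies $(3)$ with $B:=L$.

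For $(3)\Rightarrow(2)$, I would apply Lemma \ref{lem-eig>1} to $V$ with $C:=\PE_{n-1}(X)$. The cone $\PE_{n-1}(X)$ spans $V$ (since every Weil $\R$-divisor is a difference of two effective ones), is salient so its closure contains no line, and is $\varphi$-invariant; the latter follows because $f$ is finite surjective, so $f^*$ of an effective Weil divisor is effective and $f_*$ supplies an inverse up to the factor $\deg f$, forcing $\varphi(C)=C$. The big $\R$-divisor $B$ from $(3)$ gives $\ell:=[B]$ and $h:=[f^*B-B]$ both in $C^{\circ}$ by the characterization of bigness as lying in the interior of the pseudo-effective cone (cf.~\cite[Theorem 3.5]{FKL}). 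Lemma \ref{lem-eig>1} then yields $(2)$. For $(2)\Rightarrow(4)$, since $1$ is not an eigenvalue of $\varphi$, the map $\varphi-\id_V$ is invertible, so $(\varphi-\id_V)^{-1}(C)$ is nonempty for every nonempty cone $C$, and Proposition \ref{prop-eig-inn} supplies the inclusion $(\varphi-\id_V)^{-1}(C)\subseteq C$.

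The step I expect to require extra care is $(4)\Rightarrow(1)$: applying $(4)$ to $\PE_{n-1}(X)$ produces only Weil divisor data, not an ample Cartier class. The natural fix is to apply $(4)$ instead to $C:=\Amp(X)$, viewed as a convex cone inside $V$ via the inclusion $\N^1(X)\hookrightarrow \N_{n-1}(X)$. This $C$ is $\varphi$-invariant because $f^*|_{\N_{n-1}(X)}$ restricts to the usual $f^*$ on $\N^1(X)$ and $f$ is finite surjective. Condition $(4)$ then yields an $\R$-Cartier class $L\in \Amp(X)$ with $f^*L-L\in \Amp(X)$, and a standard perturbation inside the open ample cone produces an integral Cartier divisor $L_0$ with $L_0$ and $f^*L_0-L_0$ both ample, hence int-amplification of $f$. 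The main obstacle I anticipate is verifying the cone-theoretic properties of $\PE_{n-1}(X)$ (salience, spanning, and the equality $\varphi(\PE_{n-1}(X))=\PE_{n-1}(X)$); once these are in hand, the argument is essentially parallel to the proof of Theorem \ref{main-thm-cri}.
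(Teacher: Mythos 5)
Your proposal is correct and takes essentially the same route as the paper, which presents Theorem \ref{thm-cri2} as a direct analogue of Theorem \ref{main-thm-cri} obtained by replacing $\N^1(X)$ with $\N_{n-1}(X)$ and $\PE^1(X)$ with $\PE_{n-1}(X)$, without spelling out the details you supply (salience and $\varphi$-invariance of $\PE_{n-1}(X)$, the use of $\Amp(X)\hookrightarrow\N_{n-1}(X)$ for $(4)\Rightarrow(1)$, and the rational perturbation to recover an integral Cartier divisor).
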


The following lemmas are easy applications but indispensable for us to run equivariant MMP step by step.

\begin{lemma}\label{lem-int-des1} Let $\pi:X\to Y$ be a surjective morphism of projective varieties.
Let $f:X\to X$ and $g:Y\to Y$ be two surjective endomorphisms such that $g\circ\pi=\pi\circ f$.
Suppose $f$ is int-amplified. Then $g$ is int-amplifed.
\end{lemma}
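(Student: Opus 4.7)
The plan is to apply the eigenvalue criterion, Theorem \ref{main-thm-cri}(2), and reduce the statement about $g$ on $Y$ to the known statement about $f$ on $X$ by pulling back through $\pi$. The commutation $g\circ\pi=\pi\circ f$ pulls back to
\[
\pi^{\ast}\circ g^{\ast} \;=\; f^{\ast}\circ \pi^{\ast}
\]
on $\N^{1}$, so $V:=\pi^{\ast}\N^{1}(Y)\subseteq \N^{1}(X)$ is an $f^{\ast}$-invariant subspace, and the action of $f^{\ast}|_{V}$ is intertwined by $\pi^{\ast}$ with the action of $g^{\ast}|_{\N^{1}(Y)}$.

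The first step is to verify that $\pi^{\ast}:\N^{1}(Y)\to \N^{1}(X)$ is injective. Given any irreducible curve $C\subseteq Y$, the surjectivity of $\pi$ allows one to pick an irreducible component $W_{0}$ of $\pi^{-1}(C)$ that dominates $C$; intersecting $W_{0}$ with sufficiently general hyperplane sections yields an irreducible curve $C'\subseteq X$ mapping finitely onto $C$, hence $\pi_{\ast}C'=mC$ for some $m>0$. By the projection formula, $\pi^{\ast}L\cdot C'=m\,L\cdot C$, so $\pi^{\ast}L\equiv 0$ forces $L\cdot C=0$ for every curve $C\subseteq Y$, i.e.\ $L\equiv 0$.

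Given injectivity, $\pi^{\ast}$ identifies $g^{\ast}|_{\N^{1}(Y)}$ with $f^{\ast}|_{V}$ as linear operators, so the eigenvalues of $g^{\ast}|_{\N^{1}(Y)}$ form a subset of the eigenvalues of $f^{\ast}|_{\N^{1}(X)}$. Since $f$ is int-amplified, Theorem \ref{main-thm-cri} implies that every eigenvalue of $f^{\ast}|_{\N^{1}(X)}$ has modulus $>1$; therefore the same is true for $g^{\ast}|_{\N^{1}(Y)}$. Applying the converse direction of Theorem \ref{main-thm-cri} to $g$ then shows that $g$ is int-amplified.

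The argument has no real obstacle: the only non-formal ingredient is the injectivity of $\pi^{\ast}$ on $\N^{1}$, which is a standard consequence of surjectivity of $\pi$ together with the projection formula, and everything else is an immediate application of Theorem \ref{main-thm-cri}. An alternative route would be to use criterion (3) directly by producing a big class $B$ on $Y$ with $g^{\ast}B-B$ big, but this is less transparent than the eigenvalue argument because pullbacks of ample divisors from $Y$ need only be big and nef (not ample) on $X$, so the ampleness criterion (1) does not pass back to $Y$ as cleanly.
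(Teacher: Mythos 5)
Your proof is correct and follows essentially the same route as the paper's: apply Theorem \ref{main-thm-cri}(2) to $f$, note that $\pi^{\ast}$ is injective and intertwines $g^{\ast}$ with $f^{\ast}$ on the invariant subspace $\pi^{\ast}\N^{1}(Y)$, so every eigenvalue of $g^{\ast}|_{\N^{1}(Y)}$ is also one of $f^{\ast}|_{\N^{1}(X)}$, then apply the criterion again to $g$. The only difference is that you spell out the (standard) proof that $\pi^{\ast}$ is injective, whereas the paper simply cites this fact.
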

\begin{proof} By Theorem \ref{main-thm-cri}, all the eigenvalues of $f^*|_{\N^1(X)}$ are of modulus greater than $1$ and hence so are all the eigenvalues of $g^*|_{\N^1(Y)}$ since $\pi^*:\N^1(Y)\to \N^1(X)$ is injective.
By Theorem \ref{main-thm-cri} again, $g$ is int-amplified.
\end{proof}

\begin{lemma}\label{lem-int-des2} Let $\pi:X\dasharrow Y$ be a generically finite dominant rational map of projective varieties.
Let $f:X\to X$ and $g:Y\to Y$ be two surjective endomorphisms such that $g\circ\pi=\pi\circ f$.
Then $f$ is int-amplified if and only if so is $g$.
\end{lemma}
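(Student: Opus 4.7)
The plan is to reduce to the genuine morphism case covered by Lemma \ref{lem-int-des1} by passing to a resolution of the graph of $\pi$. First I would let $\Gamma \subseteq X \times Y$ be the graph of $\pi$ and take $\nu: W \to \Gamma$ to be the normalization; composing $\nu$ with the two projections gives morphisms $p: W \to X$ (birational) and $q: W \to Y$ (generically finite and surjective).

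The first key observation is that the relation $g\circ \pi = \pi\circ f$ forces $f\times g$ to preserve $\Gamma$: on the open subset where $\pi$ is defined, $(f\times g)(x,\pi(x)) = (f(x),\pi(f(x))) \in \Gamma$. The restriction $h := (f\times g)|_{\Gamma}$ is thus a dominant self-morphism of the irreducible projective variety $\Gamma$, hence surjective. Since $W$ is normal and $h\circ\nu : W \to \Gamma$ is dominant, the universal property of normalization yields a unique surjective endomorphism $\tilde f : W \to W$ with $\nu \circ \tilde f = h\circ \nu$, and hence $p\circ \tilde f = f\circ p$ and $q\circ \tilde f = g\circ q$.

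With $\tilde f$ in hand, Lemma \ref{lem-int-des1} applied to $p$ and $q$ immediately shows that if $\tilde f$ is int-amplified then both $f$ and $g$ are int-amplified. For the converses, which carry the only real content, I would use the bigness criterion Theorem \ref{main-thm-cri}(3). Suppose $f$ is int-amplified and pick an ample $\R$-Cartier divisor $L$ on $X$ with $H := f^{\ast}L - L$ ample. Since $p$ is birational (hence generically finite and surjective), both $p^{\ast}L$ and $p^{\ast}H$ are big on $W$, and
$$\tilde f^{\ast}(p^{\ast}L) - p^{\ast}L \;=\; p^{\ast}(f^{\ast}L - L) \;=\; p^{\ast}H$$
is big. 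Theorem \ref{main-thm-cri} then gives $\tilde f$ int-amplified, and Lemma \ref{lem-int-des1} applied to $q$ gives $g$ int-amplified. The same argument with $(L,H,p)$ replaced by an ample $L'$ on $Y$, $H' := g^{\ast}L' - L'$, and the generically finite surjective $q$ handles the direction $g$ int-amplified $\Rightarrow \tilde f$ int-amplified $\Rightarrow f$ int-amplified.

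The only subtle point is ensuring that $f\times g$ actually lifts to an honest endomorphism of the normal model $W$; once that lifting is established, everything else is a clean combination of Theorem \ref{main-thm-cri} with the standard fact that pullback of a big (respectively ample) $\R$-Cartier divisor by a generically finite surjective morphism remains big.
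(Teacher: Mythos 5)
Your proof is correct and follows essentially the same route as the paper: pass to the graph of $\pi$, lift $f\times g$ to an endomorphism there, use the two projections (both generically finite and surjective) to reduce to the morphism case, and then combine Lemma \ref{lem-int-des1} with the bigness criterion from Theorem \ref{main-thm-cri}(3) and the fact that pullback by a generically finite surjective morphism preserves bigness. Your extra step of normalizing the graph is a mild technical refinement that the paper elides, but the substance of the argument is the same.
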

\begin{proof}
Let $\Gamma$ be the graph of $\pi$ and denote by $p_X:\Gamma\to X$ and $p_Y:\Gamma\to Y$ be two projections.
Then there exists a surjective endomorphism $h:\Gamma\to \Gamma$ such that $p_X\circ h=f\circ p_X$ and $p_Y\circ h=g\circ p_Y$.
Note that $p_X$ and $p_Y$ are generically finite surjective morphisms.
Therefore, it suffices for us to consider the case when $\pi$ is a well-defined morphism.

One direction follows from Lemma \ref{lem-int-des1}.
Suppose $H:=g^*L-L$ is ample for some ample Cartier divisor $L$ on $Y$.
Then $\pi^*L$ is big and $f^*(\pi^*L)-\pi^*L=\pi^*H$ is big.
By Theorem \ref{main-thm-cri}, $f$ is int-amplified.
\end{proof}

\begin{proof}[Proof of Theorem \ref{main-thm-comp}]
Fix a norm on $\N^1(X)$.
Denote by $\varphi_f:=f^*|_{\N^1(X)}$ and $\varphi_g:=g^*|_{\N^1(X)}$.
Since $f$ is int-amplified, all the eigenvalues of $\varphi_f^{-1}$ are of modulus less than $1$ by Theorem \ref{main-thm-cri}.
Then $\lim\limits_{i\to+\infty}||\varphi_f^{-i}||^{\frac{1}{i}}<1$ and hence there exists some $i_0>0$, such that $||\varphi_f^{-i}||<\frac{1}{||\varphi_g^{-1}||}$ for $i\ge i_0$. 
Denote by $h=f^i\circ g$ with $i\ge i_0$ and $\varphi_h:=h^*|_{\N^1(X)}$.
Let $\frac{1}{r}$ be the spectral radius of $\varphi_h^{-1}$.
By a version of the Perron-Frobenius theorem (cf. \cite{Bi}),
$\varphi_h(v)=rv$ for some nonzero $v\in \N^1(X)$.
Note that $r||v||=||\varphi_h(v)||=||\varphi_g(\varphi_f^i(v))||>||v||$.
So $r>1$ and hence $h$ is int-amplified by Theorem \ref{main-thm-cri} again.
The similar argument works for $g\circ f^i$.
\end{proof}

\begin{proof}[Proof of Theorem \ref{main-thm-k}]
Denote by $\varphi:=f^*|_{\N_{n-1}(X)}$ and $C$ the cone of classes of effective Weil-$\R$ divisors in $\N_{n-1}(X)$.
Then $\varphi(C)=C$.
By the ramification divisor formula, we have the class $[f^*(-K_X)-(-K_X)]=[R_f]\in C$.
So Theorem \ref{thm-cri2} implies that $[-K_X]\in C$.
When $K_X$ is $\Q$-Cartier, the proof is similar.
\end{proof}

%Let $X$ be a projective variety.
%Denote by $\N^1_{\mathbb{C}}(X):=\NS(X)\otimes_{\Z}\mathbb{C}$.
%Denote by $\N_k(X)$ the space of $k$-cycles modulo weak numerical equivalence.
%%Denote by $\N^k_{\mathbb{C}}(X):=\N^1_{\mathbb{C}}(X)^{\otimes k}/V$, where $V:=\{Z\in \N^1_{\mathbb{C}}(X)^{\otimes k}\,|\, Z\cdot D_k=0 \text{ for any } D_k\in \N_k(X) \}$.
%%Note that $Z\in V$ implies $Z\cdot L_1\cdots L_{n-k}=0$ for any $L_i\in \N^1_{\mathbb{C}}(X)$.
%Let $f:X\to X$ be a surjective endomorphism.
%Then $f^*$ induces a linear invertible map of $\N^1_{\mathbb{C}}(X)$ and also an invertible linear map of $\N^r_{\mathbb{C}}(X)$.

Let $f:X\to X$ be a surjective endomorphism of a projective variety $X$ of dimension $n>0$.
Denote by $$\N_i^{\mathbb{C}}(X):=\N_i(X)\otimes_{\R}\mathbb{C}$$ and 
$$\N^k_{\mathbb{C}}(X):=\{\sum a x_1\cdots x_k\,|\, a\in\mathbb{C}, x_1,\cdots, x_k \text{ are Cartier divisors}\}/\equiv_w,$$
where $\sum a x_1\cdots x_k\equiv_w 0$ if $(\sum a x_1\cdots x_k)\cdot x_{k+1}\cdots x_n=0$ for any Cartier divisors $x_{k+1},\cdots, x_n$.
When $k=1$, $\N^1_{\mathbb{C}}(X)=\N^1(X)\otimes_{\mathbb{R}}\mathbb{C}$ by \cite[Lemma 3.2]{Zh-tams}.
Note that $f^*$ naturally induces an invertible linear map on $\N^k_{\mathbb{C}}(X)$.

The following result gives a useful bound on the spectral radius of $f^*|_{\N^k_{\mathbb{C}}(X)}$ for int-amplified $f$ which allows us to discuss the dynamics on the subvarieties later.

\begin{lemma}\label{lem-weak-e} Let $f:X\to X$ be an int-amplified endomorphism of a projective variety $X$ of dimension $n$.
Assume that $0<k<n$.
Then all the eigenvalues of $f^*|_{\N^k_{\mathbb{C}}(X)}$ are of modulus less than $\deg f$.
In particular, $\lim\limits_{i\to +\infty} \frac{(f^i)^*x}{(\deg f)^i}=0$ for any $x\in \N^k_{\mathbb{C}}(X)$.
\end{lemma}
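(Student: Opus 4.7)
The plan is to exploit the non-degenerate intersection pairing $\N^k_{\mathbb{C}}(X) \times \N^{n-k}_{\mathbb{C}}(X) \to \mathbb{C}$ (non-degenerate on both sides by the very definition of $\equiv_w$) to transpose the spectral problem on $\N^k_{\mathbb{C}}(X)$ into one on $\N^{n-k}_{\mathbb{C}}(X)$, and then to bound the latter spectrum from below via a symmetric-power estimate. Write $d := \deg f$. The first ingredient is the multiplicativity
\[
  f^*\alpha \cdot f^*\beta \;=\; d\,(\alpha \cdot \beta) \qquad \text{for all } \alpha \in \N^k_{\mathbb{C}}(X),\ \beta \in \N^{n-k}_{\mathbb{C}}(X),
\]
which follows from $f^*$ being a graded ring homomorphism on $\bigoplus_r \N^r_{\mathbb{C}}(X)$ together with the fact that $f^*$ acts on $\N^n_{\mathbb{C}}(X)\cong \mathbb{C}$ as multiplication by $d$ (this last point being the projection formula applied to an $n$-fold Cartier intersection).

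Now let $\lambda$ be any eigenvalue of $f^*|_{\N^k_{\mathbb{C}}(X)}$ with nonzero eigenvector $\alpha$. Non-degeneracy of the pairing supplies some $\beta_0$ with $\alpha \cdot \beta_0 \neq 0$, so the linear functional $\phi_\alpha(\beta) := \alpha \cdot \beta$ on $\N^{n-k}_{\mathbb{C}}(X)$ is nonzero; substituting $\alpha$ into the multiplicativity identity also forces $\lambda \neq 0$ (otherwise $d(\alpha \cdot \beta)=0$ for every $\beta$). The identity then rearranges to $\phi_\alpha \circ f^* = (d/\lambda)\,\phi_\alpha$, so $\phi_\alpha$ is a nonzero eigenvector for the transpose of $f^*|_{\N^{n-k}_{\mathbb{C}}(X)}$ with eigenvalue $d/\lambda$. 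Since an operator and its transpose share the same spectrum, $d/\lambda$ is itself an eigenvalue of $f^*|_{\N^{n-k}_{\mathbb{C}}(X)}$. It therefore suffices to show that every eigenvalue of $f^*|_{\N^{n-k}_{\mathbb{C}}(X)}$ has modulus strictly greater than $1$.

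For this last step, note that by the paper's definition of $\N^{n-k}_{\mathbb{C}}(X)$ as spanned by products of Cartier divisors, the natural symmetrization
\[
  \Sym^{n-k}\!\bigl(\N^1_{\mathbb{C}}(X)\bigr) \twoheadrightarrow \N^{n-k}_{\mathbb{C}}(X), \qquad L_1 \otimes \cdots \otimes L_{n-k} \mapsto L_1 \cdots L_{n-k},
\]
is a surjective $f^*$-equivariant linear map, the equivariance being the ring-homomorphism property of $f^*$. Consequently every eigenvalue of $f^*|_{\N^{n-k}_{\mathbb{C}}(X)}$ also appears as an eigenvalue of $\Sym^{n-k}(f^*|_{\N^1_{\mathbb{C}}(X)})$, i.e.\ as a product $\mu_{i_1}\cdots \mu_{i_{n-k}}$ of $n-k$ eigenvalues $\mu_j$ of $f^*|_{\N^1_{\mathbb{C}}(X)}$. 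By Theorem \ref{main-thm-cri} each $|\mu_j|>1$, and since $n-k \geq 1$, every such product has modulus $>1$. Combining with the duality step yields $|d/\lambda|>1$, i.e.\ $|\lambda|<d$.

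The ``in particular'' statement is then immediate: the spectral radius of $d^{-1} f^*|_{\N^k_{\mathbb{C}}(X)}$ is strictly less than $1$, so its iterates tend to $0$ as operators, forcing $(f^i)^* x / d^i \to 0$ for every $x$. The point most worth double-checking is the multiplicativity $f^*\alpha \cdot f^*\beta = d(\alpha \cdot \beta)$ in $\N^n_{\mathbb{C}}(X)$; after that, the argument is purely linear algebra on finite-dimensional spaces equipped with a non-degenerate pairing.
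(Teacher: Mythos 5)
Your proof is correct, and it takes a genuinely different route from the paper's, though both ultimately hinge on the same two ingredients: the top-degree identity $f^*|_{\N^n_{\mathbb{C}}(X)} = \deg f$ (projection formula) and the fact that every eigenvalue of $f^*|_{\N^1_{\mathbb{C}}(X)}$ has modulus $>1$ (Theorem \ref{main-thm-cri}). The paper argues by descending induction on $k$: starting from an eigenvector $x \in \N^k_{\mathbb{C}}(X)$ with eigenvalue $\mu$, it considers the quotient of $\N^1_{\mathbb{C}}(X)$ by the annihilator $\{v : x \cdot v \equiv_w 0\}$, extracts an eigenvalue $\lambda$ of $f^*$ there with a representative $y$ satisfying $x\cdot y \not\equiv_w 0$, and deduces that $\mu\lambda$ is an eigenvalue of $f^*|_{\N^{k+1}_{\mathbb{C}}(X)}$; iterating up to $k=n-1$ yields $|\mu|<\deg f$. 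You instead use non-degeneracy of the pairing $\N^k \times \N^{n-k} \to \mathbb{C}$ to transpose the problem in a single step, showing $d/\lambda$ is an eigenvalue on $\N^{n-k}_{\mathbb{C}}(X)$, and then bound the latter spectrum from below by the clean observation that $\N^{n-k}_{\mathbb{C}}(X)$ is an $f^*$-equivariant quotient of $\Sym^{n-k}\N^1_{\mathbb{C}}(X)$, whose eigenvalues are products of $n-k$ eigenvalues of $f^*|_{\N^1_{\mathbb{C}}(X)}$. Your version buys a more structural picture (the pairing duality and the symmetric-power surjection make visible why the bound holds without running an induction), at the small cost of needing to verify the two-sided non-degeneracy of the pairing and the well-definedness and equivariance of the symmetric-power map; the paper's version is more hands-on and avoids invoking those facts explicitly. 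Both are complete and correct; you also correctly rule out $\lambda=0$ by non-degeneracy, a point the paper sidesteps by noting $f^*$ is invertible on $\N^k_{\mathbb{C}}(X)$.
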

\begin{proof}
We show by induction on $k$ from $n-1$ to $1$.
Suppose $f^*x\equiv_w\mu x$ for some $\mu\ne 0$ and $0\neq x\in \N^k_{\mathbb{C}}(X)$.
Let $V:=\{v\in \N^1_{\mathbb{C}}(X)\,|\, x\cdot v\equiv_w 0\}$ be a subspace of $\N^1_{\mathbb{C}}(X)$.
By the projection formula, $f^*(V)=V$ and $V\subsetneq \N^1_{\mathbb{C}}(X)$.
So there exists some $y\in \N^1_{\mathbb{C}}(X)\backslash V$, such that $f^*y-\lambda y\in V$, where $\lambda$ is an eigenvalue of $f^*|_{\N^1_{\mathbb{C}}(X)}$.
Then $f^*(x\cdot y)\equiv_w \mu\lambda x\cdot y$ and hence $\mu\lambda$ is an eigenvalue of $f^*|_{\N^{k+1}_{\mathbb{C}}(X)}$.
By Theorem \ref{main-thm-cri}, $|\lambda|>1$.
If $k=n-1$, then $\mu\lambda=\deg f$ and hence $|\mu|<\deg f$.
If $k<n-1$, then $|\mu\lambda|<\deg f$ by induction and hence $|\mu|<\deg f$.

The last statement is clear.
\end{proof}

As an easy application, we can show that int-amplified endomorphisms are always non-isomorphic.

\begin{lemma}\label{lem-deg>1} Let $f:X\to X$ be an int-amplified endomorphism of a positive dimensional projective variety $X$.
Then $\deg f>1$.
\end{lemma}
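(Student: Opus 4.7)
The plan is to compute $\deg f$ directly from the definition of int-amplified. By hypothesis there exists an ample Cartier divisor $L$ such that $A:=f^{\ast}L-L$ is also ample. Set $n:=\dim X>0$. Since $f$ is a surjective endomorphism, the projection formula gives
$$(f^{\ast}L)^{n}=(\deg f)\cdot L^{n},$$
and $L^{n}>0$ because $L$ is ample on a positive-dimensional projective variety (Nakai--Moishezon). Thus proving $\deg f>1$ reduces to showing $(f^{\ast}L)^{n}>L^{n}$.

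To do this, I would use $f^{\ast}L\equiv L+A$ numerically and expand via the binomial theorem,
$$(f^{\ast}L)^{n}=(L+A)^{n}=\sum_{i=0}^{n}\binom{n}{i}\,L^{i}\cdot A^{n-i}.$$
Each mixed intersection number $L^{i}\cdot A^{n-i}$ of two ample classes on an $n$-dimensional projective variety is strictly positive (an inductive consequence of Kleiman's ampleness criterion: restrict to a general member of $|mL|$ for $m\gg 1$ and proceed by induction on $n$). Hence the sum is strictly larger than the single term $L^{n}$, and dividing by $L^{n}>0$ yields $\deg f>1$.

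There is essentially no serious obstacle: the argument relies only on the definition of int-amplified, the projection formula, and strict positivity of mixed intersections of ample classes, and uses neither Theorem \ref{main-thm-cri} nor Lemma \ref{lem-weak-e}. (Alternatively, one could argue via Theorem \ref{main-thm-cri}(2): the top self-intersection pairing identifies $\N^{n}_{\mathbb{C}}(X)$ with $\mathbb{C}$ and $f^{\ast}$ on it acts by multiplication by $\deg f$; combined with the fact that all eigenvalues of $f^{\ast}$ on $\N^{1}(X)$ have modulus $>1$, one recovers $\deg f>1$. But the direct binomial expansion above is both shorter and more self-contained.)
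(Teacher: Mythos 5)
Your proof is correct, and it takes a genuinely different and more elementary route than the paper's. The paper deduces the lemma from its spectral machinery: it invokes Theorem~\ref{main-thm-cri} to get that all eigenvalues of $f^*|_{\N^1(X)}$ have modulus $>1$, treats $\dim X=1$ as a trivial base case, and then applies Lemma~\ref{lem-weak-e} with $k=1$, which is itself a nontrivial inductive bound on the spectral radii of $f^*$ on the higher intersection spaces $\N^k_{\mathbb{C}}(X)$. Your argument bypasses all of that: the identity $f^*L=L+A$ with both summands ample, the binomial expansion, strict positivity of mixed intersection numbers of ample classes (which is really a consequence of Nakai--Moishezon together with the general-hyperplane-section induction you sketch, rather than of Kleiman's criterion, though the fact itself is completely standard), and the projection formula $(f^*L)^n=(\deg f)L^n$ give the conclusion in one line, uniformly in the dimension. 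What your approach buys is brevity and self-containment; what the paper's approach buys is consistency with its overall cone-theoretic framework, since Lemma~\ref{lem-weak-e} is established anyway and is reused in Lemma~\ref{lem-lim-int}, which is needed elsewhere in the paper (singularities, equivariant MMP). One small caution about your parenthetical ``alternative'': knowing all eigenvalues of $f^*|_{\N^1(X)}$ exceed $1$ in modulus does not by itself immediately yield $\deg f>1$ without additional work relating $\deg f$ to those eigenvalues through the intersection product -- that bridging step is precisely what Lemma~\ref{lem-weak-e} supplies -- so your main binomial argument is the cleaner route.
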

\begin{proof} It is trivial when $\dim(X)=1$.
Assume that $\dim(X)>1$.
By Theorem \ref{main-thm-cri}, all the eigenvalue of $f^*|_{\N^1(X)}$ are of modulus greater than $1$.
Therefore, $\deg f>1$ by applying Lemma \ref{lem-weak-e} for $k=1$.
\end{proof}

Another application is the following lemma concerning the action $f^*|_{\N_k(X)}$ which plays an essential role in our generalization about the singularity and equivariant MMP.
\begin{lemma}\label{lem-lim-int} Let $f:X\to X$ be an int-amplified endomorphism of a projective variety $X$ of dimension $n>0$.
Let $Z$ be a $k$-cycle of $X$ with $k<n$.
Let $H$ be an ample Cartier divisor on $X$.
Then $\lim\limits_{i\to +\infty} Z\cdot\frac{(f^i)^*(H^k)}{(\deg f)^i}=0$.
\end{lemma}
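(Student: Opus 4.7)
The plan is to interpret the intersection $Z\cdot(\cdot)$ as a continuous linear functional on the finite-dimensional complex vector space $\N^k_{\mathbb{C}}(X)$, and then to invoke the ``in particular'' clause of Lemma \ref{lem-weak-e} applied to $x=H^k$.

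By bilinearity of intersection in $Z$, I first reduce to the case where $Z$ is an irreducible closed subvariety of dimension $k$. Let $i:Z\hookrightarrow X$ be the inclusion and $\pi:\tilde{Z}\to Z$ a resolution. For Cartier divisors $D_1,\dots,D_k$ on $X$, the intersection number $Z\cdot D_1\cdots D_k$ equals $\int_{\tilde{Z}} \pi^*i^*D_1\cdots\pi^*i^*D_k$, hence depends only on the numerical classes $[D_j]\in\N^1(X)$. This produces a symmetric $k$-multilinear form on $\N^1(X)$; after complexifying and passing through the canonical surjection $\mathrm{Sym}^k\N^1_{\mathbb{C}}(X)\twoheadrightarrow\N^k_{\mathbb{C}}(X)$, I aim to promote it to a linear functional $\rho_Z:\N^k_{\mathbb{C}}(X)\to\mathbb{C}$ with $\rho_Z(\alpha)=Z\cdot\alpha$.

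Once $\rho_Z$ is well-defined, continuity is automatic in the finite-dimensional setting, and the conclusion is immediate: Lemma \ref{lem-weak-e} gives $(f^i)^*(H^k)/(\deg f)^i\to 0$ in $\N^k_{\mathbb{C}}(X)$, so applying $\rho_Z$ yields
$$\lim_{i\to+\infty}\frac{Z\cdot(f^i)^*(H^k)}{(\deg f)^i}=\rho_Z(0)=0.$$

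The main obstacle is showing that $\rho_Z$ descends through the quotient $\mathrm{Sym}^k\N^1_{\mathbb{C}}(X)\twoheadrightarrow\N^k_{\mathbb{C}}(X)$: one must check that $Z\cdot\alpha=0$ whenever $\alpha\cdot y_1\cdots y_{n-k}=0$ for all Cartier divisors $y_j$ on $X$. Equivalently, one must verify that the degree of the $0$-cycle class $\pi^*i^*\alpha$ on the $k$-dimensional resolution $\tilde{Z}$ vanishes whenever $\alpha$ is weakly numerically trivial on $X$; this is the technical heart of the argument and is to be handled using the cycle-level properties of weak numerical equivalence set up in \cite[\S2]{MZ}.
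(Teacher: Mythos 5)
Your argument hinges on a descent claim that does not follow from the cited material, and it is precisely the point the paper's proof is designed to avoid. To make $\rho_Z$ a well-defined linear functional on the quotient $\N^k_{\mathbb{C}}(X)$ you need: if $\alpha$ is a $\mathbb{C}$-linear combination of $k$-fold products of Cartier divisor classes with $\alpha\cdot y_{k+1}\cdots y_n=0$ for all Cartier divisors $y_j$ (i.e.\ $\alpha\equiv_w 0$), then $Z\cdot\alpha=0$ for every $k$-cycle $Z$. This is not what the definitions in \cite[\S2]{MZ} supply. The relation $\equiv_w$ on $\N^k_{\mathbb{C}}(X)$ only tests $\alpha$ against complete-intersection-type $k$-cycles $y_{k+1}\cdots y_n$; for $0<k<n$ a general irreducible $k$-dimensional subvariety need not be weakly numerically a combination of such cycles, so nothing forces $Z\cdot\alpha$ to vanish when $\alpha\equiv_w 0$. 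Without this descent, $\rho_Z$ is not a function on $\N^k_{\mathbb{C}}(X)$ and your continuity argument collapses at its first step; asserting that it is ``handled using the cycle-level properties of weak numerical equivalence'' does not close the gap.

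The paper sidesteps this by trading linearity for positivity. After reducing to $Z$ effective and disposing of $k=0$ via Lemma~\ref{lem-deg>1}, one sets $x_i:=(f^i)^*(H^k)/(\deg f)^i$ and observes two things: $x_i\to 0$ in $\N^k_{\mathbb{C}}(X)$ by Lemma~\ref{lem-weak-e}, and, because $H$ is ample, $x_i\cdot w_e\ge 0$ for every effective $k$-cycle $w_e$. Lemma~\ref{lem-int-hyp} then supplies, for an irreducible $k$-dimensional $W$, hypersurfaces with $y_1\cdots y_{n-k}=W+W'$ with $W'$ effective. The convergence $x_i\cdot y_1\cdots y_{n-k}\to 0$ holds simply because pairing against products of $n-k$ Cartier divisors is, by definition, what the topology on $\N^k_{\mathbb{C}}(X)$ detects, so no descent of a general cycle pairing is required. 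The squeeze $0\le x_i\cdot W\le x_i\cdot(W+W')=x_i\cdot y_1\cdots y_{n-k}\to 0$ (this is Lemma~\ref{lem-wts}) finishes the argument. Your plan has no substitute for this positivity step, and the gap it leaves is essential, not cosmetic.
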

\begin{proof} We may assume $Z$ is effective.
If $k=0$, $\lim\limits_{i\to +\infty} Z\cdot\frac{(f^i)^*(H^k)}{(\deg f)^i}=\lim\limits_{i\to +\infty} \frac{|Z|}{(\deg f)^i}=0$ by Lemma \ref{lem-deg>1}.
Suppose $k>0$.
Let $x_i:=\frac{(f^i)^*(H^k)}{(\deg f)^i}\in \N^k_{\mathbb{C}}(X)$.
By Lemma \ref{lem-weak-e}, $\lim\limits_{i\to +\infty} x_i=0$ in $\N^k_{\mathbb{C}}(X)$.
Since $H$ is ample, $x_i\cdot w_e\ge 0$ for any effective $k$-cycle $w_e$. 
So the lemma follows from Lemma \ref{lem-wts}.
\end{proof}

\begin{lemma}\label{lem-wts} Let $X$ be a projective variety of dimension $n$. Suppose $x_i\in \N^k_{\mathbb{C}}(X)$  with $0<k<\dim(X)$ such that $x\cdot w_e\ge 0$ for any non-zero effective $k$-cycle $w_e$.
Suppose further $\lim\limits_{i\to +\infty} x_i=0$ in $\N^k_{\mathbb{C}}(X)$.
Then for any $w\in \N_i^{\mathbb{C}}(X)$, $\lim\limits_{i\to +\infty} x_i\cdot w=0$.
\end{lemma}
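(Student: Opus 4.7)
My plan is to reduce the claim to pairing $x_i$ with the class of a single irreducible $k$-dimensional subvariety $W$, and then to sandwich $x_i\cdot[W]$ between $0$ (from the positivity hypothesis) and the pairing of $x_i$ with a generic complete intersection of $n-k$ Cartier divisors that all vanish along $W$. The upper bound will tend to $0$ because it is the pairing of $x_i$ with a class in $\N^{n-k}_{\mathbb{C}}(X)$ coming from a product of Cartier divisors, while $x_i\to 0$ in $\N^k_{\mathbb{C}}(X)$ by hypothesis.

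First I would perform the reduction. Any $k$-cycle on $X$ is, by definition, a $\mathbb{Z}$-linear combination of irreducible $k$-dimensional subvarieties, so any element of $\N_k^{\mathbb{C}}(X)$ can be written as $w=\sum_j c_j\,[W_j]$ with $c_j\in\mathbb{C}$ and $W_j\subset X$ irreducible of dimension $k$. By $\mathbb{C}$-linearity it therefore suffices to show that $x_i\cdot[W]\to 0$ for a single irreducible $k$-dimensional subvariety $W$.

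Fix such a $W$ and a very ample Cartier divisor $H$ on $X$. For $m\gg 0$, Serre vanishing gives that $\mathcal{I}_W\otimes\mathcal{O}_X(mH)$ is globally generated, so I can choose $n-k$ general members $D_1,\ldots,D_{n-k}\in |mH|$ each containing $W$. A standard Bertini-type argument (using $\dim W=k<n$) should guarantee that $D_1\cap\cdots\cap D_{n-k}$ has pure dimension $k$ and that $W$ occurs as one of its components, so that as $k$-cycles on $X$ one obtains a decomposition
$$D_1\cdot D_2\cdots D_{n-k}=\mu\,[W]+Z,$$
with $\mu\geq 1$ an integer and $Z$ an effective $k$-cycle collecting the residual components. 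The positivity hypothesis then gives $x_i\cdot[W]\geq 0$ and $x_i\cdot Z\geq 0$, yielding the sandwich
$$0\leq \mu\,(x_i\cdot[W])\leq x_i\cdot(D_1\cdots D_{n-k}).$$
Because the right-hand side is the pairing of $x_i$ with a class in $\N^{n-k}_{\mathbb{C}}(X)$ coming from a product of Cartier divisors, it converges to $0$ by the hypothesis $x_i\to 0$ in $\N^k_{\mathbb{C}}(X)$. Dividing by $\mu>0$ then completes the proof.

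The main obstacle I anticipate is the Bertini step, namely guaranteeing that a general intersection of $n-k$ members of $|mH|$ vanishing along $W$ really is pure of dimension $k$ and contains $W$ as a top-dimensional component with positive multiplicity. In characteristic zero this should be routine on any projective variety $X$ and even allows $X$ to be singular, but it is the only place where genuine geometric input (beyond Serre vanishing) is needed; everything else is pure linearity, the positivity hypothesis, and the sandwich argument.
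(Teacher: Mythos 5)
Your proof is correct and follows essentially the same route as the paper: reduce to an irreducible $k$-dimensional subvariety $W$, realize a positive multiple of $[W]$ plus an effective residual $k$-cycle as a product of $n-k$ Cartier divisors through $W$, and then sandwich $x_i\cdot[W]$ between $0$ and $x_i\cdot D_1\cdots D_{n-k}\to 0$ using the positivity hypothesis on both pieces. The only difference is cosmetic: you produce the linking hypersurfaces via Serre vanishing and a Bertini argument, while the paper's Lemma~\ref{lem-int-hyp} gives an elementary explicit construction, choosing homogeneous polynomials in the ideal of $W$ one at a time so that each successive hypersurface avoids the residual components already encountered.
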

\begin{proof} We may assume that $w$ represents the class of some irreducible closed subvariety. By Lemma \ref{lem-int-hyp}, $w+w'=y_1\cdots y_{n-k}$ for some effective $k$-cycle $w'$ and hypersurfaces $y_1,\cdots, y_{n-k}$.
So $0\le \lim\limits_{i\to +\infty} x_i\cdot w\le \lim\limits_{i\to +\infty} x_i\cdot(w+w')=\lim\limits_{i\to +\infty} x_i\cdot y_1\cdots y_{n-k}=0$.
\end{proof}

\begin{lemma}\label{lem-int-hyp} Let $X$ be a projective variety of dimension $n$.
Let $W$ be an $m$-dimensional closed subvariety of $X$ with $m<n$.
Then there exist hypersurfaces $H_1,\cdots, H_{n-m}$ such that $\bigcap_{i=1}^{n-m}H_i$ is of pure dimension $m$ and $W$ is an irreducible component of $\bigcap_{i=1}^{n-m}H_i$.
In particular, the intersection $H_1\cdots H_{n-m}=W+W'$ for some effective $m$-cycle $W'$.
\end{lemma}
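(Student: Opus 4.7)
The plan is to proceed by induction on the codimension $n-m$. Embedding $X$ as a closed subvariety of some $\mathbb{P}^N$, any hypersurface of $\mathbb{P}^N$ not containing $X$ cuts $X$ in a hypersurface, so it suffices to produce suitable hypersurfaces of $\mathbb{P}^N$. The two ingredients that do all the work are Krull's Hauptidealsatz --- every irreducible component of a $k$-fold hypersurface intersection in $X$ has codimension at most $k$ --- together with the fact that for $d\gg 0$ the ideal sheaf $\mathcal{I}_W(d)$ on $\mathbb{P}^N$ is globally generated, so one can find hypersurfaces of $\mathbb{P}^N$ vanishing on $W$ while avoiding any prescribed finite set of points disjoint from $W$.

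The inductive statement I would prove is: for each $0\le k\le n-m$ there exist hypersurfaces $H_1,\dots,H_k$ of $X$ such that $\bigcap_{i=1}^k H_i$ has pure dimension $n-k$ and admits an irreducible component $W_k$ containing $W$. The base case $k=0$ is trivial with $W_0=X$. For the inductive step, suppose $H_1,\dots,H_k$ have been constructed and list the irreducible components of $\bigcap_{i=1}^k H_i$ as $W_k,V_1,\dots,V_s$. When $k<n-m$, each such component has dimension $n-k>m=\dim W$, so none is contained in $W$; I choose a point $p_0\in W_k\setminus W$ and points $p_j\in V_j\setminus W$. Then I pick $H_{k+1}$ to be (the restriction to $X$ of) a hypersurface of $\mathbb{P}^N$ of sufficiently large degree vanishing on $W$ but at none of the $p_j$. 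This $H_{k+1}$ contains $W$ but contains neither $W_k$ nor any $V_j$, so Hauptidealsatz forces $\bigcap_{i=1}^{k+1}H_i$ to have pure dimension $n-k-1$. Since $W\subseteq W_k\cap H_{k+1}$, some irreducible component $W_{k+1}$ of this new intersection contains $W$, closing the induction.

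After $n-m$ steps, $\bigcap_{i=1}^{n-m}H_i$ has pure dimension $m$ and has an irreducible component $W_{n-m}\supseteq W$; since $\dim W_{n-m}=m=\dim W$ and $W$ is irreducible, necessarily $W_{n-m}=W$, so $W$ is indeed an irreducible component. For the final assertion, the intersection cycle $H_1\cdots H_{n-m}$ is by construction a non-negative integer combination of the irreducible components of $\bigcap_{i=1}^{n-m}H_i$, in which $W$ appears with some multiplicity $c\ge 1$; writing out this decomposition and absorbing $(c-1)W$ into the remainder gives $H_1\cdots H_{n-m}=W+W'$ for some effective $m$-cycle $W'$.

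The only real obstacle is the bookkeeping in the inductive step: at each stage one must simultaneously ensure that $H_{k+1}$ passes through all of $W$ while missing at least one point of $W_k$ and of each $V_j$. This is precisely where the strict inequality $\dim V_j=n-k>m$ (valid as long as $k<n-m$) is essential, since it guarantees that the required auxiliary points $p_j\notin W$ exist. Everything else is routine dimension theory.
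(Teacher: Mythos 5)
Your proposal is correct and follows essentially the same strategy as the paper's proof: embed $X$ in $\mathbb{P}^N$ and inductively cut down by hypersurfaces that vanish on $W$ but avoid a point of each irreducible component of the partial intersection, with Krull's Hauptidealsatz giving purity of dimension at each step. The only (cosmetic) difference is in the avoidance step: you invoke global generation of $\mathcal{I}_W(d)$ for $d\gg 0$ (which, to hit finitely many points at once, still needs a generic linear combination or Serre vanishing), whereas the paper proves the same avoidance statement by hand via a linear-pencil argument over the infinite ground field.
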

\begin{proof} Let $X$ be a closed subvariety of $\mathbb{P}^N$ for some $N>0$.
Let $I$ be the homogeneous ideal of $W$ in $\mathbb{P}^N$.

Let $Y_1,\cdots, Y_s$ be irreducible closed subvarieties of $\mathbb{P}^N$ such that $W$ does not contain $Y_i$ for each $i$.
We first claim that there exists a homogenous polynomial $f\in I$ such that $Z(f)$ (zeros of $f$, not necessarily irreducible or reduced) does not contain $Y_i$ for each $i$.
Since $W$ does not contain $Y_1$, there exists some homogenous polynomial $f_1\in I$ such that $Z(f_1)$ does not contain $Y_1$.
Suppose we have found some homogenous polynomial $f_t\in I$ such that $Z(f_t)$ does not contain $Y_i$ for $i\le t$.
Since $W$ does not contain $Y_{t+1}$, there exists some homogenous polynomial $g_{t+1}\in I$ such that $Z(g_{t+1})$ does not contain $Y_{t+1}$.
We may assume $f_t$ and $g_{t+1}$ have the same degree by taking suitable powers.
If $Z(f_{t})$ does not contain $Y_{t+1}$, we set $f_{t+1}=f_t$.
Suppose $Y_{t+1}\subseteq Z(f_{t+1})$.
Let $k$ be the base field.
Consider $S_i:=\{a\in k\,|\, Y_{i}\subseteq Z(f_t+a g_{t+1}) \}$ for $i\le t+1$.
Note that $S_i$ has at most one element for each $i\le t+1$.
Since $k$ is infinite, there exists some $a\not\in \bigcup_{i=1}^{t+1}S_i$ and we set $f_{t+1}=f_t+a g_{t+1}$.
So the claim is proved.

By the above claim, we may first find some homogenous polynomial $h_1\in I$ such that $Z(h_1)$ does not contain $X$. Set $H_1:=Z(h_1)|_X$ the pullback of $Z(h_1)$ via the inclusion map $X\to \mathbb{P}^N$.
Then $H_1$ is a hypersurface of $X$.
Suppose that we have found hypersurfaces $H_1:=Z(h_1)|_X,\cdots, H_t:=Z(h_t)|_X$ such that $\bigcap_{i\le t}H_t$ is of pure dimension $n-t$ and contains $W$.
If $m=n-t$, then $W$ is an irreducible component of $\bigcap_{i\le t}H_t$ and hence we are done.
If $m<n-t$, then we may continue applying the above claim to all the irreducible components of $\bigcap_{i\le t}H_t$.
\end{proof}

Applying Lemma \ref{lem-lim-int} to the $f^{-1}$-invariant closed subvariety, we have
\begin{lemma}\label{lem-inv-deg} Let $f:X\to X$ be an int-amplified endomorphism of a projective variety $X$.
Let $Z$ be an $f^{-1}$-invariant closed subvariety of $X$ such that $\deg f|_Z=\deg f$.
Then $Z=X$.
\end{lemma}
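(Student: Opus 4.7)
My plan is to argue by contradiction and reduce the statement to a single application of Lemma \ref{lem-lim-int}. I would assume $Z \subsetneq X$, set $k := \dim Z < n := \dim X$, and view $Z$ as an effective $k$-cycle on $X$, which is exactly the setting in which that lemma can be applied.

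The heart of the argument is the following intersection-theoretic computation. First, $f$ is finite (since $f^*L = L + H$ is ample), so the $f^{-1}$-invariance of $Z$ makes $f|_Z \colon Z \to Z$ a finite surjective endomorphism; moreover, for a very general point $z \in Z$ one has $f^{-1}(z) \subseteq f^{-1}(Z) = Z$, whence $(f|_Z)^{-1}(z) = f^{-1}(z)$, which recovers the hypothesis $\deg(f|_Z) = \deg f$. Writing $\iota \colon Z \hookrightarrow X$ for the inclusion and using $f^i \circ \iota = \iota \circ (f|_Z)^i$, I would then have $\iota^* (f^i)^*(H^k) = ((f|_Z)^i)^*(H|_Z)^k$, so for an ample Cartier divisor $H$ on $X$ the projection formula gives
\[
Z \cdot (f^i)^*(H^k) \;=\; (\deg f|_Z)^i \cdot (H|_Z)^k \;=\; (\deg f)^i \cdot (H|_Z)^k.
\]
Since $H|_Z$ is ample on the $k$-dimensional variety $Z$, the number $(H|_Z)^k$ is strictly positive.

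To conclude, I would divide by $(\deg f)^i$ and observe that the ratio $Z \cdot (f^i)^*(H^k)/(\deg f)^i$ equals the fixed positive constant $(H|_Z)^k$ for every $i$. Because $k < n$, Lemma \ref{lem-lim-int} forces this ratio to tend to $0$, a contradiction; hence $k = n$ and $Z = X$. The only step demanding real attention is the intersection-theoretic identity displayed above: this is precisely where the $f^{-1}$-invariance is translated into a top self-intersection on $Z$ whose positivity persists in the limit and collides with the vanishing guaranteed by Lemma \ref{lem-lim-int}. Everything else is formal bookkeeping.
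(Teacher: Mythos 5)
Your proposal is correct and follows essentially the same route as the paper: both hinge on the projection-formula identity $Z\cdot(f^i)^*(H^k)=(\deg f)^i\,(H|_Z)^k$ (the paper phrases it via $f_*Z=(\deg f)\,Z$ on $X$, while you pull back along $\iota:Z\hookrightarrow X$ and compute on $Z$, but it is the same computation), and then both invoke Lemma \ref{lem-lim-int} to force the contradiction $0<(H|_Z)^k=0$. One small caution on a side remark: the equality $(f|_Z)^{-1}(z)=f^{-1}(z)$ for generic $z\in Z$ does \emph{not} by itself ``recover'' $\deg(f|_Z)=\deg f$, since $Z$ could lie in the branch locus of $f$; this is genuinely a hypothesis of the lemma, though your main argument correctly uses it only as such.
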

\begin{proof}
Let $m:=\dim(Z)$ and $d:=\deg f$.
Suppose $m<\dim(X)$.
Let $A$ be an ample Cartier divisor on $X$.
Then $Z\cdot f^*(A)^m=f_*Z\cdot A^m=dZ\cdot A^m$ by the projection formula.
By Lemma \ref{lem-lim-int}, we have $1\le Z\cdot A^m=\lim\limits_{i\to+\infty} Z\cdot\frac{(f^i)^*(A^m)}{d^i}=0$, a contradiction.
\end{proof}

Now we are able to apply \cite[Theorem 1.2]{BH} and show the singularity.
\begin{proof}[Proof of Theorem \ref{main-thm-lc}]
Suppose the contrary that $X$ is not lc.
Let $Z$ be an irreducible component the non-lc locus of $X$.
Since $f$ is int-amplified, $\deg f>1$ by Lemma \ref{lem-deg>1}.
Then $f^{-1}(Z)=Z$ and $\deg f|_Z=\deg f>1$ by \cite[Theorem 1.2]{BH}.
By Lemma \ref{lem-inv-deg}, $Z=X$, absurd.
\end{proof}

\section{$Q$-abelian case}
In this section, we will deal with the case of $Q$-abelian varieties admitting int-amplified endomorphisms.
Recall that a normal projective variety $X$ is $Q$-abelian if there exist an abelian variety $A$ and a finite surjective morphism $\pi:A\to X$ which is \'etale in codimension $1$.
As stated in Theorem \ref{main-thm-mmp}, $Q$-abelian varieties are the end products of the equivariant MMP and this will be proved in the next section.
The results discussed in this section will be used to show many rigidities, eg., Theorem \ref{main-thm-mrc}, Theorem \ref{main-thm-alb} and Theorem \ref{main-thm-mmp}(2).

First, we observe in the following two lemmas that there is no $f^{-1}$-periodic subvarieity except itself.
\begin{lemma}\label{lem-inv-abe}
Let $f:A\to A$ be an int-amplified endomorphism of an abelian variety $A$.
Let $Z$ be a (non-empty) $f^{-1}$-periodic closed subvariety of $A$.
Then $Z=A$.
\end{lemma}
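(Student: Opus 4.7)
The plan is to reduce the lemma to Lemma \ref{lem-inv-deg}, which already concludes $Z = A$ once one knows that $Z$ is $f^{-1}$-invariant and $\deg(f|_Z) = \deg f$. So the two things I would need to arrange are strict $f^{-1}$-invariance (rather than mere periodicity) and the degree equality.

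First, since $Z$ is only assumed to be $f^{-1}$-periodic, I would pass to a suitable positive iterate: choose $s > 0$ with $f^{-s}(Z) = Z$ and replace $f$ by $f^s$. The iterate $f^s$ is still int-amplified, because by Theorem \ref{main-thm-cri} all eigenvalues of $f^*|_{\N^1(A)}$ have modulus greater than $1$, and hence so do their $s$-th powers, which are the eigenvalues of $(f^s)^*|_{\N^1(A)}$. So the hypotheses of Lemma \ref{lem-inv-deg} are preserved by iteration and I may assume $f^{-1}(Z) = Z$ from the outset.

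Next, I would verify that $\deg(f|_Z) = \deg f$. Because $f : A \to A$ is surjective (hence finite), $f(Z) = f(f^{-1}(Z)) = Z$, so $f|_Z : Z \to Z$ is a surjective finite map. The relation $f^{-1}(Z) = Z$ forces the set-theoretic fiber $f^{-1}(z)$ to be contained in $Z$ for every $z \in Z$, so for a general $z \in Z$ the fiber of $f|_Z$ agrees with the fiber of $f$ (with the same multiplicities), giving $\deg(f|_Z) = \deg f$. Applying Lemma \ref{lem-inv-deg} to the $f^{-1}$-invariant subvariety $Z \subseteq A$ then yields $Z = A$.

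The main obstacle I expect is essentially bookkeeping: making sure that the fiber-degree comparison $\deg(f|_Z) = \deg f$ is correctly justified (that passing to $Z$ does not cause a drop in generic degree), and that invariance and surjectivity of $f|_Z$ are both in place before invoking Lemma \ref{lem-inv-deg}. It is worth noting that the abelian variety hypothesis plays no real role in the argument sketched above — the same proof works for any int-amplified surjective endomorphism of a projective variety — so the hypothesis is presumably included to fit the context in which the lemma will subsequently be used, rather than for intrinsic reasons.
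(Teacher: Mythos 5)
Your reduction to the case $f^{-1}(Z) = Z$ and your appeal to Lemma \ref{lem-inv-deg} are both fine, but the middle step has a genuine gap. The assertion that $f^{-1}(Z) = Z$ alone forces $\deg(f|_Z) = \deg f$ ``with the same multiplicities'' is false in general: when $Z$ lies in the ramification locus of $f$, multiplicity in a direction transverse to $Z$ is invisible to $f|_Z$, and the generic degree does drop. A concrete counterexample to both this step and your closing remark is $X = \mathbb{P}^2$, $f([x:y:z]) = [x^2:y^2:z^2]$ (polarized, hence int-amplified), and $Z = \{z = 0\}$: here $f^{-1}(Z) = Z$ set-theoretically, yet $\deg f = 4$ while $\deg(f|_Z) = 2$, and of course $Z \neq X$. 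So the lemma is genuinely false for arbitrary projective varieties, and the abelian-variety hypothesis is essential rather than cosmetic, contrary to what you assert at the end.

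The paper's proof supplies exactly the missing ingredient. On an abelian variety $A$ one has $K_A = 0$, so the ramification divisor formula forces $R_f = 0$, and then purity of branch locus makes $f$ \'etale. For an \'etale $f$ with $f^{-1}(Z) = Z$, every fiber $f^{-1}(z)$ with $z \in Z$ consists of $\deg f$ distinct points all lying in $Z$, which yields $\deg(f|_Z) = \deg f$ honestly, after which Lemma \ref{lem-inv-deg} finishes the argument. You need to insert this \'etaleness observation before the degree comparison; without it the appeal to Lemma \ref{lem-inv-deg} is not justified.
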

\begin{proof}
We may assume $Z$ is irreducible and $f^{-1}(Z)=Z$.
Note that $f$ is \'etale by the ramification divisor formula and the purity of branch loci.
Then $\deg f|_Z=\deg f$ and hence $Z=A$ by Lemma \ref{lem-inv-deg}.
\end{proof}

\begin{lemma}\label{lem-inv-qabe}
Let $f:X\to X$ be an int-amplified endomorphism of a $Q$-abelian variety.
Let $Z$ be a (non-empty) $f^{-1}$-periodic closed subset of $X$.
Then $Z=X$.
\end{lemma}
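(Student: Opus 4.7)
The plan is to reduce to Lemma \ref{lem-inv-abe} by lifting (a power of) $f$ to an endomorphism of the defining abelian cover of $X$. Let $\pi:A\to X$ be a finite surjective morphism \'etale in codimension $1$ with $A$ an abelian variety.

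First, since $f$ is int-amplified we have $f^*L=L+H$ ample for some ample $L$, so $f^*$ preserves ampleness and hence $f$ is finite. Thus $f^{-1}$ permutes the finitely many irreducible components of $Z$, and after replacing $f$ by a suitable positive power we may assume $f^{-1}(W)=W$ for every irreducible component $W$ of $Z$; in particular $Z$ itself is $f^{-1}$-invariant.

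Next I would show that, after a further replacement of $f$ by a positive power, there exists a surjective endomorphism $\tilde f:A\to A$ satisfying $\pi\circ\tilde f=f\circ\pi$. The idea is to form the normalisation $B$ of the fibre product $A\times_{X,\,f\pi}A$; since $\pi$ is \'etale in codimension $1$ and $A$ is smooth, each projection $B\to A$ is \'etale in codimension $1$ on the smooth variety $A$, and hence actually \'etale by Zariski--Nagata purity of the branch locus. Therefore every connected component of $B$ is an abelian variety isogenous to $A$, with degree over $A$ bounded by $\deg\pi$. Because $A$ has only finitely many isomorphism classes of such bounded-degree \'etale covers, iterating $f$ and selecting one component eventually yields a component isomorphic to $A$ together with the desired lift $\tilde f$.

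By Lemma \ref{lem-int-des2} applied to $\pi$, the lift $\tilde f$ is int-amplified. Set $Z':=\pi^{-1}(Z)$, a non-empty closed subset of $A$. Then
$$\tilde f^{-1}(Z')=\tilde f^{-1}(\pi^{-1}(Z))=\pi^{-1}(f^{-1}(Z))=\pi^{-1}(Z)=Z',$$
and after one more replacement of $f$ (hence of $\tilde f$) by a positive power we may assume $\tilde f^{-1}(W')=W'$ for every irreducible component $W'$ of $Z'$. Lemma \ref{lem-inv-abe} then forces $W'=A$, so $Z'=A$ and consequently $Z=\pi(Z')=X$. The main obstacle is the lifting step producing $\tilde f$; once this is in hand, the remaining argument is formal preimage bookkeeping together with the abelian-case lemma already proved.
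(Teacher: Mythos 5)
Your overall strategy is the same as the paper's: lift (a power of) $f$ to the quasi-\'etale abelian cover $A\to X$ and then apply Lemma \ref{lem-inv-abe} to $\pi^{-1}(Z)$. You correctly flag the lifting step as ``the main obstacle,'' but your sketch for it is not a proof, and that step is exactly where the paper relies on an external result, namely \cite[Corollary 8.2]{CMZ}.

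The gap is the sentence ``iterating $f$ and selecting one component eventually yields a component isomorphic to $A$.'' It is true that $A$ has only finitely many connected \'etale covers of each bounded degree (since $\pi_1(A)\cong\Z^{2g}$ has finitely many subgroups of any given finite index), but this does not imply that some power $f^i$ lifts to the \emph{originally given} cover $A$. In fundamental-group terms, after passing to the Galois closure so that $\Lambda:=\pi_1(A)\trianglelefteq\Gamma:=\pi_1^{(1)}(X)$ with finite quotient $G$, a lift of $f^i$ to $A$ requires $(f^i)_*(\Lambda)\subseteq\Lambda$; the sequence of images of $(f^i)_*(\Lambda)$ in $G$ is eventually periodic, but nothing forces it to reach the trivial subgroup. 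What \cite[Corollary 8.2]{CMZ} (resting on Greb--Kebekus--Peternell and Nakayama--Zhang) actually provides is a possibly \emph{different} quasi-\'etale abelian cover $A'\to X$ together with a lift of $f$ itself; it does not assert that $f$ lifts to an arbitrary prescribed $A$. You need to either invoke such a result or give a genuine construction of the appropriate cover; the fibre-product trick plus the vague ``iterate and select a component'' does not close the gap. Two smaller points: the paper first notes that $f$ is quasi-\'etale (immediate from $K_X\sim_{\Q}0$ via Theorem \ref{main-thm-k} and the ramification formula), which is a hypothesis of the cited lifting result, and your parenthetical ``each projection $B\to A$ is \'etale in codimension $1$'' is only clear for the projection that is the base change of $\pi$, the other projection needing quasi-\'etaleness of $f$. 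Once a lift is available, your remaining bookkeeping with $\pi^{-1}(Z)$ and Lemma \ref{lem-inv-abe} is correct and matches the paper.
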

\begin{proof}
Note that $X$ has only quotient singularities and $K_X\sim_{\Q}0$.
Then $f$ is quasi-\'etale by the ramification divisor formula.
By \cite[Corollary 8.2]{CMZ}, there exist a quasi-\'etale cover $\pi:A\to X$ and a surjective endomorphism $f_A:A\to A$ such that $f\circ\pi=\pi\circ f_A$.
By Lemma \ref{lem-int-des2}, $f_A$ is int-amplified.
Note that $f_A^{-s}(\pi^{-1}(Z))=\pi^{-1}(Z)$ for some $s>0$.
By Lemma \ref{lem-inv-abe}, $\pi^{-1}(Z)=A$ and hence $Z=X$.
\end{proof}

Now we state several rigidities.
The proof of the following lemma is the same as the proof of \cite[Lemma 5.2]{MZ} except that we apply Lemma \ref{lem-inv-qabe} instead of \cite[Lemma 4.7]{MZ}.
We rewrite it here for the reader's convenience.
\begin{lemma}\label{fibres-rc+irr} Let $\pi:X\to Y$ be a surjective morphism between normal projective varieties with connected fibres.
Let $f:X\to X$ and $g:Y\to Y$ be two int-amplified endomorphisms such that $g\circ\pi=\pi\circ f$.
Suppose that $Y$ is $Q$-abelian.
Then the following are true.
\begin{itemize}
\item[(1)]
All the fibres of $\pi$ are irreducible.
\item[(2)]
$\pi$ is equi-dimensional.
\item[(3)]
If the general fibre of $\pi$ is rationally connected, then all the fibres of $\pi$ are rationally connected.
\end{itemize}
\end{lemma}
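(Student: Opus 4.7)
The plan is to follow the three-step strategy of \cite[Lemma 5.2]{MZ}, replacing the use of \cite[Lemma 4.7]{MZ} by Lemma \ref{lem-inv-qabe}. The unifying template is: for each part, identify a closed ``bad locus'' $T\subseteq Y$, verify $g^{-1}(T)\subseteq T$, and form the descending chain $T\supseteq g^{-1}(T)\supseteq g^{-2}(T)\supseteq\cdots$, which stabilizes by Noetherianity to a closed set $T^\ast$ with $g^{-1}(T^\ast)=T^\ast$. Since $g$ is surjective, every $g^{-n}(T)$ is non-empty whenever $T$ is, so $T^\ast\neq\emptyset$; Lemma \ref{lem-inv-qabe} then forces $T^\ast=Y$. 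Hence, whenever $T\subsetneq Y$, we conclude $T=\emptyset$.

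For (2), I would first note that $f$ is automatically finite (a surjective endomorphism of a projective variety has zero-dimensional fibres by dimension count, hence is quasi-finite and proper). Therefore $f|_{\pi^{-1}(y)}\colon\pi^{-1}(y)\to\pi^{-1}(g(y))$ is finite surjective and $\dim\pi^{-1}(y)=\dim\pi^{-1}(g(y))$, so the upper semi-continuous loci $T_k:=\{y\in Y:\dim\pi^{-1}(y)\ge k\}$ are $g^{-1}$-invariant. Taking $k>\dim X-\dim Y$ makes $T_k\subsetneq Y$, and the template yields $T_k=\emptyset$. For (1), once $\pi$ is equi-dimensional, let $c(y)$ be the number of irreducible components of $\pi^{-1}(y)$; this is upper semi-continuous, and the finite surjective map $f|_{\pi^{-1}(y)}$ sends components onto components (each one hit), so $c(g(y))\le c(y)$, giving $g^{-1}(S_k)\subseteq S_k$ for $S_k:=\{y:c(y)\ge k\}$. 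To apply the template with $k=2$ I need $S_2\subsetneq Y$, i.e., the general fibre irreducible; this follows from Stein factorization, because under the hypotheses (characteristic zero, $Y$ normal, $\pi$ has connected fibres) the Stein factor $Y'\to Y$ is a universally bijective finite morphism to a normal variety, hence an isomorphism, so $\pi_\ast\mathcal{O}_X=\mathcal{O}_Y$ and the geometric generic fibre is geometrically irreducible.

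For (3), let $N:=\{y\in Y:\pi^{-1}(y)\text{ is not rationally connected}\}$. This is closed by the openness of the rationally connected locus for proper families in characteristic zero (after, if necessary, passing to a resolution of $X$). Since rational connectedness is preserved under surjective morphisms, the finite surjective map $f|_{\pi^{-1}(y)}$ yields $g^{-1}(N)\subseteq N$. Under the hypothesis that the general fibre is rationally connected we have $N\subsetneq Y$, and the template forces $N=\emptyset$. The hard part is the irreducibility step in (1): one must upgrade ``connected generic fibre'' to ``geometrically irreducible generic fibre'', which is precisely what the Stein-factorization argument above accomplishes, and which genuinely uses both $\operatorname{char}k=0$ and the normality of $Y$.
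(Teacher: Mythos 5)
Your overall template---locate a closed $g^{-1}$-invariant bad locus in the $Q$-abelian $Y$ and apply Lemma~\ref{lem-inv-qabe}---is exactly the paper's strategy, and the Noetherian descending-chain mechanism you spell out is a correct way to pass from $g^{-1}(T)\subseteq T$ to a non-empty closed $T^\ast$ with $g^{-1}(T^\ast)=T^\ast$. But there is a genuine gap at the one step that makes all three parts work: you assert, without proof, that $f|_{\pi^{-1}(y)}\colon\pi^{-1}(y)\to\pi^{-1}(g(y))$ is \emph{surjective}. Finiteness of $f$ only gives that $f(\pi^{-1}(y))$ is a closed subset of $\pi^{-1}(g(y))$, hence $\dim\pi^{-1}(y)\le\dim\pi^{-1}(g(y))$---the \emph{wrong} inequality. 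That yields forward invariance $g(T_k)\subseteq T_k$, not the backward invariance $g^{-1}(T_k)\subseteq T_k$ your template requires, and forward invariance is strictly weaker: the origin $\{0\}$ is preserved by multiplication by $n$ on an abelian variety $A$, yet $n_A^{-1}(\{0\})$ is the $n$-torsion group, so $\{0\}$ is not $n_A^{-1}$-invariant, and it is certainly not all of $A$. The same unproved surjectivity is tacitly used in your (1), where you need every component of $\pi^{-1}(g(y))$ to be ``hit,'' and in (3), where you need the image of an RC fibre to be the \emph{whole} fibre below rather than a proper closed subset.

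The paper devotes its opening paragraph precisely to this point: it proves $f(\pi^{-1}(y))=\pi^{-1}(g(y))$ by showing $f(\pi^{-1}(y))$ is both closed (as $f$ is finite, hence proper) and open in the connected fibre $\pi^{-1}(g(y))$---openness comes from intersecting $\pi^{-1}(g(y))$ with the open dense image $f\bigl(X\setminus\pi^{-1}(S)\bigr)$, where $S:=g^{-1}(g(y))\setminus\{y\}$, using that a finite surjective morphism onto the normal $X$ is open by going-down. This connectedness argument is the crux of the lemma and cannot be omitted. Two smaller points: your justification that a surjective endomorphism of a projective variety is finite (``dimension count'') is too quick as stated, although the conclusion is correct and standard; and the upper semi-continuity of the number of irreducible components of the fibres is neither needed nor clearly true in general---it suffices to take Zariski closures of the loci, since $g$ finite and open gives $g^{-1}(\overline{T})=\overline{g^{-1}(T)}$.
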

\begin{proof} First we claim that $f(\pi^{-1}(y))=\pi^{-1}(g(y))$ for any $y\in Y$.
Suppose there is a closed point $y$ of $Y$ such that $f|_{\pi^{-1}(y)}:\pi^{-1}(y)\to \pi^{-1}(g(y))$ is not surjective.
Let $S=g^{-1}(g(y))-\{y\}$.
Then $S\neq \emptyset$ and $U:=X-\pi^{-1}(S)$ is an open dense subset of $X$.
Since $f$ is an open map, $f(U)$ is an open dense subset of $X$. In particular, $f(U)\cap \pi^{-1}(g(y))$ is open in $\pi^{-1}(g(y))$. Note that $f(U)=(X-\pi^{-1}(g(y)))\cup f(\pi^{-1}(y))$. So $f(U)\cap \pi^{-1}(g(y))=f(\pi^{-1}(y))$ is open in $\pi^{-1}(g(y))$. Since $f$ is also a closed map, the set $f(\pi^{-1}(y))$ is both open and closed in the connected fibre $\pi^{-1}(g(y))$ and hence $f(\pi^{-1}(y))=\pi^{-1}(g(y))$.
So the claim is proved.

Let  $$\Sigma_1:=\{y\in Y\,|\, \pi^{-1}(y) \text{ is not irreducible}\}.$$
Note that $f(\pi^{-1}(y))=\pi^{-1}(g(y))$.
Then $g^{-1}(\Sigma_1)\subseteq \Sigma_1$ and hence  $g^{-1}(\overline{\Sigma_1})\subseteq \overline{\Sigma_1}$.
Since $\overline{\Sigma_1}$ is closed and has finitely many irreducible components, $g^{-1}(\overline{\Sigma_1})=\overline{\Sigma_1}$.
\textbf{By Lemma \ref{lem-inv-qabe}}, $\Sigma_1=\emptyset$.
So (1) is proved.

Let $$\Sigma_2:=\{y\in Y\,|\,\dim(\pi^{-1}(y))>\dim(X)-\dim(Y)\},$$ and
$$\Sigma_3:=\{y\in Y\,|\, \pi^{-1}(y) \text{ is not rationally connected}\}.$$

By (1), $\pi$ is equi-dimensional outside $\Sigma_2$.
Since $f$ is finite surjective, $g^{-1}(\Sigma_2)\subseteq \Sigma_2$.
By (1), all the fibres of $\pi$ outside $\Sigma_3$ are rationally connected.
Note that the image of a rationally connected variety is rationally connected.
So $g^{-1}(\Sigma_3)\subseteq \Sigma_3$.
Now the same reason above implies that $\Sigma_2=\emptyset$.
Similarly, $\Sigma_3=\emptyset$ if the general fibre of $\pi$ is rationally connected.
\end{proof}

We recall the following rigidity without dynamics.
\begin{lemma}\label{mor-q-abelian} (cf.~\cite[Lemma 5.3]{MZ}) Let $\pi:X\dashrightarrow Y$ be a dominant rational map between normal projective varieties.
Suppose that $(X,\Delta)$ is a klt pair for some effective $\mathbb{Q}$-divisor $\Delta$ and $Y$ is $Q$-abelian.
Suppose further that the normalization of the graph $\Gamma_{X/Y}$ is equi-dimensional over $Y$ (this holds when $k(Y)$ is algebraically closed in $k(X)$, $f: X \to X$ is int-amplified and
$f$ descends to some int-amplified $f_Y : Y \to Y$).
Then $\pi$ is a morphism.
\end{lemma}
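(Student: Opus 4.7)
The plan is to reduce the statement to the case where $Y$ is an abelian variety via the defining quasi-\'etale cover, and then to invoke the standard fact that any rational map from a klt projective variety to an abelian variety extends to a morphism. Write $\bar\Gamma$ for the normalization of the graph of $\pi$, with projections $p_X:\bar\Gamma\to X$ birational and $p_Y:\bar\Gamma\to Y$ equi-dimensional. Since $X$ is normal, proving that $\pi$ is a morphism is equivalent to showing that $p_X$ has no exceptional divisor.

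The main technical work lies in the reduction. Let $\tau:A\to Y$ be the finite quasi-\'etale cover by an abelian variety provided by the $Q$-abelian structure of $Y$. Take an irreducible component of $\bar\Gamma\times_Y A$ dominating both $\bar\Gamma$ and $A$ and normalize it to obtain $\bar\Gamma_A$, with resulting projections $\bar\Gamma_A\to\bar\Gamma$ (finite) and $\bar\Gamma_A\to A$ (still equi-dimensional, by base change combined with upper semi-continuity of fibre dimension). The branch locus of $\bar\Gamma_A\to\bar\Gamma$ lies inside the $p_Y$-preimage of the codimension-$\ge 2$ branch locus of $\tau$; by equi-dimensionality of $p_Y$, this preimage again has codimension $\ge 2$ in $\bar\Gamma$, so $\bar\Gamma_A\to\bar\Gamma$ is quasi-\'etale. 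Define $X_A$ as the normalization of $X$ in $k(\bar\Gamma_A)$. On the dense open $U\subset X$ where $p_X$ is an isomorphism, $X_A\to X$ is identified with the restriction of $\bar\Gamma_A\to\bar\Gamma$, hence is \'etale in codimension one; as $X\setminus U$ has codimension $\ge 2$, this forces $X_A\to X$ to be quasi-\'etale, and then $(X_A,\Delta_A)$ remains klt (where $\Delta_A$ is the crepant pullback of $\Delta$). The expected main obstacle is precisely this step: without the equi-dimensionality of $p_Y$, the branch of $\tau$ could pull back to a genuine divisor on $X$ and destroy the klt property upon passing to the cover, derailing the entire strategy. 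Replacing $X,Y,\pi$ by $X_A,A,\pi_A$, we reduce to the case $Y=A$ abelian.

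In this reduced setting, take a resolution $\mu:W\to X_A$ of the indeterminacy of $\pi_A$, so that $\tilde\pi:=\pi_A\circ\mu:W\to A$ is a morphism. Since $X_A$ is klt, every fibre of the projective birational morphism $\mu$ is rationally chain connected (a standard consequence of klt singularities), and as abelian varieties contain no rational curves, $\tilde\pi$ must contract each such fibre to a point. The rigidity lemma, applicable because $\mu_\ast\mathcal O_W=\mathcal O_{X_A}$, then factors $\tilde\pi$ through $\mu$, producing a morphism $X_A\to A$ and hence a morphism $X_A\to Y$. A standard Galois-closure argument (invariance under the Galois group on the dense preimage of the defined locus of $\pi$, extended to everywhere by separatedness of $Y$) descends this to a morphism $X\to Y$ extending $\pi$, as desired.
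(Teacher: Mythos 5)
Your argument correctly reproves the main implication of the lemma, but it is worth noting that it is not at all the content the paper actually supplies in its proof. The paper's proof of this lemma is essentially one line citing \cite[Lemma 5.3]{MZ} for the implication ``klt $X$, $Q$-abelian $Y$, equi-dimensional normalized graph $\Rightarrow$ $\pi$ a morphism,'' and devotes its entire length to the parenthetical remark: that the int-amplified endomorphism $f$, together with the hypotheses $k(Y)$ algebraically closed in $k(X)$ and $f$ descending to an int-amplified $f_Y$, forces the normalized graph to be equi-dimensional over $Y$. That is the genuinely new content in this paper, and it is established by lifting $f$ to an endomorphism $h$ of the normalized graph $W$, noting $h$ is int-amplified by Lemma \ref{lem-int-des2}, and invoking Lemma \ref{fibres-rc+irr}(2). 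Your write-up never touches this; you simply take the equi-dimensionality of $p_Y$ as given.

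What you have written instead is a self-contained proof of the cited \cite[Lemma 5.3]{MZ}, and it reads correctly: the reduction via the quasi-\'etale cover $A \to Y$ is sound (you correctly use the equi-dimensionality of $p_Y$ to keep $\bar\Gamma_A \to \bar\Gamma$ quasi-\'etale, so that $X_A \to X$ is quasi-\'etale and $(X_A,\Delta_A)$ stays klt), and the passage from $X_A \dashrightarrow A$ to a genuine morphism via Hacon--McKernan (rational chain connectedness of fibres of a birational morphism onto a klt pair), no rational curves on an abelian variety, and the rigidity lemma is exactly the standard argument. Two small blemishes: the preliminary claim that ``$\pi$ is a morphism iff $p_X$ has no exceptional divisor'' is not quite right (one wants $p_X$ finite, or equivalently, an isomorphism, since $X$ is normal); and you write ``since $X_A$ is klt'' where you mean ``since $(X_A,\Delta_A)$ is klt.'' Neither affects the logic. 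If you want your proof to serve as a full replacement for the paper's, you should add the (short) equivariant argument showing $p_Y$ is equi-dimensional under the bracketed hypotheses, since that is the point on which the lemma's applicability in this paper depends.
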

\begin{proof}
Note that the lemma is the same with \cite[Lemma 5.3]{MZ} except the argument in brackets.
Let $W$ be the normalization of $\Gamma_{X/Y}$ and denote by $p_X:W\to X$ and $p_Y:W\to Y$ be the two projections.
So we are left to prove that the argument in brackets implies that $p_Y$ is equi-dimensional.
In this situation, there exists a surjective endomorphism $h:W\to W$ such that $p_X\circ h=f\circ p_X$ and $p_Y\circ h=f_Y\circ p_Y$.
By Lemma \ref{lem-int-des2}, $h$ is int-amplified.
So $p_Y$ is equi-dimensional by Lemma \ref{fibres-rc+irr}.
\end{proof}

\section{$K_X$ pseudo-effective case}
In this section, we reduce $K_X$ pseudo-effective case to the $Q$-abelian case.
In this way, we are only left to deal with the case when $K_X$ is not pseudo-effective and the equivariant MMP; see Section \ref{sec-mmp}. 

We first recall the result below.
\begin{lemma}\label{lem-nak-gkp}(cf.~\cite[Lemma 3.3.1]{ENS}, \cite[Lemma 2.5]{Na-Zh} and \cite[Theorem 1.1]{GKP})
Let $f:X\to X$ be a non-isomorphic surjective endomorphism of a normal projective variety $X$.
Let $\theta_k:V_k\to X$ be the Galois closure of $f^k:X\to X$ for $k\ge 1$ and let $\tau_k:V_k\to X$ be the induced finite Galois covering such that $\theta_k=f^k\circ \tau_k$.
Then there exist finite Galois morphims $g_k, h_k: V_{k+1}\to V_k$ such that $\tau_k\circ g_k=\tau_{k+1}$ and $\tau_k\circ h_k=f\circ \tau_{k+1}$.
Suppose further that $X$ is klt and $f$ is quasi-\'etale.
Then $g_k$ and $h_k$ are \'etale when $k\gg 1$.
\end{lemma}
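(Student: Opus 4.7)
My plan is first to construct the tower $\{V_k\}$ together with the compatibility morphisms $g_k, h_k$ by purely function-field Galois theory (valid for any non-isomorphic surjective $f$), then to upgrade ``quasi-\'etale'' to ``\'etale'' under the additional klt and quasi-\'etale hypotheses via the Greb--Kebekus--Peternell theorem on maximally quasi-\'etale covers.

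For the construction, set $K := k(X)$ and $K_k := (f^k)^\ast K \subseteq K$; the extension $K/K_k$ is finite and separable. Let $M_k$ be its Galois closure inside a fixed algebraic closure of $K_k$, and take $V_k$ to be the normalization of $X$ in $M_k$ (regarding $X$ through $f^k$). This yields the finite Galois morphism $\theta_k : V_k \to X$, and the intermediate field $K$ produces the factorization $\theta_k = f^k \circ \tau_k$. The two equalities $f^{k+1} = f^k \circ f = f \circ f^k$ give two natural embeddings $M_k \hookrightarrow M_{k+1}$: the first because $K_{k+1} \subseteq K_k$, and the second because $f^\ast M_k$ is a Galois extension of $K_{k+1}$ sitting inside $M_{k+1}$. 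Dualizing yields finite morphisms $g_k, h_k : V_{k+1} \to V_k$ satisfying the stated compatibilities; both are Galois because the intermediate subextensions $M_k \subseteq M_{k+1}$ and $f^\ast M_k \subseteq M_{k+1}$ are normal.

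Assume next that $X$ is klt and $f$ is quasi-\'etale. Then $f^k$ is quasi-\'etale, so $\theta_k = f^k \circ \tau_k$ is quasi-\'etale; the ramification divisor formula applied to this composition forces $R_{\tau_k} = 0$, so $\tau_k$ is quasi-\'etale and each $V_k$ is klt (klt is preserved under quasi-\'etale covers of normal varieties). Applying the same formula to $\tau_{k+1} = \tau_k \circ g_k$ and $f \circ \tau_{k+1} = \tau_k \circ h_k$ shows that $g_k, h_k$ are quasi-\'etale between klt projective varieties. Now invoke \cite[Theorem 1.1]{GKP}: every klt normal projective variety $W$ admits a finite Galois quasi-\'etale cover $\widetilde W \to W$ such that every further quasi-\'etale cover of $\widetilde W$ is \'etale. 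The crucial dynamical input is that as $k$ grows, the tower $V_k$ eventually ``absorbs'' the finite GKP-layer above $X$, so that for $k \gg 1$ the varieties $V_k$ inherit the maximally quasi-\'etale property. Once this is achieved, the quasi-\'etale morphisms $g_k, h_k$ are automatically \'etale.

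The main obstacle is the assertion that the tower genuinely stabilizes in the maximally quasi-\'etale sense. This is the technical heart of the argument: one must simultaneously track the Galois groups $\Gal(M_k/K_k)$, their quotients corresponding to the finite GKP-layer over $X$, and the action of $f^\ast$ on the resulting correspondence, ultimately verifying that the Galois closures $V_k$ eventually exhaust the quasi-\'etale covers controlled by GKP. Degenerate cases---for instance when $f$ is already Galois, so that $\tau_k$ has bounded degree---must be handled separately by applying GKP directly to the stabilized limit cover rather than to $V_k$ itself. The careful bookkeeping is worked out in \cite[Lemma 2.5]{Na-Zh} and \cite[Lemma 3.3.1]{ENS}, and I would simply invoke those references at this step rather than reproducing the combinatorics.
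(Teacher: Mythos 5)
The paper offers no proof of this lemma; it is stated as a recalled result with citations to Nakayama's 2008 preprint, Nakayama--Zhang, and Greb--Kebekus--Peternell, and you end up citing the same sources for the crux. So at the level of logical content your write-up and the paper's are equivalent, and the extra scaffolding you supply is mostly sound: the function-field construction (normalizing $X$ in the Galois closure $M_k$ of $K/K_k$), the two embeddings $M_k \hookrightarrow M_{k+1}$ coming respectively from $K_{k+1} \subseteq K_k$ and from the isomorphism $f^\ast \colon K \to K_1$ carrying $M_k$ onto the Galois closure of $K_1/K_{k+1}$ inside $M_{k+1}$, the automatic Galoisness of $M_{k+1}/M_k$ and $M_{k+1}/f^\ast M_k$ because $M_{k+1}/K_{k+1}$ is Galois, and the reduction to ``$g_k,h_k$ quasi-\'etale between klt varieties'' via the ramification formula are all correct.

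Where your write-up goes genuinely astray is the ``absorption'' heuristic. You claim that for $k \gg 1$ the varieties $V_k$ become maximally quasi-\'etale in the sense of GKP and deduce the conclusion from that. But ``$g_k$ and $h_k$ are eventually \'etale'' is strictly weaker than ``$V_k$ is eventually maximally quasi-\'etale,'' and the stronger assertion can simply fail: in your own ``degenerate'' case where $\tau_k$ has bounded degree, $g_k$ is eventually an isomorphism (hence \'etale) for trivial reasons, yet $V_k$ stabilizes to some fixed quasi-\'etale cover of $X$ that has no reason to dominate the GKP cover or be maximally quasi-\'etale itself. What GKP actually provides is a single finite cover of $X$ beyond which quasi-\'etale is \'etale, and the arguments of \cite{ENS} and \cite{Na-Zh} use the recursive tower structure to show that the quasi-\'etale-but-not-\'etale discrepancy cannot recur infinitely often along $\{g_k\}$ and $\{h_k\}$. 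So your key citation is right, but the intermediate statement you interpose is the wrong stepping stone, and a reader following your outline literally would hit a false lemma exactly where you defer to the references.
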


For the result below, we follow the idea of \cite[Theorem 3.3]{Na-Zh} and rewrite the proof here.
\begin{theorem}\label{thm-kx-pe-qa} Let $f:X\to X$ be an int-amplified endomorphism of a klt normal projective variety $X$ with $K_X$ being pseudo-effective.
Then $X$ is $Q$-abelian.
\end{theorem}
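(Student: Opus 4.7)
The proof will follow the blueprint of \cite[Theorem 3.3]{Na-Zh}, with modifications suited to the int-amplified setting.

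The first step is to establish $K_X\equiv 0$ together with the quasi-\'etaleness of $f$. The ramification formula gives $f^*(-K_X)-(-K_X)=R_f$, whose class lies in the pseudo-effective cone $\PE^1(X)$ since $R_f$ is effective and $K_X$ is $\Q$-Cartier. Because $\PE^1(X)$ is a $\varphi$-invariant convex cone, where $\varphi:=f^*|_{\N^1(X)}$, Theorem \ref{main-thm-cri}(4) forces $[-K_X]\in\PE^1(X)$. Combined with the hypothesis $[K_X]\in\PE^1(X)$ and the fact that $\PE^1(X)$ contains no line, we deduce $K_X\equiv 0$. This numerical triviality yields $R_f\equiv 0$; since $R_f$ is an effective Weil divisor with $R_f\cdot H^{n-1}=0$ for any ample $H$, we conclude $R_f=0$, so $f$ is quasi-\'etale.

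The second step is to reduce to the case where the endomorphism is \'etale. By Lemma \ref{lem-nak-gkp}, for $k\gg 1$ the Galois closure tower $\tau_k:V_k\to X$ of $f^k$ has \'etale transition morphisms $g_k,h_k:V_{k+1}\to V_k$. Equivalently, invoking the Greb--Kebekus--Peternell theorem, there is a maximally quasi-\'etale finite Galois cover $\pi:\tilde X\to X$; after replacing $f$ by a positive iterate, $f$ lifts via \cite[Corollary 8.2]{CMZ} to a surjective endomorphism $\tilde f:\tilde X\to\tilde X$. This lift is int-amplified by Lemma \ref{lem-int-des2} and, since $K_{\tilde X}\equiv 0$ is pulled back from Step 1, is quasi-\'etale; the maximality of $\tilde X$ then upgrades $\tilde f$ to an \'etale endomorphism of a klt variety with $K_{\tilde X}\equiv 0$.

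The third step is to identify $\tilde X$ as a cover of an abelian variety. Invoking the singular Beauville--Bogomolov decomposition for klt Calabi--Yau varieties, after a further finite \'etale cover $Y\to\tilde X$ we have a splitting $Y\cong A\times\prod_i Z_i$, where $A$ is an abelian variety and each $Z_i$ is a simply-connected strict Calabi--Yau or irreducible holomorphic symplectic variety. Lifting $\tilde f$ through $Y$ (after yet another iterate so as to preserve the factors), the restriction to each $Z_i$ would be an \'etale self-map of a simply-connected projective variety, hence an isomorphism; the induced action on $\N^1(Z_i)$ would then have an eigenvalue of modulus $1$, contradicting Theorem \ref{main-thm-cri}(2) via the product structure and Lemma \ref{lem-deg>1}. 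Hence each $Z_i$ is trivial, $Y=A$ is abelian, and consequently $X$ is $Q$-abelian, being a quasi-\'etale quotient of $A$.

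The principal obstacle is Step 3, which depends on the singular Beauville--Bogomolov decomposition and on the rigidity of simply-connected Calabi--Yau and irreducible symplectic varieties under \'etale self-maps. A more self-contained alternative, closer in spirit to \cite{Na-Zh}, would proceed by induction on dimension via the Albanese morphism: Theorem \ref{main-thm-alb} produces an int-amplified descent $g$ on $\Alb(X)$, Lemma \ref{fibres-rc+irr} shows the Albanese fibres are irreducible and equi-dimensional with $K_F\equiv 0$, and a quasi-\'etale descent argument applied to each periodic fibre yields that $X$ itself is $Q$-abelian.
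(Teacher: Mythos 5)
Your Step 1 ($K_X\equiv 0$ and $f$ quasi-\'etale) matches the paper exactly. From Step 2 onward you diverge substantially. The paper does use Lemma \ref{lem-nak-gkp} and the Galois-closure tower $(V_k,\tau_k,g_k,h_k)$, but the crux of its argument is a pair of intersection-theoretic computations that your proposal bypasses entirely: using the relations $\deg h_k = d\cdot\deg g_k$, $g_k^*=h_k^*$ on cycles that are preserved, and the projection formula, the paper shows first that $\Sing(V_k)$ would produce a cycle $Z_k$ with $Z_k\cdot f^*A^m=d\,Z_k\cdot A^m$, forcing $Z_k\cdot A^m=0$ by Lemma \ref{lem-lim-int}, hence $V_k$ is \emph{smooth}; and second, by the identical pattern, that $c_2(V_k)\cdot A_k^{n-2}=0$. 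With $V_k$ smooth, $c_1(V_k)\equiv 0$ (quasi-\'etaleness of $\tau_k$), and $c_2(V_k)\cdot A_k^{n-2}=0$, Yau's theorem forces a flat metric, so $V_k$ is a finite \'etale torus quotient, hence $Q$-abelian, hence so is $X$. This is deliberately elementary: the only deep input is the classical smooth Yau/Beauville result, and the int-amplified technology enters exactly through Lemma \ref{lem-lim-int}.

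Your Step 3 instead invokes the singular Beauville--Bogomolov decomposition for klt varieties with $K\equiv 0$, which is a much heavier (and historically much more recent) theorem than anything the paper uses; the whole point of the paper's argument is precisely to \emph{avoid} needing any such decomposition by forcing the Galois covers to be smooth. Beyond the mismatch in weight, there are concrete gaps. (a) The lift of $f$ to the maximally quasi-\'etale cover $\tilde X$ is not justified by \cite[Corollary 8.2]{CMZ}, which is a statement about $Q$-abelian targets and is applied in the paper only in Lemma \ref{lem-inv-qabe}; the correct mechanism for lifting along the Galois tower is Lemma \ref{lem-nak-gkp} itself, and one has to be careful that an iterate of $f$ preserves the subgroup of $\hat\pi_1$ corresponding to $\tilde X$. (b) After the BB splitting $Y\cong A\times\prod Z_i$, the lift of $f$ need not respect the product factors; showing that some iterate does requires an additional rigidity argument about automorphism groups of the factors and their pairwise non-isogeny. (c) The contradiction you draw for a factor $Z_i$ requires first showing the restricted map really is an automorphism and second that the induced action on $\N^1(Z_i)$ (as a direct summand of $\N^1(Y)$ under $f^*$) is int-amplified; the first needs $Z_i$ to be \'etale-simply-connected (which in the singular BB theorem is stated in terms of quasi-\'etale fundamental group, not topological simple connectedness), and the second is plausible but not immediate from what you wrote. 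In short, your route is not wrong in spirit, but it replaces the paper's self-contained computation with a reliance on a far stronger black box, and several of the glue steps are left underspecified in a way that the paper's Lemmas \ref{lem-nak-gkp} and \ref{lem-lim-int} are specifically designed to handle.
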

\begin{proof} 
By Theorem \ref{main-thm-k}, $-K_X$ is pseudo-effective and hence $K_X\equiv 0$.
Therefore, $f$ is quasi-\'etale by the ramification divisor formula.
We then apply Lemma \ref{lem-nak-gkp} and use the notation there.
Note that $\deg h_k=d(\deg g_k)$ where $d:=\deg f>1$ by Lemma \ref{lem-deg>1}.
Let $A$ be an ample Cartier divisor on $X$.
Denote by $A_{k}:=\tau_{k}^*A$ and $(f^*A)_{k}:=\tau_{k}^*(f^*A)$.
In the rest of the proof, we always assume $k\gg 1$.

We first claim that $V_k$ is smooth.
Denote by $\Sing(V_k)$ the singular locus of $V_k$.
Note that $g_k$ and $h_k$ are \'etale and Galois.
So we may assume $\Sing(V_{k+1})= g_k^{-1}(\Sing(V_k))= h_k^{-1}(\Sing(V_k))$.
Suppose the contrary $\Sing(V_k)\neq \emptyset$.
Let $m:=\dim (\Sing(V_k))<\dim(X)$.
Let $S_k$ be the union of the $m$-dimensional irreducible components of $\Sing(V_k)$.
We may assume $S_{k+1}=g_k^{-1}(S_k)=h_k^{-1}(S_k)$ and $S_{k+1}=g_k^*S_k=h_k^*S_k$ as cycles.
By the projection formula, we have
$$S_{k+1}\cdot(f^*A)_{k+1}^m=S_{k+1}\cdot g_k^*((f^*A)_k)^m=(\deg g_k)  S_k\cdot (f^*A)_k^m$$
and
$$S_{k+1}\cdot(f^*A)_{k+1}^m=S_{k+1}\cdot h_k^*(A_k)^m=(\deg h_k) S_k\cdot A_k^m.$$
Then $S_k\cdot (f^*A)_k^m=d S_k\cdot A_k^m$.
Let $Z_k:=(\tau_k)_{\ast}S_k$.
By the projection formula, we have $Z_k\cdot (f^*A^m)=d Z_k\cdot A^m$.
Therefore, $1\le Z_k\cdot A^m=\lim\limits_{i\to+\infty}Z_k\cdot \frac{(f^i)^*A^m}{d^i}=0$ by Lemma \ref{lem-lim-int}, a contradiction.
So the claim is proved.

Let $n:=\dim(X)$.
Next, we claim that $c_2(V_k)\cdot A_k^{n-2}=0$.
Note that $c_2(V_{k+1})=g_k^*(c_2(V_k))=h_k^*(c_2(V_k))$.
By a similar argument, 
we have $$c_2(V_{k+1})\cdot (f^*A)_{k+1}^{n-2}=(\deg g_k)c_2(V_{k})\cdot (f^*A)_k^{n-2}=(\deg h_k)c_2(V_{k})\cdot A_k^{n-2}.$$
Let $W_k:=(\tau_k)_{\ast}c_2(V_k)$.
By the projection formula, $c_2(V_k)\cdot (f^*A)_k^{n-2}=W_k\cdot (f^*A^{n-2})=
dc_2(V_k)\cdot A_k^{n-2}=dW_k\cdot A^{n-2}$.
Therefore $c_2(V_k)\cdot A_k^{n-2}=W_k\cdot A^{n-2}=\lim\limits_{i\to+\infty} W_k\cdot\frac{(f^i)^*A^{n-2}}{d^i}=0$ by Lemma \ref{lem-lim-int}.

Since $f^k$ is quasi-\'etale, its Galois closure $\theta_k$ is quasi-\'etale and hence so is $\tau_k$.
In particular, $c_1(V_k)$ is numerically trivial.
Therefore, $V_k$ is $Q$-abelian by \cite{Ya} (cf.~\cite{Be}) and hence
so is $X$.
\end{proof}
%\begin{lemma} Let $\pi:X\to Y$ be a finite surjective morphism of projective varieties of dimension $n$.
%Then for any closed subvariety $Z$ of $X$ with $\dim(Z)=k$.
%Then there exists hypersurfaces $L_1,\cdots, L_{n-k}$ such that $\pi^*L_1\cap \cdots\cap \pi^*L_{n-k}$ is of pure dimensional $k$ and contains $Z$.
%In particular, $\pi^*L_1\cdots\pi^*L_{n-k}=Z+Z'$ for some effective $k$-cycle $Z'$.
%\end{lemma}

\section{Special MRC fibration and the non-uniruled case}\label{sec-mrc}
In this section, we apply Theorem \ref{thm-kx-pe-qa} to reduce the non-uniruled case to the $Q$-abelian case and prove Theorem \ref{main-thm-mrc}.

We slightly generalize \cite[Lemma 2.4]{HMZ} to the following.
\begin{lemma}\label{lem-nu-kx0}
Let $X$ be a non-uniruled normal projective variety such that $-K_X$ is pseudo-effective.
Then $K_X\sim_{\Q}0$ ($\Q$-linear equivalence) and $X$ has only canonical singularities.
\end{lemma}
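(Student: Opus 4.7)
The plan is to pass to a resolution of singularities and combine the theorem of Boucksom--Demailly--P\u{a}un--Peternell with a negativity-of-exceptional-divisors argument, and then upgrade numerical triviality to $\Q$-linear triviality via abundance for numerically trivial canonical classes.

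First I would take a resolution $\pi : \tilde{X} \to X$. Since uniruledness is a birational invariant in characteristic zero, $\tilde{X}$ is again non-uniruled, so by BDPP the class $K_{\tilde{X}}$ is pseudo-effective on $\tilde{X}$. Pushing this forward gives that $K_X$ is pseudo-effective as a Weil $\Q$-divisor class in $\N_{n-1}(X)$. Combined with the hypothesis that $-K_X$ is pseudo-effective, the two pseudo-effectivities force
$$K_X \equiv_w 0.$$

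Next, using a small $\Q$-factorial modification if needed so that $K_X$ is $\Q$-Cartier, I would write the discrepancy formula $K_{\tilde{X}} = \pi^\ast K_X + E$ with $E$ a $\pi$-exceptional $\Q$-divisor. Since both $K_{\tilde{X}}$ and $\pi^\ast(-K_X)$ are pseudo-effective on $\tilde{X}$, so is their sum $E$; but a $\pi$-exceptional $\Q$-divisor which is pseudo-effective on $\tilde{X}$ is forced to be effective by the negativity lemma. Thus every discrepancy is nonnegative and $X$ has only canonical singularities.

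Finally, to promote $K_X \equiv 0$ to $K_X \sim_{\Q} 0$, I would invoke the abundance theorem for numerically trivial $\Q$-Cartier canonical divisors on varieties with canonical singularities (Kawamata, Nakayama): such a $K_X$ is automatically torsion in the Picard group modulo torsion, hence $\Q$-linearly equivalent to zero. The main obstacle is the possibility that $K_X$ is not a priori $\Q$-Cartier, so that $\pi^\ast K_X$ and the discrepancy formula are not immediately available. This is the point where the ``slight generalization'' over \cite[Lemma 2.4]{HMZ} is needed, and it should be handled either by a small $\Q$-factorialization argument or by working with Mumford/Nakayama pullback of Weil $\Q$-divisors; once $K_X$ is shown to be $\Q$-Cartier, the canonical-singularities conclusion and the abundance step go through as above.
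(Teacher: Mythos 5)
Your first half---pass to a resolution, invoke BDPP to get $K_{\tilde X}$ pseudo-effective, push forward, and combine with the hypothesis on $-K_X$ to conclude $K_X\equiv_w 0$---is sound and is essentially what the paper does. The genuine gap is everything after that, and it all stems from the fact that $K_X$ is \emph{not} assumed $\Q$-Cartier. To write $K_{\tilde X}=\pi^*K_X+E$ you need $K_X$ $\Q$-Cartier, and both of your proposed workarounds fail to close the circle. A small $\Q$-factorial modification is available (via BCHM) when $X$ is klt, but you have not yet established any control on the singularities of $X$, so its existence is not known; and even granting a small birational $\psi:X'\to X$, the identification $K_{X'}=\psi^*K_X$ that you would need in order to transport the hypothesis ``$-K_X$ pseudo-effective'' to $X'$ again presupposes $K_X$ $\Q$-Cartier, so the reasoning is circular. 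The Mumford/Nakayama numerical pullback is mentioned only schematically and is not, as written, a substitute for the discrepancy formula in a negativity argument. Separately, the statement you invoke as the ``negativity lemma''---that a $\pi$-exceptional pseudo-effective divisor must be effective---is not the standard negativity lemma (which needs $-E$ to be $\pi$-nef); it is a true but distinct fact requiring its own justification (e.g.\ via $\sigma$-decomposition).

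The paper avoids all of this by reversing the logical order and never touching the singularities of $X$ until the very end. Working entirely on the smooth resolution $Y$, it writes the $\sigma$-decomposition $K_Y=P_\sigma(K_Y)+N_\sigma(K_Y)$. Pushing forward and using $-K_X$ pseudo-effective forces $\pi_*P_\sigma(K_Y)\equiv_w 0$, and then $P_\sigma(K_Y)\cdot(\pi^*H)^{n-1}=0$, so by a movability lemma $P_\sigma(K_Y)\equiv 0$. This yields $\kappa_\sigma(Y)=0$, and Nakayama's numerical abundance in this case gives $\kappa(Y)=0$, hence $K_Y\sim_\Q E$ with $E\ge 0$. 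Pushing forward a $\Q$-linear equivalence of Weil divisors is unproblematic, so $K_X\sim_\Q\pi_*E$, and $\pi_*E\equiv_w 0$ forces $\pi_*E=0$; thus $E$ is $\pi$-exceptional and $K_X\sim_\Q 0$. Only now is $K_X$ $\Q$-Cartier---as a \emph{conclusion}, not a hypothesis---and canonical singularities follow at once from $K_Y\sim_\Q\pi^*K_X+E$ with $E\ge 0$. If you want to repair your argument, you should adopt this order: establish $\Q$-linear triviality of $K_X$ directly from the resolution before attempting any discrepancy computation.
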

\begin{proof}
Let $\pi:Y\to X$ be a resolution of $X$.
Since $Y$ is non-uniruled, $K_Y$ is pseudo-effective by \cite[Theorem 2.6]{BDPP}.
Thus, we have the $\sigma$-decomposition $K_Y=P_\sigma(K_Y)+N_\sigma(K_Y)$ in the sense of \cite{ZDA}: $N_\sigma(K_Y)$ is an effective $\R$-Cartier divisor determined by the following property:
$P_\sigma(K_Y) = K_Y - N_\sigma(K_Y)$ is movable,
and if $B$ is an effective $\R$-Cartier divisor such that $K_Y - B$ is movable, then $N_\sigma(K_Y)\le B$.
Here, an $\R$-Cartier divisor $D$ is called movable if:
for any ample $\R$-Cartier divisor $H'$ and any prime divisor $\Gamma$, there is an effective $\R$-Cartier divisor $\Delta$ such that
$\Delta\sim D+ H'$ and $\Gamma\not\subset \Supp \Delta$ (cf.~\cite[Chapter III, \S 1.b]{ZDA}).

Note that $K_X=\pi_*K_Y\sim \pi_*P_\sigma(K_Y)+\pi_*N_\sigma(K_Y)$ and $-K_X$ is pseudo-effective.
We have $K_X\equiv_w 0$ (weak numerical equivalence, cf.~\cite[\S 2]{MZ}).
Then $\pi_*P_\sigma(K_Y)\equiv_w 0$.
Let $H$ be an ample Cartier divisor on $X$ and $n:=\dim(X)$.
Then $P_\sigma(K_Y)\cdot (\pi^*H)^{n-1}=0$ by the projection formula.
Since $P_\sigma(K_Y)$ is movable and $\pi^*H$ is nef and big, $P_\sigma(K_Y)\equiv 0$ by Lemma \ref{lem-mov-0}.
In particular, the numerical Kodaira dimension $\kappa_\sigma(Y)$ of $Y$, in the sense of \cite[Chapter V]{ZDA}, is zero.
By \cite[Corollary 4.9]{ZDA}, the Kodaira dimension $\kappa(Y)=0$.
Therefore, $K_Y\sim_{\Q} E$ for some effective $\Q$-Cartier divisor $E$.
Note that $\pi_*E\sim_{\Q}\pi_*K_Y\equiv_w 0$.
Then $E$ is $\pi$-exceptional and hence $K_X\sim_{\Q} 0$.

Note that $K_Y\sim_{\Q}\pi^*K_X+E$.
So $X$ has canonical singularities.
\end{proof}

\begin{lemma}\label{lem-mov-0}
Let $X$ be a smooth projective variety of dimension $n$.
Let $D$ be a movable $\R$-Cartier divisor such that $D\cdot H^{n-1}=0$ for some nef and big Cartier divisor $H$.
Then $D\equiv 0$.
\end{lemma}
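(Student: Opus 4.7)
The plan is to peel $H^{n-1}$ apart one factor at a time by Kodaira's lemma, iterating until the intersection becomes a pure power of an ample class, and then to conclude by a standard Hodge-index argument. Since $H$ is nef and big, Kodaira's lemma gives an ample $\R$-Cartier divisor $A$ and an effective $\R$-Cartier divisor $E=\sum_i e_i E_i$ (with $e_i>0$ and the $E_i$ distinct prime divisors) such that $H\equiv A+E$.

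I will show by induction on $k=0,1,\ldots,n-1$ that $D\cdot A^{k}\cdot H^{n-1-k}=0$. The base case $k=0$ is the hypothesis. Assuming the statement for $k$, substitute $H\equiv A+E$ in a single factor:
$$
0\;=\;D\cdot A^{k}\cdot H^{n-1-k}\;=\;D\cdot A^{k+1}\cdot H^{n-2-k}\;+\;D\cdot A^{k}\cdot E\cdot H^{n-2-k}.
$$
The first summand is non-negative: $D$ is movable, hence pseudo-effective, and $A^{k+1}\cdot H^{n-2-k}$ is a nef curve class. For the second summand, use the definition of movability: for each prime component $E_i$ of $E$ and each $\varepsilon>0$, choose an effective $\R$-divisor $\Delta_{\varepsilon}\sim D+\varepsilon A$ whose support does not contain $E_i$. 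Then $\Delta_{\varepsilon}\cdot E_i$ is a proper intersection, hence an effective $(n-2)$-cycle, and pairing with the nef product $A^{k}\cdot H^{n-2-k}$ yields a non-negative number; letting $\varepsilon\to 0$ and summing over $i$ gives $D\cdot A^{k}\cdot E\cdot H^{n-2-k}\geq 0$. Both summands must therefore vanish, giving $D\cdot A^{k+1}\cdot H^{n-2-k}=0$ and completing the induction.

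Setting $k=n-1$ produces $D\cdot A^{n-1}=0$ for the ample $\R$-Cartier divisor $A$ and the pseudo-effective $D$. A standard Hodge-index argument now forces $D\equiv 0$: when $n=2$ this is the classical Hodge index theorem on surfaces; when $n\geq 3$, pick $m\gg 0$ so that $mA$ is very ample, take a general smooth $Y\in|mA|$ via Bertini, observe that $D|_Y$ is pseudo-effective on $Y$ with $D|_Y\cdot(A|_Y)^{n-2}=m\cdot D\cdot A^{n-1}=0$, apply induction on dimension to get $D|_Y\equiv 0$, and upgrade this to $D\equiv 0$ on $X$ by the Lefschetz hyperplane theorem (which gives the injectivity $\N^1(X)\hookrightarrow\N^1(Y)$).

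The only delicate point in this plan is verifying that $\Delta_{\varepsilon}\cdot E_i$ is genuinely an effective cycle so that it pairs non-negatively with nef products; this is precisely the content of movability, which allows $\Delta_\varepsilon$ to be chosen avoiding any prescribed prime divisor. This is also the place where pseudo-effectivity alone would fail (as illustrated by the exceptional divisor of a blowup of $\mathbb{P}^n$ paired with the pullback of a hyperplane), highlighting why the movability hypothesis is essential rather than a technical convenience.
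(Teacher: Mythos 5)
Your approach is genuinely different from the paper's. The paper shows directly that $D\cdot G\cdot H^{n-2}\geq 0$ for every prime divisor $G$, by exploiting movability to make $D|_G$ pseudo-effective, and then invokes \cite[Lemma 2.2]{Na-Zh} to finish. You instead introduce a Kodaira decomposition $H\equiv A+E$ and run an induction that ``trades'' copies of $H$ for copies of $A$ one at a time, arriving at the cleaner intermediate conclusion $D\cdot A^{n-1}=0$ with $A$ ample. Both routes use the same core mechanism (movability forces $\Delta_\varepsilon\sim D+\varepsilon A$ to avoid a fixed prime, giving non-negativity of intersections with effective restrictions), but you embed it in an inductive peeling argument rather than invoking it once and citing a black-box lemma; the gain is a more self-contained reduction to a Hodge-index statement involving only an ample class.

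Two points in your final Hodge-index step deserve more care. First, $A$ is an ample \emph{$\R$-}Cartier divisor, so $mA$ need not be Cartier and $|mA|$ need not make sense; you should first observe that writing $A=\sum a_iA_i$ with $A_i$ ample Cartier and $a_i>0$ and expanding $D\cdot A^{n-1}$ gives a sum of non-negative terms (each $D\cdot A_{i_1}\cdots A_{i_{n-1}}\geq 0$ since $D$ is pseudo-effective), hence every $D\cdot A_{i_1}\cdots A_{i_{n-1}}=0$; you may then take a single ample Cartier $A_1$ in place of $A$. Second, and more substantively, your Lefschetz induction asserts that $D|_Y$ is pseudo-effective and then ``applies induction on dimension,'' but the inductive hypothesis would be the statement for a merely pseudo-effective class, and restricting a pseudo-effective class to a \emph{general} ample hypersurface is not automatic (it typically needs a very general hypersurface over an uncountable field, since each approximation $D+\varepsilon A\equiv A_\varepsilon+E_\varepsilon$ contributes its own bad locus). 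For your original $D$ the movability hypothesis rescues the first restriction, but it does not obviously propagate down the chain of iterated sections. A cleaner way to close the argument, staying entirely on $X$, is to note that movability gives $D\cdot G\cdot A^{n-2}\geq 0$ for every prime $G$, so writing $D$ as a limit of effective classes yields $D^2\cdot A^{n-2}\geq 0$; combined with $D\cdot A^{n-1}=0$ and the signature $(1,\rho-1)$ of the form $(D_1,D_2)\mapsto D_1\cdot D_2\cdot A^{n-2}$ on $\N^1(X)_\R$ (Hodge--Riemann), this forces $D\equiv 0$ without any restriction to hyperplanes.
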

\begin{proof}
Since $D$ is movable, we may write $D\equiv \lim\limits_{i\to +\infty} D_i$ where $m_iD_i$ is Cartier and effective for some $m_i$ and the base locus of $|m_iD_i|$ is of codimension at least 2. 
Then for any prime divisor $G$, $D_i|_G$ is effective and hence $D|_G\equiv \lim\limits_{i\to +\infty} D_i|_G$ is pseudo-effective.
So $D\cdot G\cdot H^{n-2}=D|_G\cdot (H|_G)^{n-2}\ge 0$.
By \cite[Lemma 2.2]{Na-Zh}, $D\equiv 0$ .
\end{proof}

\begin{theorem}\label{thm-nonuniruled} Let $f:X\to X$ be an int-amplified endomorphism of a non-uniruled normal projective variety $X$.
Then $X$ is $Q$-abelian with only canonical singularities.
\end{theorem}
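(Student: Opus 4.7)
The plan is to chain together three results already established in the paper: Theorem \ref{main-thm-k}, Lemma \ref{lem-nu-kx0}, and Theorem \ref{thm-kx-pe-qa}. Since $X$ is non-uniruled, the strategy is to show first that $K_X \sim_\Q 0$ with canonical singularities, which upgrades the hypotheses to those required to invoke the $K_X$-pseudo-effective case.

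First, I would apply Theorem \ref{main-thm-k} to the int-amplified endomorphism $f$. This gives that $-K_X$ is weakly numerically equivalent to some effective Weil $\Q$-divisor, so in particular $[-K_X] \in \PE_{n-1}(X)$, i.e., $-K_X$ is pseudo-effective as a Weil $\R$-divisor. Combined with the non-uniruledness hypothesis, this puts us in the exact setting of Lemma \ref{lem-nu-kx0}, which then yields $K_X \sim_\Q 0$ and that $X$ has only canonical singularities.

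Next, since canonical singularities are in particular klt (their discrepancies are $\ge 0 > -1$) and $K_X$ is $\Q$-Cartier with $K_X \sim_\Q 0$, both the klt assumption and the pseudo-effectivity of $K_X$ required by Theorem \ref{thm-kx-pe-qa} hold. Applying that theorem directly to the int-amplified endomorphism $f:X\to X$ concludes that $X$ is $Q$-abelian, and we already have canonical singularities from the previous step.

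Since each ingredient is quoted verbatim from earlier in the paper, there is no genuine obstacle to overcome here; the proof is essentially a two-line composition. The only point requiring minor care is the passage from ``weakly numerically equivalent to an effective Weil $\Q$-divisor'' (the conclusion of Theorem \ref{main-thm-k}) to ``pseudo-effective as a Weil divisor'' (the hypothesis of Lemma \ref{lem-nu-kx0}), but this is immediate from the definition of $\PE_{n-1}(X)$.
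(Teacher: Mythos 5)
Your proposal matches the paper's proof exactly: apply Theorem~\ref{main-thm-k} to get $-K_X$ pseudo-effective, then Lemma~\ref{lem-nu-kx0} to conclude $K_X\sim_{\Q}0$ with canonical singularities, then Theorem~\ref{thm-kx-pe-qa} (noting canonical implies klt) to conclude $X$ is $Q$-abelian. Same chain of lemmas, same order, correct.
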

\begin{proof} By Theorem \ref{main-thm-k}, $-K_X$ is pseudo-effective.
So $K_X\sim_{\Q}0$ and $X$ has only canonical singularities by Lemma \ref{lem-nu-kx0}.
In particular, $K_X$ is pseudo-effective and hence $X$ is $Q$-abelian by Theorem \ref{thm-kx-pe-qa}.
\end{proof}

We recall the following definition for the reader's convenience.
\begin{definition}[Special MRC fibration]\label{def-smrc}
Let $X$ be a normal projective variety. 
A {\it special MRC fibration} for $X$ is a dominant rational map $\pi:X\dasharrow Y$ into a normal projective variety $Y$ such that
\begin{itemize}
\item[(1)] $Y$ is non-uniruled,
\item[(2)] the second projection $p_2: \Gamma \to Y$ for the graph $\Gamma\subseteq X \times Y$ of $\pi$ is equi-dimensional,
\item[(3)] a general fiber of $p_2$ is rationally connected,
\item[(4)] $\pi$ is a Chow reduction.
\end{itemize}
The special MRC fibration always exists and $\pi$ is uniquely determined up to isomorphism (cf.~\cite[Theorem 4.18]{Na10}).
Moreover, any surjective endomorphism $f:X\to X$ descends to some surjective endomorphism $g:Y\to Y$ equivariantly via $\pi$ (cf.~\cite[Theorem 4.19]{Na10}).
\end{definition}

\begin{proof}[Proof of Theorem \ref{main-thm-mrc}]
(1) follows from \cite[Theorem 4.19]{Na10} (cf.~\cite[Lemma 4.1]{MZ}) and Lemma \ref{lem-int-des1}.
(2) follows from Theorem \ref{thm-nonuniruled}.
(3) follows from Lemma \ref{fibres-rc+irr}.
(4) follows from Lemma \ref{mor-q-abelian}.
\end{proof}

\section{Albanese morphism and Albanese map}\label{sec-alb}
In this section, we prove Theorem \ref{main-thm-alb}.

We recall the notion of Albanese morphism and Albanese map of a normal projective variety (cf.~\cite[\S5]{CMZ}).
\begin{definition}\label{def-alb} Let $X$ be a normal projective variety.

There is an {\it Albanese morphism} $\alb_X:X\to \Alb(X)$ such that:
$\Alb(X)$ is an abelian variety,
$\alb_X(X)$ generates $\Alb(X)$, and for every morphism $\varphi:X\to A$ from $X$ to an abelian variety $A$,
there exists a unique morphism $\psi:\Alb(X)\to A$ such that $\varphi = \psi\circ \alb_X$
(cf. \cite[Remark 9.5.25]{FGI}).

In the birational category, there is an {\it Albanese map} $\mathfrak{alb}_X:X\dashrightarrow \mathfrak{Alb}(X)$ such that:
$\mathfrak{Alb}(X)$ is an abelian variety, $\mathfrak{alb}_X(X)$ generates $\mathfrak{Alb}(X)$, and for every rational map $\varphi:X\dashrightarrow A$ from $X$ to an abelian variety $A$,
there exists a unique morphism $\psi:\mathfrak{Alb}(X)\to A$ such that $\varphi = \psi\circ \mathfrak{alb}_X$ (cf.~\cite[Chapter II.3]{Lang}).
If $\mathfrak{alb}_X$ is a morphism, then $\mathfrak{alb}_X$ and $\alb_X$ are the same.

Let $f:X\to X$ be a surjective endomorphism of a normal projective variety $X$ over $k$.
By the above two universal properties, $f$ descends to surjective endomorphisms on $\Alb(X)$ and $\mathfrak{Alb}(X)$.
\end{definition}

\begin{lemma}\label{lem-aa}
Let $f:A\to A$ be a surjective endomorphism of an abelian variety $A$.
Let $Z$ be an $f$-invariant subvariety of $A$ such that $f|_Z$ is amplified.
Then $Z$ is an abelian variety.
\end{lemma}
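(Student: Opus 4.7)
The plan is to combine Theorem \ref{thm-amp-kod} with the classical Ueno--Kawamata characterization of subvarieties of abelian varieties. Since $f|_Z$ is amplified by hypothesis, Theorem \ref{thm-amp-kod} yields immediately that $\kappa(Z) \le 0$.

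On the other hand, $\kappa(Z) \ge 0$ for any closed subvariety $Z$ of an abelian variety $A$: taking a resolution $\nu : \widetilde{Z} \to Z$, the composition $\widetilde{Z} \to A$ induces a generically surjective sheaf morphism $\Omega^1_A \otimes \mathcal{O}_{\widetilde{Z}} \cong \mathcal{O}_{\widetilde{Z}}^{\oplus \dim A} \twoheadrightarrow \Omega^1_{\widetilde{Z}}$, and its top exterior power produces a nonzero global section of $K_{\widetilde{Z}}$. Combining the two bounds forces $\kappa(Z) = 0$.

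Finally, I would invoke the Ueno--Kawamata theorem characterizing subvarieties of abelian varieties with vanishing Kodaira dimension: such a subvariety is necessarily a translate of an abelian subvariety. Concretely, Ueno's fibration gives $\kappa(Z) = \dim Z - \dim B(Z)^0$, where $B(Z)^0$ is the identity component of the stabilizer $\{a \in A : a + Z = Z\}$; so $\kappa(Z) = 0$ forces $\dim B(Z)^0 = \dim Z$, meaning $Z$ is a single $B(Z)^0$-orbit, i.e., a translate of the abelian subvariety $B(Z)^0$. In particular, $Z$ is an abelian variety, as required.

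I anticipate no serious obstacle here: both ingredients---Theorem \ref{thm-amp-kod} (already proved in this paper) and the classical Ueno--Kawamata characterization---apply essentially off the shelf, and Fakhruddin's Theorem \ref{thm-fak} is not even needed directly beyond its role in establishing Theorem \ref{thm-amp-kod}. The only minor technical point is that $Z$ may be singular, but both the Kodaira dimension and the Ueno--Kawamata statement pass routinely to a resolution, so this causes no difficulty.
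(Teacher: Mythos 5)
Your proof is correct and follows essentially the same route as the paper's: use Theorem \ref{thm-amp-kod} to get $\kappa(Z)\le 0$ and then invoke Ueno's characterization of subvarieties of abelian varieties (\cite[Theorem 3.10]{Ue}). The paper cites Ueno's theorem as a black box (which already contains both the bound $\kappa(Z)\ge 0$ and the conclusion that $\kappa=0$ forces $Z$ to be a translate of an abelian subvariety), whereas you unpack those two steps explicitly; the content is the same.
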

\begin{proof}
By Theorem \ref{thm-amp-kod}, $\kappa(Z)\le 0$.
Therefore, $Z$ is an abelian variety by \cite[Theorem 3.10]{Ue}.
\end{proof}

\begin{lemma}\label{lem-alb-surj}
Let $f:X\to X$ be an int-amplified endomorphisms of a normal projective variety $X$.
Then the Albanese morphism $\alb_X$ is surjective.
\end{lemma}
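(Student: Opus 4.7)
The plan is to identify the image $Z := \alb_X(X) \subseteq \Alb(X)$ with $\Alb(X)$ itself. By the universal property of the Albanese morphism (Definition \ref{def-alb}), $f$ descends to a surjective endomorphism $g : \Alb(X) \to \Alb(X)$ satisfying $g \circ \alb_X = \alb_X \circ f$. Evaluating this identity on $X$ and using surjectivity of $f$ gives
\[
g(Z) = \alb_X(f(X)) = \alb_X(X) = Z,
\]
so $Z$ is a $g$-invariant closed subvariety of $\Alb(X)$.

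Next, I would restrict the codomain of $\alb_X$ to $Z$, obtaining a surjective morphism $\bar{\alb}_X : X \to Z$ of projective varieties that intertwines $f$ with $g|_Z$. Since $f$ is int-amplified, Lemma \ref{lem-int-des1} applies to $\bar{\alb}_X$ and shows that $g|_Z$ is int-amplified on $Z$; in particular $g|_Z$ is amplified. Now apply Lemma \ref{lem-aa} to the pair $(g, Z) \subseteq (\Alb(X), \Alb(X))$: the lemma yields that $Z$ is an abelian variety, which, being a subvariety of $\Alb(X)$, must be a translate of an abelian subvariety of $\Alb(X)$.

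To conclude, I would invoke the defining property that $\alb_X(X) = Z$ generates $\Alb(X)$ (as an algebraic group). Writing $Z = a + A'$ for an abelian subvariety $A' \le \Alb(X)$ and a point $a \in \Alb(X)$, the subgroup generated by $Z$ is contained in $A' + \langle a \rangle$; equality with $\Alb(X)$ forces $A' = \Alb(X)$, hence $Z = \Alb(X)$ as desired. The only real work in this argument is checking that $g|_Z$ is indeed int-amplified — a direct invocation of Lemma \ref{lem-int-des1} — and the last step is a standard fact about abelian subvarieties generating the ambient abelian variety, so no genuine obstacle is expected.
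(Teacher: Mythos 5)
Your proposal is correct and follows essentially the same route as the paper: identify $Z=\alb_X(X)$, note $g(Z)=Z$, invoke Lemma~\ref{lem-int-des1} to see $g|_Z$ is int-amplified (hence amplified), conclude $Z$ is an abelian variety via Lemma~\ref{lem-aa}, and finish using the universal/generating property of the Albanese morphism. The only cosmetic difference is that the paper dispatches the last step by citing the universal property directly, while you spell it out via the translate $Z=a+A'$; both are fine.
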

\begin{proof}
Let $Z:=\alb(X)$ and $g:=f|_{\Alb(X)}$.
Then $g(Z)=Z$.
By Lemma \ref{lem-int-des1}, $g|_Z$ is int-amplified and hence amplified.
By Lemma \ref{lem-aa}, $Z$ is an abelian variety.
By the universal property of $\alb_X$, we have $Z=\Alb(X)$.
\end{proof}

\begin{lemma}\label{lem-alb-fin-iso} Let $f:X\to X$ be an int-amplified endomorphism of a normal projective variety $X$.
Suppose $\alb_X$ is finite.
Then $\alb_X$ is an isomorphism and $X$ is an abelian variety.
\end{lemma}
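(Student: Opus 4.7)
The plan is to first show that $\alb_X$ is quasi-\'etale, then upgrade to \'etale via purity of branch locus, deduce that $X$ itself is an abelian variety, and finally use the Albanese universal property to force $\alb_X$ to have degree one.

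For the first step: by Lemma~\ref{lem-alb-surj}, $\alb_X:X\to A:=\Alb(X)$ is surjective, so the hypothesis makes it a finite surjective morphism onto an abelian variety. Since $K_A=0$, the ramification divisor formula reads $K_X=R_{\alb_X}$ as classes of Weil divisors, exhibiting $K_X$ as an effective Weil divisor. On the other hand, Theorem~\ref{main-thm-k} provides an effective Weil $\Q$-divisor $E$ with $-K_X\equiv_w E$, so $R_{\alb_X}+E\equiv_w 0$ with both summands effective. Intersecting with $H^{n-1}$ for an ample Cartier divisor $H$, and using that any non-zero effective Weil $\Q$-divisor has strictly positive intersection against $H^{n-1}$, I would conclude $R_{\alb_X}=0$. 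Hence $\alb_X$ is quasi-\'etale; since $A$ is smooth, the purity of branch locus promotes $\alb_X$ to a finite \'etale morphism, and in particular $X$ is smooth.

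Next I would invoke the classical fact that a connected finite \'etale cover of an abelian variety is itself an abelian variety: the Galois closure $Z\to A$ of $\alb_X$ is a connected finite \'etale Galois cover of $A$ with abelian Galois group, hence is an isogeny of abelian varieties, and $X$ is then realized as the quotient of the abelian variety $Z$ by a finite subgroup of translations, so $X$ carries a natural structure of abelian variety. Applying the universal property of the Albanese morphism to $\id_X$, now viewed as a morphism from $X$ to the abelian variety $X$, produces $\psi:\Alb(X)\to X$ with $\psi\circ\alb_X=\id_X$. Therefore $\deg(\alb_X)=1$, and being a finite birational morphism between normal varieties, $\alb_X$ is an isomorphism; in particular $X\simeq\Alb(X)$ is an abelian variety.

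The main obstacle is the opening step: without the effective representative for $-K_X$ coming from Theorem~\ref{main-thm-k}, there would be no leverage to force the ramification divisor $R_{\alb_X}$ to vanish. Once ramification is killed, the purity of branch locus and the Albanese universal property combine with only standard facts about abelian varieties to finish the argument.
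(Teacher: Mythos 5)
Your argument is essentially the paper's proof: surjectivity from Lemma~\ref{lem-alb-surj}, the ramification formula showing $K_X=R_{\alb_X}\ge 0$ while Theorem~\ref{main-thm-k} gives $-K_X$ pseudo-effective (hence $R_{\alb_X}=0$), purity of branch locus to get an \'etale map, the Serre--Lang/Mumford fact that a finite \'etale cover of an abelian variety is an abelian variety, and the Albanese universal property to force degree one. You merely spell out two steps the paper states tersely (the $H^{n-1}$-intersection killing $R_{\alb_X}$, and the Galois-closure justification of the \'etale-cover fact), so this is the same proof in slightly expanded form.
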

\begin{proof}
By Lemma \ref{lem-alb-surj}, $\alb_X$ is surjective.
By the ramification divisor formula, $K_X=(\alb_X)^*K_{\Alb(X)}+R_{\alb_X}=R_{\alb_X}$, where $R_{\alb_X}$ is the effective ramification divisor of $\alb_X$.
By Theorem \ref{main-thm-k}, $-K_X$ is pseudo-effective.
So $R_{\alb_X}=0$ and hence $\alb_X$ is \'etale by the purity of branch loci.
Therefore, $X$ is an abelian variety (cf.~\cite[Chapter IV, 18]{Mu}).
By the universal property, $\alb_X$ is an isomorphism.
\end{proof}

\begin{proof}[Proof of Theorem \ref{main-thm-alb}]
Let $g:=f|_{\Alb(X)}$.
By Lemmas \ref{lem-alb-surj} and \ref{lem-int-des1}, $\alb_X$ is surjective and $g$ is int-amplified.
Taking the Stein factorization of $\alb_X$, we have $\varphi:X\to Y$ and $\psi:Y\to \Alb(X)$ such that $\varphi_\ast\mathcal{O}_X=\mathcal{O}_Y$ and $\psi$ is a finite morphism.
Then $f$ descends to an int-amplified endomorphism $f_Y:Y\to Y$ by \cite[Lemma 5.2]{CMZ} and Lemma \ref{lem-int-des1}.
By the universal property, $\psi=\alb_Y$.
So $\psi$ is an isomorphism by Lemma \ref{lem-alb-fin-iso}, and we can identify
$\alb_X : X \to \Alb(X)$ with $\varphi: X \to Y$.
By Lemma \ref{fibres-rc+irr}, all the fibres of $\alb_X$ are irreducible and equi-dimensional.
So (1) is proved.

For (2), let $W$ be the normalization of the graph of $\mathfrak{alb}_X$.
Then $\Alb(W)=\mathfrak{Alb}(W)=\mathfrak{Alb}(X)$.
Note that $f$ lifts to an int-amplified endomorphism $f_W:W\to W$ by Lemma \ref{lem-int-des2}.
Therefore by (1), the induced endomorphism $h:\mathfrak{Alb}(X)\to \mathfrak{Alb}(X)$ is int-amplified.
Since $\alb_W$ is surjective by (1), $\mathfrak{alb}_X$ is dominant.
\end{proof}

\section{Minimal model program for int-amplified endomorphisms}\label{sec-mmp}
In this section, we apply Lemma \ref{lem-lim-int} and generalize the theory of equivariant MMP to the int-amplified case.
We refer to \cite[Section 6]{MZ} for all technical details involved.

We rewrite the proof of \cite[Lemma 6.1]{MZ} by highlighting the differences for the reader's convenience.
\begin{lemma}\label{lem-MA} Let $f:X\to X$ be an int-amplified endomorphism of a projective variety.
Suppose $A\subseteq X$ is a closed subvariety with $f^{-i}f^i(A) = A$ for
all $i\ge 0$. Then $M(A) := \{f^i(A)\,|\, i \in \Z\}$ is a finite set.
\end{lemma}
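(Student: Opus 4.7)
I plan to prove the forward orbit $\{f^i(A):i\ge 0\}$ is finite; the backward orbit $\{f^{-i}(A)\}$ admits a parallel treatment since each set-theoretic preimage itself satisfies the saturation hypothesis (direct check using the identity $f^{-j}(f^j(f^{-i}(A)))=f^{-i}(A)$). After base change we may assume the base field $k$ is uncountable. Put $n=\dim X$ and $d=\dim A$. By Lemma~\ref{lem-deg>1} we have $\deg f>1$, and since $f$ is finite surjective each $f^i(A)$ is an irreducible closed subvariety of dimension $d$. If $d=n$ then $A=X$ and the orbit is $\{X\}$, so assume $d<n$.

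From the saturation condition one iterates to the identity $f^{-1}(f^i(A))=f^{i-1}(A)$ for all $i\ge 1$, which forces the cycle-theoretic relation $(f^i)^\ast[f^i(A)]=M_i[A]$ for some positive integer $M_i$. Applying $(f^i)_\ast(f^i)^\ast=(\deg f)^i$ on cycle classes then gives $M_i\,\delta_i=(\deg f)^i$, where $\delta_i:=\deg(f^i|_A)$. Fix an ample Cartier divisor $H$ and write $\alpha_i:=f^i(A)\cdot H^d$. The projection formula reads $\delta_i\alpha_i=A\cdot (f^i)^\ast H^d$, and since $d<n$ Lemma~\ref{lem-lim-int} gives $A\cdot(f^i)^\ast H^d=o((\deg f)^i)=o(M_i\delta_i)$, so
\[
\alpha_i=o(M_i),
\]
and because $\alpha_i\ge 1$ this forces $M_i\to\infty$.

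The remainder is an induction on $n$. Suppose for contradiction that the forward orbit is infinite, and let $Y:=\overline{\bigcup_{i\ge 0}f^i(A)}$. Since $\bigcup_i f^i(A)$ is a countable union of proper closed subsets of the irreducible variety $X$ and $k$ is uncountable, the Baire-type property for varieties over uncountable fields shows that this union cannot be Zariski-dense, whence $Y\subsetneq X$. Because $Y$ has finitely many irreducible components permuted by $f$, after replacing $f$ by a suitable iterate and extracting a sub-orbit we may assume an irreducible component $Z$ of $Y$ satisfies $f(Z)=Z$ and contains infinitely many distinct $f^i(A)$; necessarily $\dim Z>d$, for an irreducible $d$-dimensional $Z$ could contain only itself among the $d$-dimensional $f^i(A)$. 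By Lemma~\ref{lem-amp-res} the restriction $f|_Z$ is int-amplified, the saturation condition descends to $(f|_Z,f^{i_0}(A))$ for any $i_0$ with $f^{i_0}(A)\subseteq Z$, and since $\dim Z<n$ the inductive hypothesis forces the orbit of $f^{i_0}(A)$ in $Z$ to be finite --- contradicting the infiniteness.

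The main obstacle is the scheme-theoretic identity $(f^i)^\ast[f^i(A)]=M_i[A]$ with $M_i\delta_i=(\deg f)^i$, which converts the saturation hypothesis into the decisive asymptotic $\alpha_i=o(M_i)$ through Lemma~\ref{lem-lim-int}. This cycle-pullback computation is precisely what replaces the much simpler polarized identity $(f^i)^\ast H^d=q^{id}H^d$ that is used in the proof of \cite[Lemma~6.1]{MZ}, and it is the only place where the int-amplified hypothesis enters in an essential way.
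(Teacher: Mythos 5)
The real gap is in your final paragraph. You invoke a ``Baire-type property'' to conclude that $Y:=\overline{\bigcup_{i\ge 0}f^i(A)}\subsetneq X$, but this does not follow: over an uncountable field a countable union of proper closed subsets of an irreducible variety cannot be the \emph{whole} variety, yet it can perfectly well be Zariski dense (an infinite set of closed points in $\mathbb{A}^1$, or an infinite family of lines in $\mathbb{P}^2$). So you have no control that $Y\subsetneq X$, and the induction on $\dim X$ never gets off the ground. Relatedly, you derive $M_i\to\infty$ and declare it ``decisive,'' but then never use it; the rest of your argument proceeds independently of it.

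The missing idea, which the paper's proof supplies, is to couple your asymptotics with the ramification locus. Set $\Sigma:=\Sing(X)\cup f^{-1}(\Sing(X))\cup\Supp R_f$, a fixed proper closed subset. If $f^j(A)\not\subseteq\Sigma$, then purity of the branch locus together with the iterated saturation $f^{-1}(f^{j+1}(A))=f^j(A)$ forces the multiplicity $m_j$ of $f^*[f^{j+1}(A)]$ along $f^j(A)$ to be $1$. Since $M_i=\prod_{j<i}m_j\to\infty$, infinitely many $m_j\ge 2$, hence infinitely many $f^j(A)$ lie in $\Sigma$, a set of dimension $<n$; the closure $B$ of those (after replacing $f$ by a power and extracting an $f^{-1}$-invariant irreducible component) has dimension $<n$, and one inducts from there. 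The paper reaches this more directly: assuming $m_j=1$ for all $j\ge i_0$ gives $\alpha_i=A_{i_0}\cdot \frac{1}{d^{i-i_0}}(f^{i-i_0})^*H^k\to 0$ by Lemma~\ref{lem-lim-int}, contradicting $\alpha_i\ge 1$ --- the contrapositive of your $M_i\to\infty$, packaged so it is immediately actionable. A smaller issue: your reduction of the backward orbit to the forward one treats $f^{-i}(A)$ as if it were a variety, which it need not be; the paper instead observes that once $f^{r_1}(A)=f^{r_2}(A)$ for some $0<r_1<r_2$, the identity $f^{-i}(A)=f^{-i-sr_1}f^{sr_1}(A)=f^{-i-sr_1}f^{sr_2}(A)=f^{s(r_2-r_1)-i}(A)$ for $s\gg 1$ places the whole backward orbit inside the finite forward orbit.
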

\begin{proof}
We may assume $n := \dim(X) \ge 1$.
Set $M_{\ge0}(A):=\{f^i(A)\,|\, i \ge 0\}$.

We first assert that if $M_{\ge 0}(A)$ is a finite set, then so is $M(A)$.
Indeed, suppose $f^{r_1}(A)=f^{r_2}(A)$ for some $0<r_1<r_2$.
Then for any $i>0$, $f^{-i}(A)=f^{-i}f^{-sr_1}f^{sr_1}(A)=f^{-i}f^{-sr_1}f^{sr_2}(A)=f^{sr_2-sr_1-i}(A)\in M_{\ge 0}(A)$ if $s\gg 1$.
So the assertion is proved.

Next, we show that $M_{\ge0}(A)$ is a finite set by induction on the codimension of $A$ in $X$.
We may assume $k := \dim(A)<\dim(X)$.
Let $\Sigma$ be the union of $\Sing (X)$, $f^{-1}(\Sing (X))$ and the irreducible components in the ramification divisor $R_f$ of $f$.
Set $A_i := f^i(A) \, (i\ge 0)$.

\textbf{We claim that $A_i$ is contained in $\Sigma$ for infinitely many $i$.}
Otherwise, replacing $A$ by some $A_{i_0}$, we may assume that $A_i$ is not contained in $\Sigma$ for all $i\ge 0$.
So we have $f^{\ast}A_{i+1}= A_i$.
Let $H$ be an ample Cartier divisor.
By the projection formula, $A_{i+1}\cdot H^k=A_i\cdot(\frac{1}{d}f^*(H^k))\ge 1$.
By Lemma \ref{lem-lim-int}, $1\le \lim\limits_{i\to +\infty}A_{i+1}\cdot H^k=\lim\limits_{i\to +\infty}A_1\cdot (\frac{1}{d^i}(f^i)^*(H^k))=0$, a contradiction.
So the claim is proved.

If $k=n-1$, by the claim, $f^{r_1}(A)=f^{r_2}(A)$ for some $0<r_1<r_2$.
Then $|M_{\ge 0}(A)|<r_2$.

If $k\leq n-2$, assume that $|M_{\ge 0}(A)|=\infty$.
Let $B$ be the Zariski-closure of the union of those $A_{i_1}$ contained in $\Sigma$.
Then $k+1\leq \dim(B)\le n-1$, and $f^{-i}f^i(B) = B$ for all $i \ge 0$. Choose $r \ge 1$ such that $B' := f^r(B), f(B'), f^2(B'), \cdots$ all have the same number of irreducible components.
Let $X_1$ be an irreducible component of $B'$ of maximal dimension.
Then $k+1\leq \dim(X_1)\le n-1$ and $f^{-i}f^i(X_1) = X_1$ for all $i \ge 0$. By induction, $M_{\ge 0}(X_1)$ is a finite set.
So we may assume that $f^{-1}(X_1)=X_1$, after replacing $f$ by a positive power and $X_1$ by its image.
\textbf{Note that $f|_{X_1}$ is still int-amplified by Lemma \ref{lem-amp-res}.}
Now the codimension of $A_{i_1}$ in $X_1$ is smaller than that of $A$ in $X$.
By induction, $M_{\ge 0}(A_{i_1})$ and hence $M_{\ge 0}(A)$ are finite.
\end{proof}

\begin{theorem}\label{thm-equi-mmp} Let $f:X\to X$ be an int-amplified endomorphism of a $\Q$-factorial lc projective variety $X$.
Let $\pi:X\dasharrow Y$ be a dominant rational map which is either a divisorial contraction or a Fano contraction or a flipping contraction or a flip induced by a $K_X$-negative extremal ray.
Then there exists an int-amplified endomorphism $g:Y\to Y$ such that $g\circ\pi=\pi\circ f$ after replacing $f$ by a positive power.
\end{theorem}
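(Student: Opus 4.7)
The plan is to adapt the equivariant MMP strategy of \cite[\S6]{MZ} from the polarized case. For polarized $f$, the pullback $f^{\ast}$ acts by a scalar on $\N^1(X)$ and therefore trivially preserves every extremal face, making the descent essentially automatic via the rigidity of MMP contractions. For int-amplified $f$ this scalar property is lost, but Theorem \ref{main-thm-cri} still guarantees that all eigenvalues of $f^{\ast}|_{\N^1(X)}$ have modulus greater than one, and Lemma \ref{lem-MA} provides the orbit-finiteness that replaces the scalar action.

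I would proceed separately for each type of $\pi$. For a divisorial contraction (resp.\ a flipping contraction) with exceptional locus $E$, the goal is to show $f^{-1}(E)=E$ after replacing $f$ by a suitable power. To apply Lemma \ref{lem-MA} with $A=E$, I would first verify the hypothesis $f^{-i}f^{i}(E)=E$ for all $i\ge 0$: this follows from the extremality of the contracted ray $R$, which numerically characterizes $E$ (via curves $C$ with $E\cdot C<0$, or the analogous flipping property), together with Lemma \ref{lem-amp-res} applied to $f$-invariant pieces, which rules out extraneous preimage components. Lemma \ref{lem-MA} then yields that $\{f^{i}(E)\}_{i\in\Z}$ is finite, so after replacing $f$ by a power we may assume $f^{\pm 1}(E)=E$. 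The rigidity of the contraction (uniquely determined by $E$ and $R$) descends $f$ to a surjective endomorphism $g\colon Y\to Y$ with $g\circ\pi=\pi\circ f$, and Lemma \ref{lem-int-des1} shows $g$ is int-amplified.

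For a flip $\pi\colon X\dashrightarrow X^{+}$ sitting over a flipping contraction $X\to Y$, the previous step produces an int-amplified $g\colon Y\to Y$, and the universal property of the flip $X^{+}\to Y$ lifts $g$ uniquely to a surjective endomorphism $f^{+}\colon X^{+}\to X^{+}$; since $\pi$ is birational, Lemma \ref{lem-int-des2} certifies that $f^{+}$ is int-amplified. For a Fano contraction $\pi\colon X\to Y$ with $\dim Y<\dim X$, the analogous argument uses a general fiber of $\pi$ in place of the exceptional divisor to obtain invariance of the contracted extremal ray $R$ under $f_{\ast}$ after a power; rigidity then descends $\pi$ equivariantly, and Lemma \ref{lem-int-des1} concludes.

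The main obstacle is verifying the hypothesis $f^{-i}f^{i}(A)=A$ of Lemma \ref{lem-MA} for the relevant exceptional subvariety $A$ in each case. In the polarized setting this is immediate from the scalar action of $f^{\ast}$; in the int-amplified setting it must be extracted from Lemma \ref{lem-amp-res} together with the numerical characterization of $A$ via the extremality of the contracted ray, precisely to rule out spurious preimage components. This bookkeeping, enabled by the cone-theoretic tools of Section \ref{sec-prop} (especially Theorem \ref{main-thm-cri} and Lemma \ref{lem-lim-int}), is the delicate step of the argument; once it is in hand, the remainder follows from standard MMP rigidity together with Lemmas \ref{lem-int-des1} and \ref{lem-int-des2}.
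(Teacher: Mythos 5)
Your proposal follows essentially the same route as the paper: the paper's proof simply cites that the argument of \cite[Lemmas 6.2--6.6]{MZ} goes through verbatim once \cite[Lemma 6.1]{MZ} is replaced by Lemma~\ref{lem-MA}, and you have correctly identified Lemma~\ref{lem-MA} as the replacement for the polarized scalar argument, reconstructed the case-by-case descent structure of those lemmas (divisorial, flipping, flip, Fano), and correctly used Lemmas~\ref{lem-int-des1} and~\ref{lem-int-des2} to certify that the descended endomorphism is int-amplified. The ``delicate bookkeeping'' you acknowledge deferring---verifying the hypothesis $f^{-i}f^i(A)=A$ for the relevant contracted locus $A$ and carrying out the rigidity descent---is precisely what \cite[Lemmas 6.2--6.6]{MZ} supply, and those arguments are not polarized-specific, which is why the paper can cite them directly.
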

\begin{proof} Replacing \cite[Lemma 6.1]{MZ} by our new Lemma \ref{lem-MA}, then the theorem follows by the same argument and proofs of \cite[Lemma 6.2 to Lemma 6.6]{MZ}.
\end{proof}

\section{Proof of Theorems \ref{main-thm-mmp} and \ref{main-thm-rc-diag}}\label{sec-proof}

%\begin{theorem} Let $f:X\to X$ be an int-amplified endomorphism of a smooth rationally connected projective variety. Then $f^*|_{\N^1(X)}$ is a diagonal matrix with all the eigenvalues being positive integers greater than $1$.
%\end{theorem}
%\begin{proof} The MMP ends up with a point.
%
%For birational contraction, the property is preserved. So it suffices to consider the Fano contraction.
%\end{proof}

Let $X$ be a $\Q$-factorial lc projective variety.
Let $\pi:X\to Y$ be a contraction of a $K_X$-negative extremal ray $R_C:=\mathbb{R}_{\ge0}C$ generated by some curve $C$.
Then $\NS(X)/\pi^*\NS(Y)$ is a $\mathbb{Z}$-module of rank $1$ by
the exact sequence (cf.~\cite[Theorem 1.1(4)iii]{Fu11}, or \cite[Corollary 3.17]{KM}) below
$$0\to \NS(Y)\xrightarrow{\pi^{\ast}}\NS(X)\xrightarrow{\cdot C} \mathbb{Z}\to 0.$$

Tensoring with $\R$, $\N^1(X)/\pi^*\N^1(Y)$ is a $1$-dimensional real vector space.
Let $D\in \N^1(X)$.
Then $D\cdot C=0$ implies $D\in \pi^*\N^1(Y)$;
$D\cdot C>0$ implies $D$ is $\pi$-ample;
and $D\cdot C<0$ implies $-D$ is $\pi$-ample.

Let $f:X\to X$ be an int-amplified endomorphism.
By Theorem \ref{thm-equi-mmp}, there exists some int-amplified endomorphism $g:Y\to Y$ such that $g\circ\pi=\pi\circ f$.
In particular, we have an induced map $f^*:\NS(X)/\pi^*\NS(Y)\to \NS(X)/\pi^*\NS(Y)$.
Tensoring with $\R$, we have an induced invertible linear map $f^*:\N^1(X)/\pi^*{\N^1(Y)}\to \N^1(X)/\pi^*{\N^1(Y)}$. Note that all the eigenvalues of $f^*|_{\N^1(X)}$ are of modulus greater than $1$ by Theorem \ref{main-thm-cri}. So we have the following.

\begin{lemma}\label{lem-eig-int} Let $X$ be a $\Q$-factorial lc projective variety.
Let $\pi:X\to Y$ be a contraction of a $K_X$-negative extremal ray. Let $f:X\to X$ and $g:Y\to Y$ be int-amplified endomorphisms such that $g\circ \pi=\pi\circ f$.
Then $f^*|_{\N^1(X)/\pi^*\N^1(Y)}=q \id$ for some positive integer $q>1$.
\end{lemma}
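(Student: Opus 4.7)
The plan is a short calculation once the quotient is identified. First I would note that the exact sequence displayed just above the lemma yields $\NS(X)/\pi^*\NS(Y)\cong \Z$ via intersection with $C$, so after $-\otimes\R$ the quotient $\N^1(X)/\pi^*\N^1(Y)$ is one-dimensional. Since $g\circ \pi=\pi\circ f$ gives $f^*\circ \pi^* = \pi^*\circ g^*$, the pullback $f^*$ preserves $\pi^*\NS(Y)$ and descends to a $\Z$-linear automorphism of $\NS(X)/\pi^*\NS(Y)\cong \Z$. Hence $f^*$ acts on this quotient (and on its $\R$-tensor) as multiplication by some nonzero integer $q$; it remains to prove $q>1$.

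Next, I would use the int-amplified hypothesis directly. Choose an ample Cartier divisor $L$ on $X$ such that $H:=f^*L-L$ is ample. By the definition of $q$, there exists $E\in \N^1(Y)$ with $f^*L\equiv qL+\pi^*E$. Intersecting with $C$ and using the projection formula together with the fact that $\pi$ contracts $C$ (so $\pi_*C=0$), we obtain
$$f^*L\cdot C \;=\; q(L\cdot C) + E\cdot \pi_*C \;=\; q(L\cdot C),$$
and therefore $(q-1)(L\cdot C)=H\cdot C$. Since $L$ and $H$ are ample and $C$ is a curve, both $L\cdot C>0$ and $H\cdot C>0$, forcing $q>1$.

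I do not anticipate any serious obstacle. The only small point to check is that the class of $L$ in $\N^1(X)/\pi^*\N^1(Y)$ is nonzero (so that the scalar $q$ can be read off from its image), which is immediate from $L\cdot C>0$. As an alternative first step one could instead invoke Theorem \ref{main-thm-cri} on the $f^*$-equivariant short exact sequence $0\to \pi^*\N^1(Y)\to \N^1(X)\to \N^1(X)/\pi^*\N^1(Y)\to 0$ to see that $q$ appears as an eigenvalue of $f^*|_{\N^1(X)}$ and therefore $|q|>1$; the intersection-with-$C$ calculation above is however more transparent and gives the sign of $q$ simultaneously.
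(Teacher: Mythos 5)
Your proof is correct and rests on the same foundation as the paper's: the exact sequence $0\to \NS(Y)\to \NS(X)\xrightarrow{\cdot C}\Z\to 0$ identifies the quotient with $\Z$, and the intertwining relation shows $f^*$ descends to multiplication by an integer $q$. The one place you diverge from the paper is in pinning down $q>1$: the paper appeals to Theorem \ref{main-thm-cri} to get $|q|>1$ (leaving the positivity of $q$ to the implicit observation that pullback of an ample divisor by the finite map $f$ stays $\pi$-ample, hence has positive intersection with $C$), whereas your direct computation $(q-1)(L\cdot C)=H\cdot C>0$ gives both the sign and the size in one stroke, without needing the eigenvalue criterion at all. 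Your closing remark correctly identifies the paper's route as the alternative; your version is the more elementary and self-contained of the two, at the small cost of not explaining why $q$ must be an eigenvalue of $f^*|_{\N^1(X)}$ (which you don't actually need).
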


\begin{lemma}\label{lem-diag-fano} Let $X$ be a $\Q$-factorial lc projective variety.
Let $\pi:X\to Y$ be a Fano contraction of a $K_X$-negative extremal ray.
Let $f:X\to X$ and $g:Y\to Y$ be surjective endomorphisms such that $g\circ \pi=\pi\circ f$.
Suppose $g^*|_{\N^1_{\mathbb{C}}(Y)}$ is diagonalizable. Then so is $f^*|_{\N^1_{\mathbb{C}}(X)}$.
\end{lemma}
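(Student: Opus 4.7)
The plan is to exploit the $f^*$-equivariant short exact sequence
\[
0 \longrightarrow \pi^* \N^1_{\mathbb{C}}(Y) \longrightarrow \N^1_{\mathbb{C}}(X) \longrightarrow \N^1_{\mathbb{C}}(X)/\pi^*\N^1_{\mathbb{C}}(Y) \longrightarrow 0.
\]
Since $\pi$ contracts a single $K_X$-negative extremal ray, the quotient is one-dimensional over $\mathbb{C}$, and I would identify the $f^*$-action on it as multiplication by the positive integer $q$ satisfying $f_* C = q\,C$ for any extremal curve $C$. The restriction $f^*|_{\pi^*\N^1_{\mathbb{C}}(Y)}$ is conjugate to $g^*$ via the injection $\pi^*$, hence diagonalizable by hypothesis, and I would record the eigendecomposition $\N^1_{\mathbb{C}}(Y) = \bigoplus_\lambda E_\lambda$ of $g^*$.

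First I would choose a lift $L \in \N^1_{\mathbb{C}}(X)$ of a generator of the quotient and write $f^* L = qL + \pi^* M$ for some $M\in \N^1_{\mathbb{C}}(Y)$. A direct change-of-basis computation then shows that $f^*$ is diagonalizable on $\N^1_{\mathbb{C}}(X)$ if and only if the component $M_q$ of $M$ in $E_q$ vanishes, and that $M_q$ is independent of the choice of lift. When $q$ is not an eigenvalue of $g^*$, one has $E_q=0$ and the statement follows immediately: solve $(g^*-q)P = M$ and replace $L$ by $L - \pi^* P$ to produce a $q$-eigenvector of $f^*$ outside $\pi^*\N^1_{\mathbb{C}}(Y)$, which together with the $g^*$-eigenbasis gives an eigenbasis for $f^*$.

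The case where $q$ is an eigenvalue of $g^*$ is the main obstacle. Here I would use the canonical lift $L = -K_X$, which is legitimate since $-K_X\cdot C > 0$ by the Fano hypothesis on $\pi$, and exploit the ramification identity $f^* K_X = K_X - R_f$ to obtain the explicit formula $R_f = (q-1)(-K_X) + \pi^* M$. Restricting to a general fiber $F$ of $\pi$ yields $R_{f|_F} = (q-1)(-K_F)$, so the fiber-restricted map $f|_F$ is polarized with multiplier $q$; in particular $\deg(f|_F) = q^{\dim F}$, giving the precise identity $\deg f = (\deg g)\cdot q^{\dim F}$. To conclude $M_q = 0$, I would combine this fiberwise polarization with the growth bound of Lemma~\ref{lem-lim-int}: a non-zero $M_q$ would produce a size-two Jordan block of $f^*$ at $q$, forcing $(f^k)^* L$ to grow like $k q^k$ in the direction $\pi^* M_q$, which I expect to be incompatible with an intersection-theoretic upper bound obtained by pairing $(f^k)^* L$ with a suitable curve class supported in the fibers of $\pi$. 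Turning this Jordan-block growth into a rigorous numerical contradiction with the available intersection bound is the main technical obstacle I foresee.
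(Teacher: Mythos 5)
Your setup is sound and matches the paper's first step: you correctly identify the one-dimensional $f^*$-equivariant quotient $\N^1_{\mathbb{C}}(X)/\pi^*\N^1_{\mathbb{C}}(Y)$, observe that $f^*$ acts on it by a positive scalar $q$, note that $f^*|_{\pi^*\N^1_{\mathbb{C}}(Y)}$ is diagonalizable, and reduce the problem to showing that the $E_q$-component of the ``defect'' $M$ vanishes. The easy case (when $q$ is not an eigenvalue of $g^*$) is handled exactly as you say, and your degree formula $\deg f = (\deg g)\cdot q^{\dim F}$ does agree with the paper's second claim. However, the hard case is where the real content lies, and there your argument has genuine gaps that you yourself flag.

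Concretely: (i) your plan to invoke Lemma~\ref{lem-lim-int} is not legitimate here, since that lemma requires $f$ to be int-amplified, whereas the present lemma only assumes $f$ is a surjective endomorphism; (ii) the claim that $f|_F$ is a polarized endomorphism with multiplier $q$ is not well-posed, because $f$ sends the fiber $F_y$ over $y$ to the fiber $F_{g(y)}$ over $g(y)$, so $f|_F$ is a morphism between (generally different) fibers, not an endomorphism of $F$; one needs an extra argument to deduce the fiber-degree; and (iii) you explicitly leave the conversion of ``Jordan-block growth'' into a numerical contradiction open, and that conversion is exactly the crux.

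The paper's proof closes the gap purely by intersection theory, with no ramification divisor and no growth estimate. It takes $x_1$ a real $\pi$-ample class spanning the Jordan defect, $x_2 = f^*x_1 - qx_1 \in \N^1_{\mathbb{C}}(Y)$ the companion $q$-eigenvector, and $x_3,\dots,x_k$ the remaining eigenvectors of $g^*$. It first produces exponents $a_1 = \dim F$, $a_2 > 0$, $a_3,\dots,a_k \ge 0$ summing to $\dim X$ with $x_1^{a_1}x_2^{a_2}\cdots x_k^{a_k} \neq 0$ (by pushing the $Y$-part to a nonzero $0$-cycle on $Y$, then hitting a general fiber with $x_1^{a_1}$). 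Applying the projection formula to this monomial gives $\deg f = \prod \lambda_i^{a_i}$. Then it applies the projection formula once more to the shifted monomial $x_1^{a_1+1}x_2^{a_2-1}\cdots x_k^{a_k}$: expanding $(f^*x_1)^{a_1+1}=(\lambda_1 x_1 + x_2)^{a_1+1}$ and using that any product with more than $\dim Y$ factors from $\pi^*\N^1(Y)$ vanishes, only the $j=0$ and $j=1$ binomial terms survive, and because $\lambda_1=\lambda_2=q$ they carry the \emph{same} scalar $\prod\lambda_i^{a_i}=\deg f$. Cancelling gives $(a_1+1)\deg f\cdot x_1^{a_1}\cdots x_k^{a_k}=0$, contradicting nonvanishing. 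This computation is what you were reaching for with your Jordan-block heuristic, but the paper lands it without any positivity/growth input, which is what makes it work in the non-int-amplified setting of the statement. Your proposal as written is therefore incomplete on the decisive case.
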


\begin{proof} Let $m:=\dim(X)$, $n:=\dim(Y)$ and $a:=m-n$.
Consider $W:=\N^1_{\mathbb{C}}(Y)$ as a subspace of $V:=\N^1_{\mathbb{C}}(X)$ via the pullback $\pi^*$.
Denote by $V_{\R}:=\N^1(X)$ and $W_{\R}:=\N^1(Y)$.
Let $\varphi:=f^*|_{\N^1_{\mathbb{C}}(X)}$.
Then $g^*|_{\N^1_{\mathbb{C}}(Y)}=\varphi|_{\N^1_{\mathbb{C}}(Y)}$.
Suppose $\varphi$ is not diagonalizable.
Since $\dim(V/W)=1$,
the Jordan canonical form of $\varphi$ is
$$\begin{pmatrix} \lambda_1 & 1 & 0 &\cdots & 0
\\ 0 & \lambda_2 &0&\cdots & 0\\ 0&0&\lambda_3&\cdots &0\\
\vdots & \vdots &\vdots &\vdots &\vdots\\
0 &0 &0 &\cdots &\lambda_k \end{pmatrix}$$
where $\lambda_1=\lambda_2>0$ by Lemma \ref{lem-eig-int}.
So we may find some $x_1\in V_{\R}\backslash W$ such that $x_2:=\varphi(x_1)-\lambda_1x_1\in W_{\R}$ is a (non-zero) eigenvector of $\lambda_2$. We may further assume $x_1$ is $\pi$-ample.
Let $x_3\in W,\cdots, x_k\in W$ be the eigenvectors of $\lambda_3,\cdots, \lambda_k$, where $k=\dim V$.

We first claim that the intersection number $x_1^{a_1}\cdot x_2^{a_2} \cdots x_k^{a_k}$ is non-zero for $a_1=a$ and suitable $a_2>0, a_3\ge 0\cdots, a_k\ge 0$ such that $\sum\limits_{i=1}^k a_i=m$.
Note that $x_2,\cdots, x_k$ spans $W$. Let $H=\sum\limits_{i\ge 2} b_i x_i$ be an ample divisor class on $Y$.
Since $0\neq x_2\in W_{\R}$, either $x_2\cdot H^{n-1}$ or $x_2^2\cdot H^{n-2}$ is non-zero (cf.~\cite[Lemma 2.3]{MZ}). In particular,  the intersection $x_2^{a_2} \cdots x_k^{a_k}\neq 0$ on $Y$ for some $a_2>0$.
So we may assume $x_2^{a_2} \cdots x_k^{a_k}=cF$ on $X$ for some general fibre $F$ of $\pi$ and non-zero complex number $c$.
Since $x_1$ is $\pi$-ample, $x_1^a\cdot F=(x_1|_F)^a\neq 0$ and hence $x_1^a\cdot x_2^{a_2} \cdots x_k^{a_k}\neq 0$.
So the claim is proved.

We next claim that $\deg f=\prod\limits_{i= 1}^k\lambda_i^{a_i}$ and $\deg g=\prod\limits_{i\ge 2}\lambda_i^{a_i}$.
Applying the projection formula for $g$ on $Y$, we have
$$(\deg g) (x_2^{a_2} \cdots x_k^{a_k})=g^*(x_2^{a_2} \cdots x_k^{a_k})=(\prod\limits_{i\ge 2}\lambda_i^{a_i})(x_2^{a_2} \cdots x_k^{a_k}).$$
Given non-negative integers $s_1,\cdots, s_k$ with $\sum\limits_{i=1}^k s_i=m$ and $s_1<a$, one has $\sum\limits_{i=1}^k s_i>n$ and hence $x_2^{s_2}\cdots x_k^{s_k}=0$. 
Applying the projection formula for $f$ on $X$, we have
$$(\deg f) (x_1^{a_1} \cdots x_k^{a_k})=f^*(x_1^{a_1} \cdots x_k^{a_k})=(\lambda_1x_1+x_2)^{a_1}\cdots (\lambda_kx_k)^{a_k}=(\prod_{i=1}^k\lambda_i^{a_i})(x_1^{a_1} \cdots x_k^{a_k}).$$

Now we have
\begin{align*}
(\prod_{i=1}^k\lambda_i^{a_i})&(x_1^{a_1+1}\cdot x_2^{a_2-1}\cdots x_k^{a_k})
=\deg f (x_1^{a_1+1}\cdot x_2^{a_2-1}\cdots x_k^{a_k})\\
&=(f^*x_1)^{a_1+1}\cdot (f^*x_2)^{a_2-1}\cdots (f^*x_k)^{a_k}\\
&=(\lambda_1x_1+x_2)^{a_1+1}\cdot (\lambda_2x_2)^{a_2-1}\prod_{i\ge 3}(\lambda_ix_i)^{a_k}\\
&=(\lambda_1^{a_1+1}\cdot\lambda_2^{a_2-1}\cdot \prod_{i\ge 3}\lambda_i^{a_i})(x_1^{a_1+1}\cdot x_2^{a_2-1}\cdots x_k^{a_k})+(a_1+1)(\prod_{i= 1}^k\lambda_i^{a_i})(x_1^{a_1}\cdots x_k^{a_k})\\
&=(\prod_{i=1}^k\lambda_i^{a_i})(x_1^{a_1+1}\cdot x_2^{a_2-1}\cdots x_k^{a_k})+(a_1+1)(\prod_{i=1}^k\lambda_i^{a_i})(x_1^{a_1}\cdots x_k^{a_k})
\end{align*}
since $\lambda_1=\lambda_2$.
So $x_1^{a_1}\cdots x_k^{a_k}=0$, a contradiction.
\end{proof}

With all the preparation work settled, we now prove our main theorems.
\begin{proof}[Proof of Theorem \ref{main-thm-mmp}]
 If $K_X$ is pseudo-effective, then (1) follows from Theorem \ref{thm-kx-pe-qa} and (3) is then trivial.
Next, we consider the case where $K_X$ is not pseudo-effective.

By \cite[Corollary 1.3.3]{BCHM}, since $K_X$ is not pseudo-effective, we may run MMP with scaling for a finitely many steps: $X=X_1\dashrightarrow\cdots\dashrightarrow X_j$ (divisorial contractions and flips) and end up with a Mori's fibre space $X_j\to X_{j+1}$.
Note that $X_{j+1}$ is again $\mathbb{Q}$-factorial (cf.~\cite[Corollary 3.18]{KM} with klt singularities (cf.~\cite[Corollary 4.5]{Fu}).
So by running the same program several times, we may get the following sequence:  $$(\ast)\,\,X=X_1\dashrightarrow \cdots \dashrightarrow X_i \dashrightarrow \cdots \dashrightarrow X_r=Y,$$ such that $K_{X_{r}}$ is pseudo-effective.
Replacing $f$ by a positive power, the sequence $(\ast)$ is $f$-equivariant by Theorem \ref{thm-equi-mmp}.
Since $K_{X_{r}}$ is pseudo-effective, $Y=X_r$ is $Q$-abelian by (1).

By Lemma \ref{mor-q-abelian}, the composition $X_i\dashrightarrow Y$ is a morphism for each $i$.
If $X_i\dashrightarrow X_{i+1}$ is a flip, then for the corresponding flipping contraction $X_i\to Z_i$, the pair $(Z_i,\Delta_i)$ is klt for some effective $\mathbb{Q}$-divisor $\Delta_i$ by \cite[Corollary 4.5]{Fu}.
Hence $Z_i\dashrightarrow Y$ is also a morphism by Lemma \ref{mor-q-abelian} again.
Together, the sequence $(\ast)$ is a relative MMP over $Y$.

By \cite[Lemma 2.16]{MZ} and Lemma \ref{fibres-rc+irr}, $X_i\to Y$ is equi-dimensional with every fibre being (irreducible) rationally connected.
Note that $K_{X_i}$ is not pseudo-effective for any $i<r$ by (1).
Then the final map $X_{r-1}\to X_r$ is a Fano contraction.
So (2) is proved.

We show (3) by induction on $i$ from $r$ to $1$. It is trivial when $i=r$.
Suppose $f_{i+1}^*|_{\N^1(X_{i+1})}$ is diagonalizable over $\mathbb{C}$.
Let $\pi:X_i\dasharrow X_{i+1}$ be the $i$-th step of the sequence ($\ast$).
If $\pi$ is a flip, then $\N^1(X_i)=\pi^*\N^1(X_{i+1})$ and hence $f_i^*|_{\N^1(X_i)}$ is diagonalizable over $\mathbb{C}$.
If $\pi$ is a divisorial contraction with $E$ being the $\pi$-exceptional prime divisor, then $f_i^*E=\lambda E$ for some integer $\lambda>1$ by Lemma \ref{lem-eig-int}. Note that $-E$ is $\pi$-ample by \cite[Lemma 2.62]{KM}. Its class $[E]\in \N^1(X_i)\backslash \pi^*\N^1(X_{i+1})$.
Note that $\pi^*\N^1(X_{i+1})$ is a $1$-codimensional subspace of $\N^1(X_i)$. 
Then $f_i^*|_{\N^1(X_i)}$ is diagonalizable over $\mathbb{C}$.
If $\pi:X_i\to X_{i+1}$ is a Fano contraction, then $f_i^*|_{\N^1(X_i)}$ is diagonalizable over $\mathbb{C}$ by Lemma \ref{lem-diag-fano}.
So (3) is proved.
\end{proof}

\begin{proof}[Proof of Theorem \ref{main-thm-rc-diag}]
We apply Theorem \ref{main-thm-mmp} and use the notation there.
Replacing $f$ by a positive power, there is an $f$-equivariant equi-dimensional morphism $\pi:X\to Y$ with all the fibre being irreducible such that $Y$ is $Q$-abelian.

We claim that $Y$ is a point. Suppose $\dim(Y)>0$. Then there is a quasi-\'etale cover $A\to Y$ of degree greater than $1$. Let $X':=X\times_Y A$.
Since $\pi$ is equi-dimensional and has irreducible fibres, then the induced cover $X'\to X$ is quasi-\'etale and hence \'etale of degree greater than $1$ by the purity of branch loci, a contradiction to $X$ being simply connected by \cite[Corollary 4.18]{De}.

Since $Y$ is a point, $f^*|_{\N^1(X)}$ is diagonalizable over $\mathbb{C}$ by Theorem \ref{main-thm-mmp}.
Let $\lambda$ be an eigenvalue of $f^*|_{\N^1(X)}$.
Then $\lambda$ is an eigenvalue of $f_i^*|_{\N^1(X_i)/\pi^*\N^1(X_{i+1})}$ for some $i$, where $\pi:X_i\to X_{i+1}$ is either a divisorial or Fano contraction.
By Lemma \ref{lem-eig-int}, $\lambda>1$ is an integer.
In particular, $f^*|_{\N^1(X)}$ is diagonalizable over $\mathbb{Q}$.
\end{proof}

\section{Some examples}\label{sec-exa}

Let $f:X\to X$ be an int-amplified endomorphism of a projective variety $X$. 
Then $f^*|_{\N^1(X)}$ may not be diagonalizable over $\mathbb{C}$.
\begin{example}[N. Fakhruddin]\label{exa-fak}
\rm{}
Let $X=E\times E$ where $E$ is an elliptic curve admitting a complex multiplication.
Then $\dim (\N^1(X))=4$.
Let $\sigma:X\to X$ be an automorphism via $(x,y)\mapsto (x,x+y)$.
Then $\sigma$ is of null-entropy and $\sigma^*|_{\N^1(X)}$ is not diagonalizable over $\mathbb{C}$.
Let $n_X$ be the multiplication endomorpphism of $X$.
Note that $n_X^*|_{\N^1(X)}=n^2\id_{\N^1(X)}$.
By Theorem \ref{main-thm-cri}, $f:=\sigma\circ n_S$ is int-amplified for $n> 1$.
Clearly, $f^*|_{\N^1(X)}$ is not diagonalizable over $\mathbb{C}$.
\end{example}

Let $f:X\to X$ be an amplified endomorphism of a projective variety $X$.
In general, there do not exist projective varieties $Y$ and $Z$, an int-amplified  endomorphim $g:Y\to Y$, an amplified automorphism  $h:Z\to Z$, and a dominant rational map $\pi:Y\times Z\dasharrow X$ such that $\pi\circ (g\times h)=f\circ\pi$.
\begin{example}\label{exa-amplified}
\rm
Let $X=E\times E$ where $E$ is an elliptic curve.
There is an action of $\SL_2(\mathbb{Z})$ on $X$ by automorphisms.
Take $M\in \SL_2(\mathbb{Z})$ such that some eigenvalue of $M$ is greater than $1$.
Let $f_1:X\to X$ an automorphism determined by $M$.
Then $f_1$ is of positive entropy and we may assume that the spectral radius of $f_1^*|_{\N^1(X)}$ is greater than $4$ after replacing $f_1$ by some positive power.
Let $f=2_X\circ f_1$ where $2_X:X\to X$ is the multiplication endomorphism of $X$.
Note that $(2_X)^*|_{\N^1(X)}=4 \id_{\N^1(X)}$.
So all the eigenvalues of $f^*|_{\N^1(X)}$ are of modulus not equal to $1$.
In particular, $f$ is amplified and not int-amplified by Theorem \ref{main-thm-cri}. 
Suppose the contrary that the above $g$ and $h$ exist. 
By Theorem \ref{thm-fak}, we may assume $g(y)=y$ and $h(z)=z$ for some $y\in Y$ and $z\in Z$ after replacing $g$ and $h$ by some positive power.
In particular, $(g\times h) ( \{y\}\times Z)=\{y\}\times Z$.
Clearly, $\{y\}\times Z$ does not dominate $X$ and $\{y\}\times Z$ is not contracted to a point in $X$ by taking a general $y$. So we may have a curve $C$ in $X$ such that $f(C)=C$ and $f|_C$ is an automorphism.
This is impossible since $f|_C$ is amplified and hence non-isomorphic. 
\end{example}
Let $f:X\to X$ be an int-amplified endomorphism of a projective variety $X$.
In general, there do not exist projective varieties $Y$ and $Z$, polarized endomorphims $g:Y\to Y$, $h:Z\to Z$, and a dominant rational map $\pi:Y\times Z\dasharrow X$ such that $\pi\circ (g\times h)=f\circ\pi$.

\begin{example}\label{exa-intamp}
\rm
Let $X=E\times E$ where $E$ is an elliptic curve admitting a complex multiplication.
Let $f:X\to X$ be an int-amplified endomorphism such that $f(a,b)=(na,na+nb)$ for some integer $n>1$ as constructed in Example \ref{exa-fak}.
Then all the eigenvalues of $f^*|_{\N^1(X)}$ are of modulus $n^2$.
Suppose the contrary that the above $g$ and $h$ exist. 
By a similar argument in Example \ref{exa-amplified}, we have two different curves $E_1$ and $E_2$ in $X$ such that $E_1\cap E_2\neq \emptyset$ and $f^s(E_1)=E_1$, $f^s(E_2)=E_2$ for some $s>0$.
Note that $f^s|_{E_1}$ and $f^s|_{E_2}$ are both amplified and hence polarized.
So $E_1$ and $E_2$ are elliptic curves.
We may assume that $f^s|_{E_1\cap E_2}=\id$.
By choosing an identity element in $E_1\cap E_2$, $E_1$ and $E_2$ can be regarded as subgroups of $X$ and we may assume $f^s$, $f^s|_{E_1}$ and $f^s|_{E_2}$ are isogenies.
Then we have $f^s$-equivariant fibrations $X\to X/E_1$ and $X\to X/E_2$.
So $(f^s)^*E_1\equiv n^{2s}E_1$ and $(f^s)^*E_2\equiv n^{2s}E_2$.
Since $E_1\cdot E_2>0$, $f^s|_{E_1}$ and $f^s|_{E_2}$ are both $n^{2s}$-polarized (cf.~\cite[Introduction]{CMZ}).
Let $\widetilde{f}:=f^s|_{E_1}\times f^s|_{E_2}$.
Then $\widetilde{f}$ is also an $n^{2s}$-polarized isogeny.
Let $\tau: E_1\times E_2\to X$ such that $\tau(a,b)=a+b$.
Then $\tau$ is an isogeny such that $f\circ \tau=\tau\circ \widetilde{f}$.
Therefore $f^s$ is $n^{2s}$-polarized (cf.~\cite[Lemma 3.10 and Theorem 3.11]{MZ}).
However, by Example \ref{exa-fak}, $(f^s)^*|_{\N^1(X)}$ is not diagonalizable over $\mathbb{C}$.
So we get a contradiction by \cite[Proposition 2.9]{MZ}.
\end{example}

We construct two polarized endomorphisms with the same set of preperiodic points such that their composition is not int-amplified and hence not polarized. 
\begin{example}\label{exa-comp}
\rm
Let $X=E\times E$ where $E$ is an elliptic curve admitting a complex multiplication.
Let $f:X\to X$ be a surjective endomorphism corresponding to the matrix $\begin{pmatrix} 1 & -5 \\ 1 & 1 \end{pmatrix}$, i.e., $f(a,b)=(a-5b, a+b)$.
Then $f^*|_{H^{1,0}(X)}$ is diagonalizable with two eigenvalues being of the same modulus $\sqrt{6}$.
Note that $f^*|_{H^{1,1}(X)}=f^*|_{H^{1,0}(X)}\wedge \overline{f^*|_{H^{1,0}(X)}}$ and $\N^1_{\mathbb{C}}(X)=H^{1,1}(X)$.
So $f^*|_{\N^1_{\mathbb{C}}(X)}$ is diagonalizable with four eigenvalues of the same modulus $6$.
Therefore, $f$ is polarized by \cite[Proposition 2.9]{MZ}.
Let $\sigma:X\to X$ be an automorphism corresponding to the matrix $\begin{pmatrix} 1 & -10 \\ 0 & 1 \end{pmatrix}$.
By the same argument, $g:=\sigma^{-1}\circ f\circ \sigma$ is polarized corresponding to the matrix $\begin{pmatrix} 11 & -105 \\ 1 & -9 \end{pmatrix}$.
Denote by $h:=f\circ g$.
Then $h$ corresponds to the matrix $\begin{pmatrix} 6 & -60 \\ 12 & -114 \end{pmatrix}$.
Note that this matrix has a real eigenvalue with modulus less than $1$.
So $h^*|_{\N^1_{\mathbb{C}}(X)}$ has an eigenvalue with modulus less than $1$.
Therefore, $h$ is not int-amplified by Theorem \ref{main-thm-cri}.
Finally, note that both $f$ and $g$ are polarized isogenies.
Then $\Prep(f)=\Prep(g)$ is the set of torsion points of $X$ by \cite[Proposition 2.5]{Kr}.
So the answer to Question \ref{que-yz} is negative.
\end{example}

\end{document}